\documentclass{amsart}
\usepackage{xypic}
\usepackage{graphicx, amsmath, amssymb, amsthm}
\usepackage{tikz-cd}
\usepackage{hyperref}

\newcommand{\mycases}[1]{\left\{\begin{array}{ll}#1\end{array}\right.}
\newcommand{\Z}{{\mathbb  Z}}

\newcommand{\R}{{\mathbb R}}
\newcommand{\F}{{\mathbb F}}
\newcommand{\MU}{MU}
\newcommand{\BP}{BP}
\newcommand{\BPG}{\BP^{(\!(G)\!)}}
\newcommand{\BPGm}{\BPG\langle m\rangle}
\newcommand{\MUR}{\MU_{\R}}
\newcommand{\MUG}{\MU^{(\!(G)\!)}}
\newcommand{\BPR}{BP_{\R}}
\newcommand{\BUR}{BU_{\R}}
\newcommand{\Xin}{\Xi_{n}}

\DeclareMathOperator{\Ind}{Ind}
\DeclareMathOperator{\Res}{Res}
\DeclareMathOperator{\ind}{Ind}

\DeclareMathOperator{\Ext}{Ext}
\DeclareMathOperator{\Tor}{Tor}

\DeclareMathOperator{\Map}{Map}
\DeclareMathOperator{\Sym}{Sym}

\DeclareMathOperator{\Stab}{Stab}

\DeclareMathOperator{\RO}{RO}

\newcommand{\br}{\bar{r}}

\newcommand{\m}[1]{{\protect\underline{#1}}}
\newcommand{\mZ}{\m{\Z}}

\newcommand{\mR}{\m{R}}

\newcommand{\mF}{\m{\F}}
\newcommand{\mpi}{\m{\pi}}

\newcommand{\mA}{\m{A}}

\newcommand{\mstar}{\m{\star}}
\newcommand{\mRO}{\m{\RO}}

\mathchardef\mhyphen=45

\newcommand{\EM}{Eilenberg-Mac~Lane}
\newcommand{\smashover}[1]{\underset{#1}{\otimes}}
\newcommand{\otimesover}[1]{\underset{#1}{\otimes}}
\newcommand{\Boxover}[1]{\underset{#1}{\Box}}

\numberwithin{equation}{section}

\newtheorem{theorem}{Theorem}[section]
\newtheorem{lemma}[theorem]{Lemma}
\newtheorem{corollary}[theorem]{Corollary}

\newtheorem{proposition}[theorem]{Proposition}
\newtheorem{conjecture}[theorem]{Conjecture}
\newtheorem*{theorem*}{Theorem}
\newtheorem*{proposition*}{Proposition}

\theoremstyle{remark}
\newtheorem{remark}[theorem]{Remark}

\newtheorem{notation}[theorem]{Notation}

\theoremstyle{definition}

\newtheorem{definition}[theorem]{Definition}

\title[Hyperreal Steenrod]{On the hyperreal dual Steenrod algebra}

\author{Michael A.~Hill}
\address{University of Minnesota, Minneapolis, MN 55455}
\email{\tt{mahill@umn.edu}}

\author{Michael J.~Hopkins}
\address{Department of Mathematics, Harvard University, Cambridge, MA 02138}
\email{{\tt{mjh@math.harvard.edu}}}

\begin{document}

\begin{abstract}
We compute the dual Steenrod algebra for Bredon homology with constant coefficients \(\underline{\mathbb Z}\) and \(\underline{\mathbb Z}/2\) in the category of modules over \(\MU^{((G))}\), the norm to \(G=C_{2^n}\) of \(\MU_{\mathbb R}\). Using this and an equivariant version of the Greenlees--Serre spectral sequence, we give a spectral sequence computing the \(RO\)-graded homotopy of the Eilenberg--Mac Lane spectrum \(H\underline{\mathbb F}_2\otimes H\underline{\mathbb Z}\).
\end{abstract}

\keywords{equivariant homotopy, Steenrod algebra}
\maketitle

\section{Introduction}

The Reduction Theorem of \cite{HHR} was a key ingredient in determining the slices of the \(G=C_{2^n}\)-spectrum
\[
    \Xin =\MUG=N_{C_{2}}^{C_{2^{n}}}\MUR.
\]
It did so by giving a kind of presentation of \(H\mZ\), the {\EM} spectrum for the constant Mackey functor \(\mZ\), as a module over \(\Xin\), as we recall in Section~\ref{sec:TwistedMonoidRings}. This actually gives us a formula for the smash product of an arbitrary \(G\)-spectrum with \(H\mZ\).

\begin{theorem}
For any \(G\)-spectrum \(E\), there are natural weak equivalences
\[
H\mZ\otimes E\simeq \Xin\smashover{A} E,
\]
where 
\[
A=\mathbb S^0[G\cdot \bar{r}_1,\dots]:=\bigotimes_{i\geq 1}N_{C_2}^{G}\mathbb S^0[S^{i\rho_2}]
\]
is the twisted monoid algebra specified by a choice of generators of \(\pi_{\ast\rho_2}\Xin\) and acts on \(E\) via the augmentation to \(\mathbb S^0\).
\end{theorem}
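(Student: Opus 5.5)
The plan is to obtain this formula from the Reduction Theorem of \cite{HHR}, rewritten as a statement about modules over the twisted monoid ring \(A\). First, fix for each \(i\geq 1\) a class \(\bar r_i\in\pi_{i\rho_2}^{C_2}\Xin\). By the universal property of the norm and of free associative algebras, the map \(S^{i\rho_2}\to\Res^G_{C_2}\Xin\) extends to an associative algebra map \(\mathbb S^0[S^{i\rho_2}]\to\Res^G_{C_2}\Xin\). Norming it up and using the commutative multiplication of \(\Xin\) gives an associative algebra map \(N_{C_2}^G\mathbb S^0[S^{i\rho_2}]\to\Xin\). Smashing over \(i\) then gives \(A\to\Xin\), which makes \(\Xin\) a right \(A\)-module. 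The augmentation \(A\to\mathbb S^0\), sending every \(S^{i\rho_2}\) to the basepoint, makes any \(E\) a left \(A\)-module.

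The key input is the Reduction Theorem in its module form:
\[
\Xin\smashover{A}\mathbb S^0\simeq H\mZ .
\]
Here the left side is the quotient of \(\Xin\) by all the norms of the \(\bar r_i\) and their conjugates. In \cite{HHR} this quotient is built iteratively as \(\Xin/(G\cdot\bar r_1,\dots)\). The one thing to check is that this iterated quotient agrees with the relative smash product over the twisted monoid ring. I would do this by resolving \(\mathbb S^0\) as an \(A\)-module: \(A\) is an (infinite) smash product of the norms of \(\mathbb S^0[S^{i\rho_2}]\). For each factor, the augmentation is the cofiber of the free module map induced by \(S^{i\rho_2}\). Because norms distribute over the relevant geometric realizations and \(\Ind\)/\(N\) compatibilities hold, \(\Xin\smashover{A}\mathbb S^0\) is exactly the "method of twisted monoid rings" quotient of \cite{HHR}. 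Passing to the colimit over finitely many generators handles the infinite product.

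Given this, the theorem follows formally from associativity of relative smash products. Since \(A\) acts on \(E\) through \(\mathbb S^0\), we have \(E\simeq\mathbb S^0\otimes E\) as left \(A\)-modules, where \(A\) acts on the first factor. Hence
\[
\Xin\smashover{A}E\simeq \Xin\smashover{A}(\mathbb S^0\otimes E)\simeq(\Xin\smashover{A}\mathbb S^0)\otimes E\simeq H\mZ\otimes E .
\]
The middle equivalence holds because the bar construction \(B(\Xin,A,\mathbb S^0\otimes E)\) is levelwise \(B(\Xin,A,\mathbb S^0)\otimes E\), and \(-\otimes E\) commutes with geometric realization. Naturality in \(E\) is clear from this construction.

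The main obstacle is the identification of the Reduction Theorem's quotient with the derived relative smash product over \(A\). This requires that \(A\) is an associative ring in a model where norms are homotopically well behaved, so that the iterated cofiber construction is genuinely a derived tensor product. I would handle this with the cofibrancy and flatness results for norms of cofibrant commutative or associative rings. Alternatively, I would compare both sides on geometric fixed points and underlying homotopy using the slice-theoretic description of \(\Xin\smashover{A}\mathbb S^0\) from \cite{HHR}.
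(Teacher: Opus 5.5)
Your proposal is correct and is essentially the paper's proof. The paper quotes the Reduction Theorem directly in the module form \(\Xin\smashover{A}\mathbb S^0\simeq H\mZ\) and then applies the same shuffle equivalence \((\Xin\smashover{A}\mathbb S^0)\otimes E\simeq\Xin\smashover{A}E\); the identification you flag as the main obstacle is simply taken as part of the cited input, since \cite{HHR} builds these quotients as relative smash products over twisted monoid rings.
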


This formula is more useful when \(E\) is a structured ring spectrum, and it is most helpful when \(E\) in in fact \(\Xin\)-oriented. If \(E\) is an \(E_\infty\) ring spectrum, then we have a natural equivalence of \(G\)-spectra
\[
    \Xin\otimesover{A} E\simeq (E\otimes\Xin)\otimesover{E\otimes A}E.
\]
When \(E\) is \(\Xin\)-orientable, then we can recast the algebra map here. The homotopy of \(E\otimes A\) is free in an \(\mRO\)-graded sense, and the same is true for \(E\otimes\Xin\), via the Thom isomorphism (See Corollary~\ref{cor:Homotopy of EXin}). The map on homotopy from \(E\otimes A\) to \(E\otimes\Xin\) is then a kind of \(E\)-based Hurewicz image.

When \(E=H\mZ\) or \(H\mF_2\), then we can further recast this purely in equivariant algebra. The \(\mRO\)-graded homotopy of \(H\mZ\otimes A\) is the free graded Tambara \(\pi_{\mstar}H\mZ\)-algebra on \(C_2\)-equivariant classes \(\br_1,\br_2,\dots\). The classes \(\br_i\) are homotopy elements in \(\pi_{\ast\rho_2}^{C_2}(\Xin)\), and so we are considering their Hurewicz images in 
\[
    \pi_{\ast\rho_2}^{C_2} \big(H\mZ\otimes \Xin\big)\cong \Z[\bar{m}_1,\gamma\bar{m}_1,\dots].
\]
This is essentially the way we understand the classes \(\bar{r}_i\). In practice, we find that we have to compute an \(\mRO\)-graded \(\Tor\) group for a complicated map of equivariant algebras. Unpacking this, we will show the following.
\begin{theorem}
    We have an isomorphism of \(\pi_{\mstar}H\mF_2\)-modules
    \[
        {H\mF_2}_{\mstar}H\mZ\cong \Tor^{-\mstar}_{H\mF_{2\mstar}[G\cdot\br_1,\dots]}\big(H\mF_{2\mstar}[G\cdot\bar{m}_1,\dots],H\mF_{2\mstar}\big).
    \]
\end{theorem}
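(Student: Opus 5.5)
Take \(E=H\mF_2\) in the first theorem. Since \(H\mF_2\) is an \(E_\infty\) ring, the equivalence \(\Xin\otimesover{A} E\simeq (E\otimes\Xin)\otimesover{E\otimes A}E\) gives
\[
H\mF_2\otimes H\mZ\simeq (H\mF_2\otimes\Xin)\otimesover{H\mF_2\otimes A}H\mF_2 .
\]
Here both \(H\mF_2\otimes\Xin\) and \(H\mF_2\otimes A\) are commutative \(H\mF_2\)-algebras, and \(H\mF_2\) is an algebra over the second via the augmentation \(A\to\mathbb S^0\). I will compute the homotopy of this relative tensor product with the \(\mRO\)-graded Künneth (Tor) spectral sequence for modules over the commutative ring \(H\mF_2\otimes A\):
\[
E_2=\Tor^{-\mstar}_{\pi_{\mstar}(H\mF_2\otimes A)}\big(\pi_{\mstar}(H\mF_2\otimes\Xin),\,H\mF_{2\mstar}\big)\Rightarrow {H\mF_2}_{\mstar}H\mZ .
\]
Torsion-freeness is not an issue, because we are working over a field-like coefficient ring.

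The first step is to identify the inputs. The group \(\pi_{\mstar}(H\mF_2\otimes A)\) is the free graded Tambara algebra \(H\mF_{2\mstar}[G\cdot\br_1,\dots]\). This follows since \(A\) is a smash product of norms of free monoid rings on \(S^{i\rho_2}\), and \(H\mF_2\otimes N_{C_2}^G X\) is computed by the norm/induction formulas for free modules. The group \(\pi_{\mstar}(H\mF_2\otimes\Xin)\) is \(H\mF_{2\mstar}[G\cdot\bar m_1,\dots]\) by the Thom isomorphism (Corollary~\ref{cor:Homotopy of EXin}). The map between them sends \(\br_i\) to its Hurewicz image, expressed via the \(\bar m_j\) and their conjugates.

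The second step is to show that the Tor spectral sequence collapses at \(E_2\) and that there are no extension problems for \(\pi_{\mstar}H\mF_2\)-modules. For collapse I would resolve \(H\mF_2\) over \(H\mF_2\otimes A\) geometrically. Each factor \(N_{C_2}^G\mathbb S^0[S^{i\rho_2}]\) has augmentation fitting in a cofiber sequence built from \(G_+\otimes_{C_2}S^{i\rho_2}\)-cells. Tensoring these together gives a Koszul-type cellular resolution whose cells are induced representation spheres. The spectral sequence then becomes the filtration spectral sequence of \((H\mF_2\otimes\Xin)\otimesover{H\mF_2\otimes A}(\text{Koszul complex})\), whose \(E_1\)-page is free over \(H\mF_{2\mstar}[G\cdot\bar m_1,\dots]\). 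The differentials are determined by multiplication by the \(\br_i\).

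The main obstacle is that Mackey-functor-valued homotopy of free \(H\mF_2\)-modules on induced cells is not free over \(\pi_{\mstar}H\mF_2\) in the naive sense. Consequently, both the Koszul identification and the absence of higher differentials need care. I would first try to reduce to the case where the images of the \(\br_i\) form a regular sequence, up to decomposables, in \(H\mF_{2\mstar}[G\cdot\bar m_1,\dots]\). If they do, Tor is concentrated in a single line and the spectral sequence collapses. Failing that, I would argue that all differentials vanish by multiplicativity: the spectral sequence is one of algebras, and its \(E_2\)-page is generated by elements in filtrations \(0\) and \(1\). Those generators are permanent cycles, since they are represented by explicit cells and the module structure. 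Once the spectral sequence collapses, the abutment as a \(\pi_{\mstar}H\mF_2\)-module is isomorphic to \(E_2\), because the filtration quotients are free, so extensions split. This yields the stated isomorphism.
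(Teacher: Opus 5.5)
Your first step is the paper's: the shuffle equivalence \(H\mF_2\otimes H\mZ\simeq (H\mF_2\otimes\Xin)\otimesover{H\mF_2\otimes A}H\mF_2\), the K\"unneth spectral sequence, and the identification of its inputs via freeness of \(H\mF_2\otimes A\) and the Thom isomorphism. The gap is the second step, which is the real content of the theorem. You never establish collapse, and each route you propose fails.

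\emph{The regular-sequence branch is vacuous.} The Hurewicz images are never regular. For \(G=C_2\) the classes \(\br_{2^k-1}\) map to zero, and their homology suspensions are the exterior classes \(\tau_k\). For every \(n\), already \(\br_1\) maps to zero: in degree \(\rho_2\) the target restricts injectively to underlying homotopy, where \(\br_1\) is a combination of copies of \([\mathbb{C}P^1]\).

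\emph{The cellular Koszul resolution fails for \(n\geq 2\).} The augmentation ideal of \(N_{C_2}^{G}\mathbb S^0[S^{i\rho_2}]\) is not built from induced cells \(G_+\otimesover{C_2}S^{i\rho_2}\) alone. At level \(G/G\) it contains \(N_{C_2}^{G}\br_i\), which is not a transfer because it survives to geometric fixed points. So your complex is not a resolution, and its homology is not the \(\Tor\).

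\emph{The multiplicative branch is unsupported.} \(H\mF_2\otimes A\) is not commutative, since \(A\) is a tensor product of norms of free associative algebras. So the spectral sequence is not automatically multiplicative. Even with something like the Hahn--Wilson \(E_{\rho}\) refinement, it is at best a spectral sequence of Green functors, without norms. Your claim that \(E_2\) is generated in filtrations \(0\) and \(1\) is then exactly what needs proof. For \(n\geq 2\) it concerns \(\Tor\) over a free graded Tambara algebra in \(\mRO\)-graded Mackey functors, which you have not computed. Nothing ensures that the \(G/G\)-level classes coming from the stabilizer-\(G\) cells of \(\Map^{C_2}(G,S^{i\rho_2+1})\) are products and transfers of low-filtration classes rather than norms.

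\emph{The extension step fails too.} \(\pi_{\mstar}H\mF_2\) is not field-like; \(\mathbb M_2\) already has \(a_\sigma\)-torsion. You also have no reason to expect the \(\Tor\) to be free over it, so the filtration quotients need not split.

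The missing idea is the paper's, which avoids computing the \(\Tor\) at all. For \(E=H\mF_2\) the whole K\"unneth computation happens in \(H\mF_2\)-modules. By Schwede--Shipley this is the derived category of \(\mF_2\)-modules, and the Eilenberg--Mac~Lane functor is (derived) strong symmetric monoidal. Since \(H\mF_2\otimes A\) and \(H\mF_2\otimes\Xin\) are free, the relative smash product is computed algebraically as a derived tensor product. Hence the K\"unneth spectral sequence collapses and the answer is the \(\Tor\) itself, uniformly in \(n\), with no knowledge of what that \(\Tor\) is. Your second step should be replaced by this structural argument rather than any analysis of the Hurewicz images.
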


To simplify the computation, we introduce an intermediate object, the \(\Xin\)-relative dual Steenrod algebra. Since \(\Xin \) is a \(G\)-commutative ring spectrum, there is a \(G\)-symmetric monoidal category of modules \cite{BHModules}. Given any commutative \(\Xin \)-algebras \(E\) and \(E'\), we can form the commutative \(\Xin \)-ring spectrum
\[
E\smashover{\Xin} E'.
\]
When \(E=E'\), then this plays the role of the ring of co\"operations for the spectral Hopf algebroid recording descent from \(E\)-modules to \(\Xin\)-modules. In particular, assuming nice flatness properties, we have a simpler Adams spectral sequence computing the homotopy of a \(\Xin\)-module, a la Baker--Lazarev \cite{BakerLazarev}. 

Working over \(\Xin\) makes many computations much simpler. We can give a complete description of the integral and mod \(2\) \(\Xin\)-relative dual Steenrod algebras. We work out the additive story in Section~\ref{sec:Additive}, proving the following results.

\begin{theorem}\label{thm:IntegralAdditiveRelativeSteenrod}
For any \(\mZ\)-algebra \(\mR\), we have a weak equivalence of \(H\mR\)-modules
\[
H\mR\smashover{\Xin }H\mZ\simeq H\mR\otimes \Sigma^\infty_+\left(\prod_{i\geq 1}\Map^{C_{2}}\big(G,S^{i\rho_{2}+1}\big)\right).
\]
\end{theorem}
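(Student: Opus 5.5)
We start from the Reduction Theorem in the form of the first theorem above: \(H\mZ\simeq \Xin\smashover{A}\mathbb S^0\), where \(A=\bigotimes_{i\geq 1}N_{C_2}^G\mathbb S^0[S^{i\rho_2}]\) maps to \(\Xin\) via the chosen generators \(\br_i\), and \(A\) acts on \(\mathbb S^0\) through the augmentation. Base change along \(\Xin\) then gives
\[
H\mR\smashover{\Xin}H\mZ\simeq H\mR\smashover{\Xin}\big(\Xin\smashover{A}\mathbb S^0\big)\simeq H\mR\smashover{A}\mathbb S^0 .
\]
In this expression \(A\) acts on \(H\mR\) through the composite \(A\to\Xin\to H\mZ\to H\mR\). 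The first key step is to show that this composite is homotopic, as a map of associative algebras (or at least of \(A\)-module structures on \(H\mR\)), to the augmentation \(A\to\mathbb S^0\to H\mR\). The reason is that the \(\br_i\) are positive-dimensional classes in \(\pi^{C_2}_{i\rho_2}\), and \(H\mZ\) kills them: \(\pi^{C_2}_{i\rho_2}H\mZ=0\) for \(i\geq1\), because \(H^{C_2}\)-homology of \(S^{i\rho_2}\) has no class in that degree. Since \(A\) is a free twisted monoid algebra (an indexed smash product of norms of free \(E_1\)-algebras on \(S^{i\rho_2}\)), a map out of \(A\) into a commutative ring is determined by the restrictions of the \(C_2\)-equivariant generating maps \(S^{i\rho_2}\to H\mR\). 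Each of these is null, so the composite is homotopic to the augmentation.

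After that we have \(H\mR\smashover{A}\mathbb S^0\simeq H\mR\otimes(\mathbb S^0\smashover{A}\mathbb S^0)\), so the problem reduces to computing the two-sided bar construction \(\mathbb S^0\smashover{A}\mathbb S^0\). The tensor product and the norm both commute with bar constructions, so
\[
\mathbb S^0\smashover{A}\mathbb S^0\simeq \bigotimes_{i\ge1} N_{C_2}^G\big(\mathbb S^0\smashover{\mathbb S^0[S^{i\rho_2}]}\mathbb S^0\big).
\]
For the free associative algebra on \(S^{V}\), with \(V=i\rho_2\), the bar construction is \(\Sigma^\infty_+\) of the classifying space of the free monoid \(J(S^V)\simeq\Omega S^{V+1}\). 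This gives \(\Sigma^\infty_+ B\Omega S^{V+1}\simeq \Sigma^\infty_+ S^{V+1}\), using that \(S^{V+1}\) is \(C_2\)-connected, since all fixed points are connected. The remaining step is to identify norms with coinduced spaces, \(N_{C_2}^G\Sigma^\infty_+X\simeq\Sigma^\infty_+\Map^{C_2}(G,X)\), and to use that the (restricted) infinite smash product of suspension spectra of based spaces is the suspension spectrum of the weak infinite product. Together these give \(\Sigma^\infty_+\prod_i\Map^{C_2}(G,S^{i\rho_2+1})\).

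I expect the first step to be the main obstacle: justifying that the \(A\)-module structure on \(H\mR\) is trivial in a sufficiently structured sense. A mere homotopy-level nullity of generators is insufficient unless we use the freeness of \(A\) as an associative algebra together with the norm adjunction. One must therefore check that \(A\) is free on the \(G\)-indexed wedge \(\bigvee_i G_+\wedge_{C_2}S^{i\rho_2}\) in \(E_1\)-algebras (twisted monoid rings as in the cited construction), so that maps to \(H\mR\) are classified by \(\pi^{C_2}_{i\rho_2}H\mR=0\). If the \(E_1\) freeness is awkward, the fallback is to filter by word length and argue via the Künneth/Tor spectral sequence that everything is concentrated on the expected free module. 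The additive equivalence of \(H\mR\)-modules asked for in Theorem~\ref{thm:IntegralAdditiveRelativeSteenrod} is weaker and would follow from that.
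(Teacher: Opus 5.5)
Your outline matches the paper's: change of rings to \(H\mR\smashover{A}\mathbb S^0\), trivialize the \(A\)-action, then compute \(\mathbb S^0\smashover{A}\mathbb S^0\) from bar constructions on free associative algebras and the identification of norms with coinduced spaces. Doing the bar construction factorwise before norming is equivalent to the paper's use of \(A\simeq\Sigma^\infty_+\Omega\prod_i\Map^{C_2}(G,S^{i\rho_2+1})\).

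The gap is in the justification of the first step, which is where you expected trouble. \(A\) is not a free \(E_1\)-algebra on \(\bigvee_i G_+\otimesover{C_2}S^{i\rho_2}\). It is an indexed tensor product of free \(E_1\)-algebras, not their coproduct in \(E_1\)-algebras; already for \(G=C_2\) the underlying homology of \(\mathbb S^0[S^{\rho_2}]\otimes\mathbb S^0[S^{2\rho_2}]\) is \(\Z[x,y]\), not \(\Z\langle x,y\rangle\). So the check you propose fails. Nor is an \(E_1\)-map from such a tensor product into a commutative ring determined by the generators. \(E_1\)-maps \(\mathbb S^0[\mathbb S^0]\otimes\mathbb S^0[\mathbb S^0]\to R\) sending both generators to units are pointed maps \(T^2\to BGL_1R\), and the top cell contributes a commuting homotopy in \(\pi_1R\). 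For \(R=\mathbb S\), the class \(\eta\) gives an \(E_1\)-map whose restriction to each factor is the augmentation but which is not the augmentation. The resulting bar construction is the Thom spectrum of the corresponding map \(T^2\to BGL_1\mathbb S\), not \(\Sigma^\infty_+T^2\). So the vanishing of \(\pi^{C_2}_{i\rho_2}H\mR\) does not by itself trivialize the module structure. The word-length/K\"unneth fallback would at best recover homotopy Mackey functors, not an equivalence of \(H\mR\)-modules.

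What you need is a truncation argument that disposes of all the coherence data at once. The paper's version: \(H\mZ\) is a zero slice and every summand of \(A\) other than \(\mathbb S^0\) is slice positive, so \(A\to H\mZ\) factors through \(P^0(A)\simeq P^0(\mathbb S^0)\), i.e.\ through the augmentation. The \(H\mR\) case then follows by applying \(H\mR\smashover{H\mZ}(-)\); this step matters because \(H\mR\) need not be a zero slice.

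Alternatively, use the homotopy \(t\)-structure. Each positive summand of \(A\) is \(G_+\otimesover{H}S^{W}\) with \(W\) a nonzero multiple of \(\rho_H\), hence \(0\)-connected. So \(\tau_{\le 0}A\simeq H\mA\) as \(E_1\)-rings, and the space of \(E_1\)-maps \(A\to H\mZ\) is equivalent to the set of Green functor maps \(\mA\to\mZ\), which is a point. The \(A\)-action on \(H\mR\) is restricted along this map, so it is the trivial one. With either replacement, the rest of your argument goes through.
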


This completely describes the additive structure. Since products and coinduction split after applying the infinite complete suspension, the right-hand side is \(H\mR\)-free, splitting as a wedge of \(H\mR\)-modules of the form
\[
H\mR\otimes (G_+\smashover{H} S^{V})
\]
for various subgroups \(H\) and \(H\)-representations \(V\). We observe here that we made no assumptions about \(\mR\); it can be as pathological as desired.

Specializing to the case \(\mR=\mF_2\), we can also understand the relative smash square of \(H\mF_2\).

\begin{theorem}\label{thm:modTwoAdditiveRelativeSteenrod}
We have a weak equivalence
\[
H\m{\F}_{2}\smashover{\Xin} H\m{\F}_{2}\simeq 
H\mF_2\otimes\Sigma^{\infty}_+\left(S^1\times\prod_{i\in\mathbb N}\Map^{C_{2}}\big(G,S^{i\rho_{2}+1}\big)\right).
\]
\end{theorem}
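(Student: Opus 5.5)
Write \(A=\mathbb S^0[G\cdot\br_1,\dots]\). The plan is to factor \(H\mF_2\) over \(H\mZ\) and reduce to Theorem~\ref{thm:IntegralAdditiveRelativeSteenrod} with \(\mR=\mF_2\). Since \(H\mF_2\) is a commutative \(H\mZ\)-algebra and \(H\mZ\) is a commutative \(\Xin\)-algebra, there is an equivalence
\[
H\mF_2\smashover{\Xin}H\mF_2\simeq \big(H\mF_2\smashover{\Xin}H\mZ\big)\smashover{H\mZ}H\mF_2 .
\]
Theorem~\ref{thm:IntegralAdditiveRelativeSteenrod} gives
\[
H\mF_2\smashover{\Xin}H\mZ\simeq H\mF_2\otimes\Sigma^\infty_+X,\qquad X=\prod_{i\ge1}\Map^{C_2}(G,S^{i\rho_2+1}),
\]
as \(H\mF_2\)-modules. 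The first task is to upgrade this to an equivalence of \(H\mF_2\)-\(H\mZ\)-bimodules, or at least to check that the right \(H\mZ\)-action factors compatibly. I would do this by recalling the proof of Theorem~\ref{thm:IntegralAdditiveRelativeSteenrod}: there \(H\mZ\simeq\Xin\otimes_A\mathbb S^0\), so \(H\mR\smashover{\Xin}H\mZ\simeq H\mR\otimes_A\mathbb S^0\). Here \(A\) acts on \(H\mR\) through the trivial Hurewicz map, since the \(\br_i\) map to zero in \(\pi_\star H\mR\). Then \(\mathbb S^0\otimes_A\mathbb S^0\) is the bar construction, which is \(\Sigma^\infty_+\) of the delooping, namely the product of the \(\Map^{C_2}(G,S^{i\rho_2+1})\).

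Running the same reduction for \(\mF_2\) gives
\[
H\mF_2\smashover{\Xin}H\mF_2\simeq (H\mF_2\otimes_A\mathbb S^0)\smashover{H\mZ}H\mF_2 .
\]
The \(A\)-action factors through \(H\mZ\otimes A\to H\mZ\), so this rearranges to
\[
\big(H\mF_2\smashover{H\mZ}H\mF_2\big)\otimes_A\mathbb S^0\simeq \big(H\mF_2\smashover{H\mZ}H\mF_2\big)\otimes \Sigma^\infty_+X .
\]
This is legitimate because the augmentation \(A\to\mathbb S^0\) splits, so \(A\) acts trivially on everything in sight.

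It remains to identify \(H\mF_2\smashover{H\mZ}H\mF_2\simeq H\mF_2\otimes\Sigma^\infty_+S^1=H\mF_2\vee\Sigma H\mF_2\) as left \(H\mF_2\)-modules. Write \(H\mF_2=H\mZ/2\) as the cofiber of multiplication by \(2\) on \(H\mZ\). Then \(H\mF_2\smashover{H\mZ}H\mF_2\) is the cofiber of \(2\colon H\mF_2\to H\mF_2\), which is null as a map of \(H\mF_2\)-modules. Hence it splits as \(H\mF_2\vee\Sigma H\mF_2\simeq H\mF_2\otimes S^1_+\). Combining this with the previous step and \(\Sigma^\infty_+(S^1\times X)\simeq\Sigma^\infty_+S^1\otimes\Sigma^\infty_+X\) gives the stated equivalence.

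The main obstacle I expect is the middle step: justifying the rearrangement of the tensor products, and that the trivial \(A\)-module structures are compatible on both sides. I would handle it by working entirely with the bar construction \(B(H\mF_2,A,\mathbb S^0)\) and noting that every \(A\)-action involved is pulled back along the augmentation. Both sides then become \(B(\mathbb S^0,A,\mathbb S^0)\) tensored with the relevant module, and the identification of \(B(\mathbb S^0,A,\mathbb S^0)\) is already contained in the proof of Theorem~\ref{thm:IntegralAdditiveRelativeSteenrod}.
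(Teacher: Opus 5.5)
Your argument is correct and is essentially the paper's: base change the left factor to get \(H\mF_2\otimesover{\Xin}H\mZ\simeq H\mF_2\otimes\Sigma^\infty_+X\), then reduce the right factor mod \(2\). The paper phrases this as smashing with the mod \(2\) Moore spectrum \(\mathbb S^0/2\), and your \(-\otimesover{H\mZ}H\mF_2\) is the same operation because \(H\mF_2\simeq H\mZ\otimes\mathbb S^0/2\) as \(H\mZ\)-modules. That observation also removes the bimodule ``obstacle'' you flag: \(M\otimesover{H\mZ}H\mF_2\simeq M\otimes\mathbb S^0/2\) for any right \(H\mZ\)-module \(M\), whatever the action, so your rearrangement through \(A\) is unnecessary — and as written it tacitly replaces the right \(H\mZ\)-action by the one coming from the left factor.
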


In Section~\ref{sec:Multiplicative}, we study the multiplicative structure. If \(\mR\) is a commutative Green \(\mZ\)-algebra, then
\[
H\mR\smashover{\Xin} H\mZ
\]
is an \(E_\infty\)-monoid in \(H\mZ\)-modules. If \(\mR\) is in fact a Tambara \(\mZ\)-algebra, then the relative tensor product is a \(G\)-commutative \(H\mZ\)-algebra, and so it has natural norm maps on its homotopy. Surprisingly, this is the easiest case, since our approach and splitting preserve certain kinds of norm structure automatically. We can read these out of our computation and then use properties of \(G\)-spectra to resolve other multiplicative extensions.

Finally, we return to the global case. In Section~\ref{sec:AbsoluteCase}, we produce an equivariant refinement of the Greenlees--Serre spectral sequence \cite{GreenleesSS}. Given a pushout square of commutative ring spectra
\[
\begin{tikzcd}
    {A}
        \ar[r]
        \ar[d]
        &
    {R}
        \ar[d]
        \\
    {B}
        \ar[r]
        &
    {Q,}
\end{tikzcd}
\]
the Greenlees--Serre spectral sequence computes the homotopy groups of \(R\) (the ``total space'') out of the homotopy groups of \(A\) (the ``fiber''), \(B\), and \(Q\) (the ``base''). Greenlees' version is built out of the Postnikov tower for \(A\) in the case \(B\) is the zeroth Postnikov section. Ours is built out of the slice tower for \(A\) in the case \(B\) is the zeroth slice section.

\begin{theorem}
    Let \(f\colon A\to R\) be a map of connective equivariant commutative ring spectra, let \(P^0\) be the zero slice section functor, and consider the pushout square of equivariant commutative ring spectra
    \[
    \begin{tikzcd}
        {A}
            \ar[r, "f"]
            \ar[d]
            &
        {R}
            \ar[d]
            \\
        {P^0A}
            \ar[r]
            &
        {Q.}
    \end{tikzcd}
    \]
    We have a spectral sequence of graded Tambara functors with \(E_2\)-term
    \[
    E_2^{s,\mstar}=\pi_{\mstar-s}\big(Q\smashover{P^0A} P_{\dim\mstar}^{\dim\mstar}(A)\big),
    \]
    where \(P_{\dim\mstar}^{\dim\mstar}(A)\) is the \(\dim\mstar\)-slice \(A\). The differentials are Adams type. This converges strongly to the homotopy groups of \(R\).
\end{theorem}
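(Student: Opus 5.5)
The plan is to imitate Greenlees' construction, with the Postnikov tower replaced by the slice tower. Since \(A\) is a connective equivariant commutative ring spectrum, its slice tower
\[
\cdots\to P^{n}A\to P^{n-1}A\to\cdots\to P^0A
\]
is a tower of \(A\)-modules. Its limit is \(A\), and its layers are the slices \(P^n_nA\). Using the multiplicativity of the slice filtration (the product of an \(n\)-slice-connective and an \(m\)-slice-connective spectrum is \((n+m)\)-slice-connective, at least in the connective range), the slice-connective covers \(P_nA\) form a filtered commutative algebra in \(A\)-modules. The associated graded is \(\bigoplus_n P^n_nA\), and it is a commutative \(P^0A\)-algebra.

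\textbf{Step 1: base change.} Apply \(R\otimes_A(-)\) to the tower \(P_nA\) of slice covers. This gives a decreasing filtration of \(R=R\otimes_AA\) with layers
\[
R\otimes_A P^n_nA\simeq (R\otimes_A P^0A)\otimes_{P^0A}P^n_nA=Q\otimes_{P^0A}P^n_nA.
\]
The identification uses that each slice \(P^n_nA\) is a \(P^0A\)-module, which holds because \(P_1A\) acts trivially on it. This step is the main subtlety. I would first verify it via the multiplicativity of slice covers: \(P^n_nA\) is a module over the associated graded ring, whose degree-zero part is \(P^0A\). This requires a structured (filtered \(E_\infty\), or at least \(E_1\)) version of the slice tower, which is where I expect the main obstacle to lie.

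\textbf{Step 2: the exact couple.} The filtration from Step 1 gives an exact couple, and hence a spectral sequence of \(\mRO\)-graded Mackey functors. I would index it so that the slices contributing to \(\pi_{\mstar}\) are exactly those of slice degree \(\dim\mstar\). Regrading by \(s\), the \(E_2\)-term then becomes \(\pi_{\mstar-s}\big(Q\otimes_{P^0A}P^{\dim\mstar}_{\dim\mstar}A\big)\). This is analogous to how the slice spectral sequence is regraded, which also explains why the differentials are "Adams type." Tambara (norm) structure on the pages should come from the \(G\)-commutative multiplicative structure of the filtered object, together with the fact that norms multiply slice filtration by the index.

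\textbf{Step 3: strong convergence.} I would show that \(R\otimes_A P_nA\) becomes increasingly slice-connective, and hence increasingly connective in the underlying sense, as \(n\to\infty\). Since \(R\) is connective and slice \(n\)-connective spectra are roughly \((n/|G|)\)-connective, for each fixed \(\mstar\) only finitely many filtrations contribute. This gives strong convergence to \(\pi_{\mstar}R\).
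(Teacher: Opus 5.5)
Your proposal is correct and is essentially the paper's proof: filter \(R\) by \(R\otimes_A P_kA\), identify the layers with \(Q\otimes_{P^0A}P^k_kA\) because the \(A\)-action on each slice factors through \(P^0A\), get the Tambara structure from the \(G\)-commutative filtered structure of the slice tower, and deduce strong convergence from the connectivity of \(R\otimes_A P_kA\) tending to infinity. The differences are minor: you justify the factorization through \(P^0A\) via the associated graded of the multiplicative slice filtration, where the paper simply says ``by construction,'' and you obtain the connectivity bound directly from slice-connectivity of \(R\otimes_A P_kA\) rather than through a K\"unneth spectral sequence.
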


In general, the \(E_2\)-term will be tricky to compute, given the relative tensor that appears.  There is a Lewis--Mandell K\"unneth spectral sequence computing the homotopy of the relative smash product \cite{LewisMandell}. In the case that the slices of \(A\) are free \(P^0(A)\)-modules, we can easily identify the \(E_2\)-term. This is the case for \(H\mF_2\otimes \Xin\).

\begin{corollary}\label{cor:RelativeToAbsolute}
    There is a spectral sequence of graded Tambara functors with \(E_2\)-term
    \[
        \pi_{\mstar}\big(H\mF_2\smashover{\Xin} H\mZ\big)[G\cdot \bar{m}_1,\dots]\Rightarrow \pi_{\mstar}\big(H\mF_2\otimes H\mZ\big).
    \]
\end{corollary}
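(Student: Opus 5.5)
We apply the preceding theorem with \(A=\Xin\), \(R=H\mF_2\otimes\Xin\), and \(f\) the unit map \(\Xin\simeq \mathbb S^0\otimes\Xin\to H\mF_2\otimes\Xin\). All of these are connective \(G\)-commutative ring spectra. By the Reduction Theorem of \cite{HHR}, \(P^0\Xin\simeq H\mZ\), so the pushout is
\[
Q\simeq H\mZ\smashover{\Xin}\big(H\mF_2\otimes\Xin\big)\simeq H\mF_2\otimes H\mZ .
\]
This is the abutment we want. It remains to identify the \(E_2\)-term
\[
\pi_{\mstar-s}\big(Q\smashover{H\mZ}P^{k}_{k}\Xin\big),
\]
and the first step there is to understand the slices of \(\Xin\) as \(H\mZ\)-modules.

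By the Slice Theorem of \cite{HHR}, \(P^k_k\Xin\) is \(H\mZ\) smashed with the wedge of slice cells indexed by monomials of degree \(k\) in the twisted monoid ring \(\mathbb S^0[G\cdot\bar{r}_1,\dots]\). That is, the slice associated graded is
\[
H\mZ\otimes A\simeq H\mZ[G\cdot\bar r_1,\dots],
\]
a free \(H\mZ\)-module on induced representation spheres \(G_+\smashover{H}S^{V}\). Hence
\[
Q\smashover{H\mZ}P^k_k\Xin\simeq Q\otimes\bigvee (G_+\smashover{H}S^{V}),
\]
with no Künneth spectral sequence needed. Its homotopy is the degree \(k\) part of the free \(\pi_{\mstar}Q\)-algebra on the \(G\)-orbit of classes in degrees \(i\rho_2\). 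The algebra structure is compatible with norms because the splitting comes from the twisted monoid ring, which is a \(G\)-commutative algebra.

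There is an indexing mismatch to settle. The theorem's \(E_2\)-term is stated with \(\pi_{\mstar}Q\) in front, but the corollary has \(\pi_{\mstar}(H\mF_2\smashover{\Xin}H\mZ)\). I need to recheck which of \(A\), \(R\), \(Q\) plays which role. The corollary's \(E_2\) has base \(H\mF_2\smashover{\Xin}H\mZ\) and abutment \(H\mF_2\otimes H\mZ\), so the correct choice is \(A=\Xin\) and \(R=H\mF_2\otimes\Xin\). Then
\[
Q=R\smashover{\Xin}H\mZ\simeq H\mF_2\otimes H\mZ,
\]
which is wrong for the base. Instead take \(A=H\mF_2\otimes\Xin\), viewed as an \(H\mF_2\)-algebra, with \(R=H\mF_2\otimes H\mZ\). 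Its slices are
\[
H\mF_2\otimes P^k_k\Xin\simeq H\mF_2\otimes H\mZ\otimes(\text{slice cells}),
\]
so \(P^0A\simeq H\mF_2\otimes H\mZ\). This also fails, so I will use the following alternative.

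Filter \(R=H\mF_2\otimes H\mZ=H\mF_2\otimes\Xin\smashover{\Xin}H\mZ\) by the slice tower of the first factor \(\Xin\), via the theorem's method of proof. Concretely, \(H\mF_2\otimes H\mZ\simeq (H\mF_2\otimes\Xin)\smashover{\Xin}H\mZ\), and we filter the \(\Xin\) in \(H\mF_2\otimes\Xin\) by its slice tower, relative over \(H\mF_2\). The associated graded is
\[
\big(H\mF_2\smashover{\Xin}H\mZ\big)\otimes(\text{slice cells of }\Xin),
\]
using the Slice Theorem identification of the slice cells with monomials in the \(\bar m_i\) under the Thom isomorphism (Corollary~\ref{cor:Homotopy of EXin}). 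This gives the \(E_2\)-term \(\pi_{\mstar}(H\mF_2\smashover{\Xin}H\mZ)[G\cdot\bar m_1,\dots]\). Strong convergence follows from the slice tower convergence exactly as in the theorem.

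The main obstacle is the bookkeeping that makes this filtration multiplicative with norms, which is needed for a spectral sequence of graded Tambara functors. To handle it I would use the \(G\)-commutative multiplicative slice filtration on \(\Xin\) and base change along \(H\mF_2\otimes(-)\).
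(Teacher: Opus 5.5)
There is a genuine gap. Your second attempt, with \(A=H\mF_2\otimes\Xin\) and \(R=H\mF_2\otimes H\mZ\), is exactly the pushout square the paper uses. You discarded it because of a false claim. The slice tower of \(H\mF_2\otimes\Xin\) is \emph{not} \(H\mF_2\otimes P_k\Xin\), and in particular \(P^0(H\mF_2\otimes\Xin)\not\simeq H\mF_2\otimes H\mZ\). Smashing with \(H\mF_2\) does not preserve slices: \(H\mF_2\otimes H\mZ\) is not a zero-slice, since already its underlying spectrum \(H\F_2\otimes H\Z\) has homotopy in positive degrees.

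The missing idea is to compute the slices of \(H\mF_2\otimes\Xin\) from the Thom isomorphism, not from the Slice Theorem for \(\Xin\). By Corollary~\ref{cor:Homotopy of EXin}, \(H\mF_2\otimes\Xin\simeq H\mF_2\otimes A^h_n\). This is a wedge of copies of \(H\mF_2\) smashed with slice cells indexed by monomials in the \(G\cdot\bar m_i\). Each summand is already a slice, so \(H\mF_2\otimes\Xin\) is its own slice associated graded and \(P^0(H\mF_2\otimes\Xin)\simeq H\mF_2\). The pushout is then \(Q\simeq (H\mF_2\otimes H\mZ)\smashover{H\mF_2\otimes\Xin}H\mF_2\simeq H\mF_2\smashover{\Xin}H\mZ\), using \(H\mF_2\otimes H\mZ\simeq (H\mF_2\otimes\Xin)\smashover{\Xin}H\mZ\). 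Moreover \(Q\smashover{H\mF_2}Gr(H\mF_2\otimes\Xin)\simeq Q\otimes A^h_n\) is free over \(Q\). So the \(E_2\)-term is \(\pi_{\mstar}(H\mF_2\smashover{\Xin}H\mZ)[G\cdot\bar m_1,\dots]\), with no K\"unneth spectral sequence needed. The Tambara structure and convergence come directly from the theorem, because the slice filtration of the \(G\)-commutative ring \(H\mF_2\otimes\Xin\) is \(G\)-commutative. The bookkeeping obstacle you anticipate therefore never arises.

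Your fallback filtration does not produce the stated \(E_2\)-term. Read literally, you filter by \(H\mF_2\otimes P_k\Xin\) and base change along \(\Xin\to H\mZ\). The layers are then \(H\mF_2\otimes\big(P^k_k\Xin\smashover{\Xin}H\mZ\big)\simeq H\mF_2\otimes W_k\otimes\big(H\mZ\smashover{\Xin}H\mZ\big)\), where \(W_k\) is the wedge of slice cells on \(\br\)-monomials of degree \(k\). By Theorem~\ref{thm:Relative Homology as Spectrum}, this is \((H\mF_2\otimes H\mZ)\otimes(\mathbb S^0\smashover{A}\mathbb S^0)\otimes W_k\), with \(A=\mathbb S^0[G\cdot\br_1,\dots]\). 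That contains the very spectrum you are trying to compute; it is just \(H\mF_2\) smashed with the Greenlees--Serre spectral sequence for \(\Xin\to H\mZ\).

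You reach the associated graded \((H\mF_2\smashover{\Xin}H\mZ)\otimes(\text{slice cells})\) only by conflating two different decompositions. The Slice Theorem cells of \(\Xin\) are indexed by \(\br\)-monomials and carry \(H\mZ\) coefficients. The \(\bar m\)-monomials instead index the Thom splitting of \(H\mF_2\otimes\Xin\). The two are related by a nontrivial Hurewicz map; for \(n=1\), for example, \(\br_{2^k-1}\) maps to zero. The filtration you need is the slice filtration of \(H\mF_2\otimes\Xin\) itself, not \(H\mF_2\) smashed with that of \(\Xin\).
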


Again, this case is actually purely algebraic. The differentials are built out of remembering that the classes \(\bar{r}_i\) are algebraic combinations of the classes \(\bar{m}_i\) and the algebra generators of \(\pi_{\mstar} \big(H\mF_2\smashover{\Xin} H\mZ\big)\) are a kind of formal null-homotopy of the classes \(\bar{r}_i\). In other words, we can reinterpret Corollary~\ref{cor:RelativeToAbsolute} as a kind of Kozsul resolution for the homology of \(H\mZ\) with \(\mF_2\)-coefficients. We sketch this complete computation for \(G=C_2\), rebuilding the work of Hu--Kriz \cite{HuKriz}.

\subsection*{Notation}
We work in the category of genuine \(G\) spectra, viewed as a \(G\)-symmetric monoidal stable \(\infty\)-category. 

The letter \(G\) will almost exclusively refer to the cyclic group \(C_{2^n}\), and letters \(K\) and \(H\) will often denote subgroups. 

Let \(\sigma\) denote the sign representation. Let \(\lambda\) denote the underlying real representation of the complex representation of \(G\) which takes a generator to multiplication by \(e^{2\pi i/2^n}\). Let \(\rho_H\) denote the regular representation of a group \(H\), and let \(\rho_2\) denote the regular representation of \(C_2\).

Given a representation \(V\) of \(G\), let
\[
    a_V\colon S^0\to S^V
\]
denote the inclusion of \(\{0,\infty\}\) into \(S^V\).

Let \(\mathbb M_2\) denote the \(RO(C_2)\)-graded homology of a point. 

The wildcard for integer gradings will be an asterisk. For \(RO(G)\)-gradings, we will use a star: \(\star\). Finally, we will also work often with a more general grading, allowing not only (virtual) representations of \(G\), but also induced up (virtual) representations of \(H\) (See \cite{AngeltveitBohmann, HillFreeness}). The corresponding ``wildcard'' in this \(\mRO\)-grading is \(\m{\star}\).

Finally, several computations will take place in various K\"unneth spectral sequences and use the homology suspension. To avoid confusion with the sign representation, we will use \(\varsigma x\) for the homology suspension of an element \(x\) in the homotopy of a ring spectrum.

\subsection*{Acknowledgements}
The authors thank Agn\`{e}s Beaudry, Prasit Bhattacharya, Tyler Lawson, and Danny XiaoLin Shi for their close reading of early drafts of this paper. Part of the work was completed while the first author was visiting the Isaac Newton Institute for Mathematical Sciences, Cambridge, during the program Equivariant Homotopy Theory in Context and supported by EPSRC grant no EP/Z000580/1.

\section{Twisted monoid rings and the Reduction Theorem}\label{sec:TwistedMonoidRings}

The proof of Theorem~\ref{thm:IntegralAdditiveRelativeSteenrod} uses the Reduction Theorem of \cite{HHR} in its guise as a presentation of \(H\mZ\) as a \(\Xin\)-module. We recall and recast some of the necessary pieces here. 

\subsection{Free associative algebras}
Recall that the infinite suspension functor is a strong symmetric monoidal functor, taking the Cartesian product of \(G\)-CW complexes to the smash product of genuine \(G\)-spectra. In particular, we have a variant of the Snaith splitting.

\begin{definition}
If \(X\) is a \(G\)-CW spectrum, then let
\[
\mathbb S^{0}[X]=\bigoplus_{i\in\mathbb N}X^{\otimes i}
\]
be the free associative algebra on \(X\).
\end{definition}

\begin{lemma}\label{lem:James}
The free associative algebra on a pointed, simply connected \(G\)-space \(X\) is given by
\[
\mathbb S^{0}[\Sigma^{\infty}_{+}X]=\Sigma^{\infty}_{+} \Omega\Sigma X.
\]
\end{lemma}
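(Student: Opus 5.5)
The plan is to deduce the statement from the classical James splitting, applied fixed-point-wise. Since the free associative algebra \(\mathbb S^0[\Sigma^\infty_+ X]=\bigoplus_i (\Sigma^\infty_+X)^{\otimes i}\) involves \(\Sigma^\infty_+X\), we first note that, because \(\Sigma^\infty\) is strong symmetric monoidal, \((\Sigma^\infty_+X)^{\otimes i}\simeq \Sigma^\infty_+(X^{\times i})\). The target \(\Sigma^\infty_+\Omega\Sigma X\) should then be identified with \(\bigoplus_i \Sigma^\infty X^{\wedge i}\), possibly after adjusting for basepoints. Strictly, the equivalence that holds is \(\mathbb S^0[\Sigma^\infty X]\simeq\Sigma^\infty_+\Omega\Sigma X\), since the reduced suspension spectrum gives \(\bigoplus_i \Sigma^\infty X^{\wedge i}\). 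I will prove this version and read the statement accordingly; the unreduced version differs by the usual splitting \(\Sigma^\infty_+X\simeq \mathbb S^0\oplus\Sigma^\infty X\).

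The first step is to build a comparison map of associative algebras. The inclusion \(X\to\Omega\Sigma X\) is a \(G\)-map, and \(\Omega\Sigma X\) is a \(G\)-equivariant \(A_\infty\)-monoid under loop composition. Applying \(\Sigma^\infty_+\) produces an associative algebra in \(G\)-spectra together with a map \(\Sigma^\infty X\to\Sigma^\infty_+\Omega\Sigma X\). By the universal property of the free associative algebra, this extends to an algebra map
\[
\mathbb S^0[\Sigma^\infty X]\to\Sigma^\infty_+\Omega\Sigma X .
\]
The same construction could be carried out with the James construction \(J(X)\), which is a strictly associative \(G\)-monoid. The natural map \(J(X)\to\Omega\Sigma X\) is a \(G\)-equivalence: on \(H\)-fixed points it is the nonequivariant James map \(J(X^H)\to\Omega\Sigma(X^H)\), which is an equivalence because \(X^H\) is connected.

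The second step is to check that this algebra map is an equivalence of genuine \(G\)-spectra. It suffices to do this on geometric fixed points \(\Phi^H\) for all \(H\le G\). The functor \(\Phi^H\) is symmetric monoidal, commutes with colimits, and satisfies \(\Phi^H\Sigma^\infty_+Y=\Sigma^\infty_+Y^H\). Applying it, the map becomes the nonequivariant algebra map
\[
\mathbb S^0[\Sigma^\infty X^H]\to\Sigma^\infty_+\Omega\Sigma(X^H).
\]
This is the classical Bott--Samelson/James result: on homology both sides are the tensor algebra \(T(\tilde H_*(X^H))\), so the map is an equivalence of connective spectra by Hurewicz and Whitehead.

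The main obstacle I expect is the connectivity input. The James and Bott--Samelson arguments need each fixed-point space \(X^H\) to be connected, so that \(\Omega\Sigma X^H\) is the group completion with no correction needed. The hypothesis ``simply connected \(G\)-space'' should be read as requiring every \(X^H\) to be (simply) connected, and I will state it that way explicitly. If only the underlying space were assumed connected, some \(X^H\) could be disconnected (for example \(S^{\sigma}\)-type examples, where the fixed points are \(S^0\)). In that case \(\Omega\Sigma X^H\) would be a group completion and the map would fail to be an equivalence. For the spheres \(S^{i\rho_2}\) and their norms used later in the paper, every fixed-point space is a sphere of positive dimension, so the hypothesis holds.
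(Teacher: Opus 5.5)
The paper gives no proof of this lemma. It is presented as ``a variant of the Snaith splitting,'' justified only by the remark that \(\Sigma^\infty\) is strong symmetric monoidal. Your argument is correct and supplies the missing proof, and both of your adjustments to the statement are needed.

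As literally written, with \(\Sigma^\infty_+X\), the lemma is false. For example, \(\pi_0\mathbb S^0[\Sigma^\infty_+S^2]\) is free of infinite rank, while \(\pi_0\Sigma^\infty_+\Omega S^3\cong\Z\). The version the paper actually uses, for \(\mathbb S^0[S^{i\rho_2}]\) and in the formula \(A\simeq\Sigma^\infty_+\Omega(\prod_i\Map^{C_2}(G,S^{i\rho_2+1}))\), is your \(\mathbb S^0[\Sigma^\infty X]\simeq\Sigma^\infty_+\Omega\Sigma X\).

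Likewise, the right hypothesis is that every \(X^H\) be connected. Simple connectivity of the underlying space is not enough: for \(X=S^{2\sigma}\), applying \(\Phi^{C_2}\) turns the map into \(\bigoplus_{k\geq 0}\mathbb S\to\Sigma^\infty_+\Z\). Simple connectivity of all fixed points is more than is needed, and it would exclude \(S^{\rho_2}\), whose fixed points are \(S^1\), even though the paper needs the case \(i=1\).

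There is one small imprecision in your last step. Integrally, \(H_*(\Sigma^\infty Y^{\wedge n};\Z)\) need not be \(\tilde H_*(Y;\Z)^{\otimes n}\) when there is torsion, and Bott--Samelson is a statement over a field. Run the comparison with \(\F_p\) and \(\mathbb Q\) coefficients instead. There both sides are \(T(\tilde H_*(Y;k))\), and your algebra map is the identity on generators. A map of bounded-below spectra inducing isomorphisms on all of these homologies is an equivalence.

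Alternatively, you can avoid both homology and geometric fixed points. The word-length filtration on the James construction has \(J_n(X)/J_{n-1}(X)\cong X^{\wedge n}\). The algebra map \(\mathbb S^0[\Sigma^\infty X]\to\Sigma^\infty_+J(X)\) preserves word-length filtrations and is the identity on associated graded pieces. So it is an equivalence of genuine \(G\)-spectra for every based \(G\)-CW complex \(X\). Connectivity of the \(X^H\) then enters only through \(J(X)\simeq\Omega\Sigma X\), which you already verify fixed-pointwise.
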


Since the infinite suspension is a strong symmetric monoidal functor from spaces with the Cartesian product to spectra, we also have a related formula for smash products of free associative algebras on suspension spectra of  spaces.

\begin{lemma}\label{lem:SmashofFrees}
The smash product of the free associative algebras on \(X\) and on \(Y\) is given by
\[
\mathbb S^{0}[\Sigma^{\infty}_{+}X]\otimes\mathbb S^{0}[\Sigma^{\infty}_{+}Y]\simeq\Sigma^{\infty}_{+}\Omega\big(\Sigma X \times\Sigma Y\big),
\]
as associative algebras.
\end{lemma}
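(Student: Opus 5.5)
We prove Lemma~\ref{lem:SmashofFrees} by combining Lemma~\ref{lem:James} with the fact that \(\Sigma^\infty_+\) is strong symmetric monoidal. We work at the level of spaces first and pass to spectra at the end.

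First, Lemma~\ref{lem:James} gives equivalences of associative algebras
\[
\mathbb S^0[\Sigma^\infty_+X]\simeq\Sigma^\infty_+\Omega\Sigma X,
\qquad
\mathbb S^0[\Sigma^\infty_+Y]\simeq\Sigma^\infty_+\Omega\Sigma Y.
\]
Here \(\Omega\Sigma X\) carries its loop-composition (or James monoid) structure. We assume, as in Lemma~\ref{lem:James}, that \(X\) and \(Y\) are pointed and simply connected. Second, since \(\Sigma^\infty_+\) takes Cartesian products to smash products, there is an equivalence
\[
\Sigma^\infty_+\Omega\Sigma X\otimes\Sigma^\infty_+\Omega\Sigma Y\simeq\Sigma^\infty_+\big(\Omega\Sigma X\times\Omega\Sigma Y\big).
\]
This is an equivalence of associative algebras when the right-hand side carries the product monoid structure, because a strong symmetric monoidal functor sends the Cartesian product of \(E_1\)-monoids (the coproduct-free tensor of monoids in a Cartesian category) to the tensor product of the image algebras. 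Third, \(\Omega\) preserves products, so
\[
\Omega\Sigma X\times\Omega\Sigma Y\simeq\Omega(\Sigma X\times\Sigma Y)
\]
as group-like \(E_1\)-spaces. Loop composition on a product is computed coordinatewise, so this equivalence is multiplicative, and it is natural and \(G\)-equivariant because it is induced by the projections. Composing the three equivalences gives the statement.

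The main point requiring care is that each identification respects the associative algebra structure in the \(\infty\)-categorical sense, not merely up to homotopy. I would handle this by working with \(\Sigma^\infty_+\colon \mathrm{Mon}_{E_1}(\mathcal S^G)\to \mathrm{Alg}_{E_1}(\mathrm{Sp}^G)\), the functor induced on \(E_1\)-algebras by a symmetric monoidal functor. Since the tensor product of \(E_1\)-algebras is computed on underlying objects, and the product in \(\mathrm{Mon}_{E_1}\) of a Cartesian category is the Cartesian product, the functor automatically converts products of monoids into tensor products of algebras.

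Equivariance is also not an issue. The James equivalence \(J(X)\simeq\Omega\Sigma X\) holds fixed-pointwise for connected fixed points, and we apply it only to \(X\) and \(Y\) separately. We never need a James-type splitting for \(\Sigma X\times\Sigma Y\) itself, which is good because \(\Sigma X\times\Sigma Y\) is not a suspension.
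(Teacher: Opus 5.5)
Your argument is correct and matches the paper's, which justifies the lemma only by noting that \(\Sigma^\infty_+\) is strong symmetric monoidal from Cartesian products to smash products; like the paper, you combine Lemma~\ref{lem:James} on each factor with that monoidality (applied to \(E_1\)-algebras) and the fact that \(\Omega\) commutes with products. One caveat, inherited from the paper's statement of Lemma~\ref{lem:James}: the free algebra should be on the reduced spectrum \(\Sigma^\infty X\), as in the later use of \(\mathbb S^0[S^{i\rho_2}]\). For connected \(X\), the underlying \(\pi_0\) of \(\mathbb S^0[\Sigma^\infty_+X]\) is \(\Z[t]\), whereas that of \(\Sigma^\infty_+\Omega\Sigma X\) is \(\Z\); with this correction your proof goes through verbatim.
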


This approach gives us a way to interpret the smash products over a free associative algebra (or the smash products thereof).

\begin{lemma}\label{lem:SmashOverFree}
Let \(X\) be a path connected \(G\)-CW complex. We have a natural weak equivalence
\[
\mathbb S^{0}\smashover{\Sigma^{\infty}_{+}\Omega X}\mathbb S^{0}\simeq \Sigma^{\infty}_{+} X.
\]
\end{lemma}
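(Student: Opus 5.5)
We prove the equivalence by comparing the two-sided bar construction with the suspension-spectrum functor applied to a simplicial model of \(X\). The relative tensor product is computed by
\[
\mathbb S^{0}\smashover{\Sigma^{\infty}_{+}\Omega X}\mathbb S^{0}\simeq \big|B_\bullet(\mathbb S^0,\Sigma^\infty_+\Omega X,\mathbb S^0)\big|,
\qquad
B_k=(\Sigma^\infty_+\Omega X)^{\otimes k},
\]
with faces induced by the multiplication and by the augmentation \(\Sigma^\infty_+\Omega X\to\mathbb S^0\) coming from \(\Omega X\to *\). We take \(\Omega X\) to be a strictly associative model, such as the Moore loops or the \(G\)-equivariant Kan loop group, so that everything is honestly simplicial.

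\(\Sigma^\infty_+\) is strong symmetric monoidal from \(G\)-spaces under Cartesian product to \(G\)-spectra, as recalled before Lemma~\ref{lem:James}. Applying it levelwise therefore gives an isomorphism of simplicial \(G\)-spectra
\[
B_\bullet(\mathbb S^0,\Sigma^\infty_+\Omega X,\mathbb S^0)\cong \Sigma^\infty_+ B_\bullet(*,\Omega X,*).
\]
\(\Sigma^\infty_+\) is a left adjoint, so it commutes with geometric realization (homotopy colimits). Hence the left-hand side is \(\Sigma^\infty_+ B\Omega X\), where \(B\Omega X=|B_\bullet(*,\Omega X,*)|\) is the classifying space. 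Naturality in \(X\) is clear from the construction.

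The remaining step is the natural \(G\)-equivalence \(B\Omega X\simeq X\). Nonequivariantly this holds for path connected \(X\): the comparison map \(B\Omega X\to X\), adjoint to \(\Sigma\Omega X\to X\), is an equivalence by the usual path-fibration argument. Equivariantly it suffices to check on \(H\)-fixed points for every \(H\le G\). Fixed points commute with Cartesian products, with \(\Omega\) (since the loop coordinate carries trivial action), and with geometric realization of simplicial \(G\)-spaces. This gives
\[
(B\Omega X)^H\cong B\Omega(X^H).
\]
The nonequivariant result then yields \((B\Omega X)^H\simeq X^H\), provided each \(X^H\) is connected.

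I expect this last point to be the main obstacle. The equivalence really needs \(X\) to be \(G\)-connected, meaning every fixed-point space \(X^H\) is path connected. Otherwise \(B\Omega(X^H)\) only recovers the basepoint component of \(X^H\). So I would interpret ``path connected \(G\)-CW complex'' in the statement in this sense, which holds in every application in the paper, such as \(X=\Sigma Y\) or products of representation spheres \(S^{i\rho_2+1}\). If that interpretation were not intended, the statement would have to be corrected to replace \(X\) by the component of the basepoint in each fixed-point space.
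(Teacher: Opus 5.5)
Your proof is correct. The paper states this lemma without proof, and your bar-construction route is the standard argument the authors are implicitly relying on: symmetric monoidality and colimit-preservation of \(\Sigma^\infty_+\) identify the left side with \(\Sigma^\infty_+B\Omega X\), and then \(B\Omega X\simeq X\) is checked on each \(X^H\).

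Your caveat about the hypothesis is a genuine correction to the statement as written. Take \(G=C_2\) and \(X=S^\sigma\). The left side is \(\Sigma^\infty_+B\Omega S^\sigma\), and \((B\Omega S^\sigma)^{C_2}\simeq B\Omega(S^0)\simeq *\), so its geometric fixed points are \(\mathbb S^0\). By contrast, \(\Phi^{C_2}\Sigma^\infty_+S^\sigma\simeq\mathbb S^0\oplus\mathbb S^0\), so the two sides differ.

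The lemma therefore needs every \(X^H\) to be connected. This does hold in the paper's applications, where the fixed points of \(\Map^{C_2}(G,S^{i\rho_2+1})\) are products of copies of \(S^{2i+1}\) or of \(S^{i+1}\).
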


\begin{corollary}\label{cor:SmashOverFrees}
For any spaces \(X\) and \(Y\), we have
\[
\mathbb S^{0}\smashover{\mathbb S^{0}[\Sigma^{\infty}_{+}X]\otimes\mathbb S^{0}[\Sigma^{\infty}_{+}Y]}\mathbb S^{0}\simeq \Sigma^{\infty}_{+} \big(\Sigma X\times\Sigma Y\big),
\]
and similar for arbitary smash products.
\end{corollary}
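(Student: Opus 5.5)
Combine Lemma~\ref{lem:SmashofFrees} with Lemma~\ref{lem:SmashOverFree}. By Lemma~\ref{lem:SmashofFrees} the algebra is identified, as an associative algebra, with a suspension spectrum:
\[
\mathbb S^{0}[\Sigma^{\infty}_{+}X]\otimes\mathbb S^{0}[\Sigma^{\infty}_{+}Y]\simeq\Sigma^{\infty}_{+}\Omega\big(\Sigma X\times\Sigma Y\big).
\]
Here we use that \(\Omega(\Sigma X\times\Sigma Y)\simeq\Omega\Sigma X\times\Omega\Sigma Y\) as \(A_\infty\)-spaces and that \(\Sigma^\infty_+\) is strong symmetric monoidal. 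Relative tensor products of modules are invariant under equivalences of associative algebras, since the bar constructions are equivalent. So the left-hand side of the corollary becomes
\[
\mathbb S^{0}\smashover{\Sigma^{\infty}_{+}\Omega(\Sigma X\times\Sigma Y)}\mathbb S^{0}.
\]

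To apply Lemma~\ref{lem:SmashOverFree} to the space \(Z=\Sigma X\times\Sigma Y\), we need \(Z\) to be a path connected \(G\)-CW complex. The reduced suspension of a pointed \(G\)-space is connected on all fixed points, because \(\Sigma X^H = (\Sigma X)^H\) for the trivial suspension coordinate, and \(\Sigma X^H\) is connected even if \(X^H\) is empty or disconnected (the basepoint is fixed). Products preserve this property. The lemma then gives \(\Sigma^\infty_+(\Sigma X\times\Sigma Y)\).

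We must also check that the augmentations match. In Lemma~\ref{lem:SmashOverFree}, \(\mathbb S^0\) is a module over \(\Sigma^\infty_+\Omega Z\) via the map \(\Omega Z\to *\). Under the equivalence of Lemma~\ref{lem:SmashofFrees}, this corresponds to the tensor product of the augmentations \(\mathbb S^0[\Sigma^\infty_+X]\to\mathbb S^0\) killing the positive tensor powers, since both are induced by collapsing to a point.

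For arbitrary (finite, then filtered-colimit) smash products, iterate. Lemma~\ref{lem:SmashofFrees} extends by induction to
\[
\bigotimes_j \mathbb S^0[\Sigma^\infty_+X_j]\simeq\Sigma^\infty_+\Omega\Big(\prod_j\Sigma X_j\Big).
\]
For infinite products, write the algebra as a filtered colimit of finite ones and use that \(\Sigma^\infty_+\), relative tensor products and \(\Omega\) of connected spaces commute with filtered colimits. In the intended application (\(\bigotimes_{i}N_{C_2}^G\)), the norm should be handled the same way. Since \(N_{C_2}^G\Sigma^\infty_+ W\simeq\Sigma^\infty_+\Map^{C_2}(G,W)\), and coinduction commutes with \(\Omega\), \(\Sigma\) and \(\Sigma^\infty_+\) in the relevant sense, the normed free algebra is again a suspension spectrum of a loop space.

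The main obstacle is the first step: making sure the equivalence of Lemma~\ref{lem:SmashofFrees} is one of \(A_\infty\)-algebras compatible with augmentations, rather than merely of spectra. I would argue this via the James model \(J(X)\) for \(\Omega\Sigma X\) (Lemma~\ref{lem:James}), whose suspension spectrum is literally the free monoid spectrum, together with the monoidality of \(\Sigma^\infty_+\).
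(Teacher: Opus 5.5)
Your proposal is correct and is the paper's own argument (left implicit at the corollary and reused in the computation of \(H\mZ\smashover{\Xin}H\mZ\)). Lemma~\ref{lem:SmashofFrees} identifies the algebra with \(\Sigma^\infty_+\Omega(\Sigma X\times\Sigma Y)\), and Lemma~\ref{lem:SmashOverFree} applied to the fixed-point-connected space \(\Sigma X\times\Sigma Y\) finishes; your augmentation and connectivity checks supply details the paper omits.

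Two small cautions.

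First, the James model gives \(\Sigma^\infty_+J(X)\simeq\mathbb S^0[\Sigma^\infty X]\), the free algebra on the \emph{reduced} suspension spectrum of the pointed space \(X\). The paper's \(\Sigma^\infty_+\) must be read this way, as in its use for \(\mathbb S^0[S^{i\rho_2}]\). Otherwise \(\Sigma X\) is replaced by \(\Sigma(X_+)\simeq\Sigma X\vee S^1\).

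Second, in your aside on norms, coinduction commutes with \(\Omega\) but not with \(\Sigma\). That is why Lemma~\ref{lem:NormOfFreeAssoc} takes the form \(\Sigma^\infty_+\Omega\Map^H(G,\Sigma X)\).
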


The infinite suspension functor is actually a \(G\)-symmetric monoidal functor, taking coinduction in \(G\)-spaces to the norm.
\begin{proposition}\label{prop:SuspensionGMonoidal}
    For any \(H\)-space \(X\), we have a natural weak equivalence
    \[
        N_H^G\big(\Sigma^{\infty}_{+}X\big)\simeq \Sigma^{\infty}_+\Map^H(G,X).
    \]
\end{proposition}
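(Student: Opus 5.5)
The plan is to reduce to the defining property of the norm on the \(\infty\)-categorical level. In the \(G\)-symmetric monoidal structure on genuine \(G\)-spectra, the norm \(N_H^G\) is characterized as the multiplicative pushforward along \(G/H\to G/G\). On \(G\)-spaces with the Cartesian product, the corresponding multiplicative pushforward is coinduction \(X\mapsto \Map^H(G,X)\). This is because the Cartesian product is the categorical product, and products indexed by the finite \(G\)-set \(G/H\) are exactly coinduction. So the statement amounts to saying that \(\Sigma^\infty_+\colon (G\text{-spaces},\times)\to(G\text{-spectra},\otimes)\) is a \(G\)-symmetric monoidal functor, not merely a symmetric monoidal one.

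The first step is to establish this at the level of the construction of genuine \(G\)-spectra. I would present \(\mathrm{Sp}^G\) as the \(G\)-symmetric monoidal stabilization of pointed \(G\)-spaces obtained by inverting representation spheres, as in Bachmann--Hoyois. There the norms on spectra are defined precisely so that \(\Sigma^\infty\) (from pointed \(G\)-spaces with smash product) is \(G\)-symmetric monoidal, i.e. \(N_H^G\Sigma^\infty Y\simeq \Sigma^\infty N_H^G Y\). On pointed spaces, the norm \(N_H^G Y\) is the indexed smash product \(\Map^H(G,Y)/(\text{fat wedge})\), that is, \(\bigwedge_{G/H} Y\) with twisted action.

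The second step is to add the disjoint basepoint. Indexed smash products of spaces with disjoint basepoints are the indexed Cartesian products with a disjoint basepoint: \(\bigwedge_{G/H}(X_+)\cong (\prod_{G/H}X)_+ = \Map^H(G,X)_+\). This follows from the nonequivariant identity \(X_+\wedge Y_+=(X\times Y)_+\), applied coordinatewise, together with a check that the \(G\)-actions, permuting the factors by the \(H\)-cosets, agree. Combining the two steps gives
\[
N_H^G\Sigma^\infty_+X\simeq\Sigma^\infty N_H^G(X_+)\simeq\Sigma^\infty_+\Map^H(G,X).
\]
Naturality in \(X\) follows because both identifications are natural transformations of functors.

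The main obstacle is the homotopical content of the first step if one works with point-set models, for instance the HHR norm on orthogonal spectra. There one must check that \(N_H^G\) preserves the relevant weak equivalences on suspension spectra of cofibrant \(H\)-spaces, which HHR prove via their analysis of norms of cofibrant objects. One must also check that the point-set norm of \(\Sigma^\infty Y\) is isomorphic to \(\Sigma^\infty\) of the indexed smash product. The latter is a direct computation with the indexed smash product of orthogonal spectra in level \(0\), using that \(\Sigma^\infty\) is left adjoint to evaluation at \(0\) and is strong monoidal. I would first try to cite this directly from HHR (the norm of a suspension spectrum), replacing \(X\) by a cofibrant \(H\)-CW approximation to ensure homotopical correctness.
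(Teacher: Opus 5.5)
Your argument is correct and matches the paper. The paper gives no separate proof; it justifies the statement only by the remark that \(\Sigma^\infty_+\) is \(G\)-symmetric monoidal, carrying coinduction (the indexed Cartesian product) to the norm. You unpack that remark by passing through pointed \(G\)-spaces, where \(\bigwedge_{G/H}(X_+)\cong \Map^H(G,X)_+\), and then invoking the compatibility of \(\Sigma^\infty\) with norms (Bachmann--Hoyois, or HHR at the point-set level after cofibrant replacement).
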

This gives a twisted version of Lemma~\ref{lem:SmashofFrees}.

\begin{lemma}\label{lem:NormOfFreeAssoc}
The norm of the free associative algebra on a pointed \(H\)-space \(X\) is given by
\[
N_{H}^{G}\big(\mathbb S^{0}[\Sigma^{\infty}_{+}X]\big)\simeq \Sigma^{\infty}_{+}\Map^{H}(G,\Omega\Sigma X)\simeq\Sigma^{\infty}_{+}\Omega\Map^{H}(G,\Sigma X).
\]
\end{lemma}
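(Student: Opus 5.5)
The plan is to combine Lemma~\ref{lem:James} with Proposition~\ref{prop:SuspensionGMonoidal}, and then handle the second equivalence by a purely space-level identification. For the first equivalence, apply the norm \(N_H^G\) to the equivalence of Lemma~\ref{lem:James}:
\[
\mathbb S^{0}[\Sigma^{\infty}_{+}X]\simeq\Sigma^{\infty}_{+}\Omega\Sigma X .
\]
Since \(N_H^G\) is a functor on \(H\)-spectra, and in fact a symmetric monoidal one, it carries this equivalence of associative algebras to an equivalence of associative \(G\)-algebras. Proposition~\ref{prop:SuspensionGMonoidal}, applied to the \(H\)-space \(\Omega\Sigma X\), then gives
\[
N_H^G\big(\mathbb S^{0}[\Sigma^{\infty}_{+}X]\big)\simeq N_H^G\big(\Sigma^\infty_+\Omega\Sigma X\big)\simeq \Sigma^{\infty}_+\Map^H(G,\Omega\Sigma X).
\]
For this to be an equivalence of algebras, the equivalence in Proposition~\ref{prop:SuspensionGMonoidal} must be natural and monoidal. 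It is: it is a component of the \(G\)-symmetric monoidal structure on \(\Sigma^\infty_+\) sending coinduction, i.e.\ the indexed Cartesian product, to the norm. So it intertwines the loop-multiplication on \(\Map^H(G,\Omega\Sigma X)\) with the normed multiplication.

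The second equivalence is the space-level homeomorphism
\[
\Map^H(G,\Omega\Sigma X)\cong\Omega\Map^H(G,\Sigma X).
\]
This holds because both sides are \(\Map_*\big(S^1,\Map^H(G,\Sigma X)\big)\), with \(G\) acting only through the coinduction variable. Equivalently, coinduction is a right adjoint and so commutes with the limit defining \(\Omega\), and the \(S^1\) carries trivial action. The homeomorphism is compatible with loop composition, so after applying \(\Sigma^\infty_+\) it is an equivalence of associative algebras.

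I expect the main obstacle to be a hypothesis check rather than a computation. Lemma~\ref{lem:James} was stated for simply connected \(X\), while here \(X\) is only pointed. Read as an equivariant statement, Lemma~\ref{lem:James} needs each fixed-point space \(X^K\), for \(K\le H\), to be connected, so I will assume \(X\) is \(H\)-connected in this sense, which is what the James splitting needs. Alternatively, for \(\Sigma X\) one can use the James construction \(J(X)\simeq\Omega\Sigma X\) together with its stable splitting; the splitting holds fixed-point-wise and is therefore a genuine equivalence. In the applications, \(X=S^{i\rho_2}\), and its fixed points \(S^{i}\) and \(S^{2i}\) are connected, so these hypotheses hold.
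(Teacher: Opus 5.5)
Your proposal is correct and is the paper's argument: the first equivalence is Lemma~\ref{lem:James} followed by Proposition~\ref{prop:SuspensionGMonoidal} applied to \(\Omega\Sigma X\), and the second comes from commuting the two right adjoints \(\Map^H(G,-)\) and \(\Omega\). Your check that each fixed-point space \(X^K\) must be connected is a worthwhile clarification that the paper leaves implicit, as is a point you inherited from the paper's statement: the James splitting identifies \(\Sigma^\infty_+\Omega\Sigma X\) with the free algebra on the reduced spectrum \(\Sigma^\infty X\), not on \(\Sigma^\infty_+X\).
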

\begin{proof}
The first equivalence is the composite of Lemma~\ref{lem:James} and Proposition~\ref{prop:SuspensionGMonoidal}. The weak equivalence is just commuting the two right adjoints in \(G\)-spaces. 
\end{proof}

\subsection{Twisted monoid rings and \texorpdfstring{\(\Xin\)}{Xin}}

Given a \(G\)-commutative ring spectrum \(R\) and an element in the \(H\)-equivariant homotopy
\[
    \bar{x}\in \pi_V^H(R)
\]
for some (virtual) \(H\)-representation \(V\), we can build a map of associative algebras:
\[
    \mathbb S^0[S^V]\to i_H^\ast R.
\]
Since \(R\) is a \(G\)-commutative monoid, we can then norm this up to \(G\) and compose with the counit of the norm-forget adjunction, getting a map of associative rings
\[
    N_H^G\big(\mathbb S^0[S^V]\big)\to N_H^G i_H^\ast R\to R.
\]
Given multiple elements, we can tensor these algebras together. Our analysis hinges on the fact that we can understand the source algebra as the suspension spectrum of a space, and that for \(R=\Xin\), we have well-behaved generators.

To state this, let \(\mathbb Z_{-}\) denote the integral sign representation of \(C_2\).

\begin{theorem}[{\cite[Proposition 5.27, Lemma 5.33]{HHR}}]
There are classes
\[
\bar{r}_{i}\colon S^{i\rho_{2}}\to i_{C_{2}}^{\ast}\Xin
\]
such that the induced map of commutative rings
\[
\bigotimes_{i\geq 1} \Sym\big(\Ind_{C_2}^{G} (\Z_{-})^{\otimes i}\big) \xrightarrow{\otimes \ind\bar{r}_i} \mpi_{\ast\rho_2} (\Xin)(C_2/C_2)
\]
is an isomorphism.
\end{theorem}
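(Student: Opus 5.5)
This statement is quoted from \cite[Proposition 5.27, Lemma 5.33]{HHR}, so the plan is to reconstruct the argument there. It has three ingredients: the computation of \(\pi_{\ast\rho_2}^{C_2}\) of \(\Xin\) via the underlying homotopy, the choice of generators \(\bar{r}_i\) by formal group law considerations, and the fact that \(\pi_{\ast\rho_2}^{C_2}\) of a norm is controlled by its underlying homotopy.

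\textbf{Step 1: underlying homotopy and the Weyl action.} Since \(i_e^\ast\Xin=MU^{\otimes 2^{n-1}}\), its homotopy is a polynomial ring over \(\Z\) with \(G\) acting through \(G/C_2\)-permutation of the tensor factors, and with the generator of \(C_2\) acting by complex conjugation. Choose the \(\bar{r}_i\) as in HHR. Take the formal group law on the first factor of \(i_{C_2}^\ast\Xin\), with its logarithm \(\log(x)=\sum \bar{m}_i x^{i+1}\). Compare it with its conjugate by a generator \(\gamma\) of \(G\); the coefficients \(\bar{r}_i\) of the resulting strict isomorphism \(x+\sum\bar{r}_i x^{i+1}\) are \(C_2\)-equivariant classes in degree \(i\rho_2\), because real orientations land in \(\rho_2\)-multiples. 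Modulo decomposables and the images of the lower \(\bar{r}_j\), the underlying classes \(r_i,\gamma r_i,\dots,\gamma^{2^{n-1}-1}r_i\) together form polynomial generators of \(\pi_\ast^e\Xin\). This gives an isomorphism of underlying rings
\[
\Z[G\cdot r_1,G\cdot r_2,\dots]\to \pi_\ast^e\Xin.
\]
The twisted sign appears because \(\gamma^{2^{n-1}}\) acts on degree \(2i\) by \((-1)^i\). This is exactly the \(C_2\)-module \(\Ind_{C_2}^G(\Z_-)^{\otimes i}\) on the underlying level.

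\textbf{Step 2: detection on \(C_2\)-fixed points.} The main obstacle is showing that the restriction map
\[
\mpi_{\ast\rho_2}\Xin(C_2/C_2)\to \pi^e_{2\ast}\Xin
\]
is an isomorphism. I would argue via the slice and cellular structure. \(\Xin\) admits a cell structure, refining the HHR slice filtration, with cells of the form \(G_+\smashover{H} S^{k\rho_H}\). For such a cell, restricted to \(C_2\), the groups \(\pi^{C_2}_{m\rho_2}\) are computed from \(\pi_{m\rho_2-k\rho}\) of spheres induced from subgroups. The HHR "gap"-type lemma then applies: \(\pi^{C_2}_{m\rho_2}\) of \(C_2\)-slice cells of dimension \(2k\) vanishes unless \(k=m\), where it agrees with the underlying group. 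An induction up the slice tower, or using that \(i_{C_2}^\ast\Xin\) is a polynomial \(MU_\R\)-algebra on induced cells (as is done for \(MU_\R\) via Araki/Hu–Kriz), then gives the isomorphism. Concretely, I would use that \(i_{C_2}^\ast\Xin\simeq MU_\R\otimes MU^{\otimes\cdots}\) with induced factors, and that \(\pi_{\ast\rho_2}MU_\R\cong\pi_{2\ast}MU\).

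\textbf{Step 3: assembling.} Under this identification, the map from the symmetric algebra is the induced-norm map built from the \(\bar{r}_i\). Its underlying map is the isomorphism of Step 1, and the target is detected underlying by Step 2. The source \(\Sym(\Ind_{C_2}^G(\Z_-)^{\otimes i})\) evaluated at \(C_2/C_2\) is also the underlying polynomial ring, since \(\Sym\) of an induced module is torsion-free and is determined by restriction. The map is therefore an isomorphism at \(C_2/C_2\).
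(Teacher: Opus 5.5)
The paper gives no argument for this statement beyond the citation to HHR. Your architecture matches that shape: polynomial generators of \(\pi^e_*\Xin\) whose \(G\)-orbits are the \(\gamma^j r_i\), combined with the restriction isomorphism \(\pi^{C_2}_{*\rho_2}\Xin\to\pi^e_{2*}\Xin\). The genuine gap is in Step~1. The algebraic heart of the statement is that the orbits \(G\cdot r_i\) are polynomial generators. You assert this without argument, and for the classes you actually describe it is false.

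Suppose \(r_i\) is the \(x^{i+1}\)-coefficient of the strict isomorphism \(F\to\gamma F\). Then the composite of its \(2^{n-1}\) conjugates is a strict isomorphism \(F\to\gamma^{2^{n-1}}F=c(F)\), where \(c(F)(x,y)=-F(-x,-y)\). Since \(\pi^e_*\Xin\) is torsion-free, this isomorphism is unique, so it equals \(-[-1]_F(x)\). Coefficients of composites of strict isomorphisms add modulo decomposables, so
\[
\sum_{j=0}^{2^{n-1}-1}\gamma^j r_i\equiv\big(1-(-1)^i\big)m_i \quad\text{modulo decomposables,}
\]
where the \(m_i\) are the logarithm coefficients of \(F\). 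Consequences:
\begin{itemize}
\item For even \(i\) the orbit sum is decomposable. For \(G=C_4\) this gives \(\gamma r_2\equiv -r_2\), although the degree-\(4\) indecomposables have rank \(2\).
\item For odd \(i\neq 2^k-1\) the orbit spans only an index-\(2\) sublattice.
\item For \(G=C_2\) the recipe does not even produce generators of \(\pi_*MU\).
\end{itemize}
The recipe works only in degrees \(2^k-1\).

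What is missing is a computation of the degree-\(2i\) indecomposables \(Q_{2i}\) as a \(\Z[G]\)-lattice. Use \(x_i\equiv\pm\lambda_i m_i\), where \(\lambda_i=p\) if \(i+1\) is a power of the prime \(p\) and \(\lambda_i=1\) otherwise. Use also the strict-isomorphism coefficients \(\equiv\gamma^j m_i-\gamma^{j+1}m_i\). Together these give \(Q_{2i}=\{\sum_j a_j\gamma^j m_i : a_j\in\Z,\ \lambda_i\mid\sum_j a_j\}\). You must show this lattice is free of rank one over \(\Z[G]/(\gamma^{2^{n-1}}-(-1)^i)\), and take \(\bar r_i\) to lift a generator:
\begin{itemize}
\item when \(\lambda_i=1\), a generator coming from the first \(\MUR\) factor;
\item when \(i=2^k-1\), the strict-isomorphism coefficient;
\item when \(\lambda_i=p\) is odd, a lift of \(\sum_{j<p}\gamma^j m_i\), which generates by a cyclotomic-unit norm computation.
\end{itemize}
A map of connected polynomial rings of finite type that is an isomorphism on indecomposables is then an isomorphism.

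Step~2 also needs repair. The vanishing you invoke fails for sphere cells. \(\pi^{C_2}_{m\rho_2}S^{k\rho_2}=\pi^{C_2}_{(m-k)\rho_2}\mathbb S^0\) need not vanish for \(m>k\): the class \(\eta\,\eta_{C_2}\in\pi^{C_2}_{\rho_2}\mathbb S^0\) restricts to \(\eta^2\neq 0\). For \(m=k\) the group is the Burnside ring \(A(C_2)\), not \(\Z\). The gap-type statement concerns \(H\mZ\)-module slice cells, and running it up the slice tower of \(\Xin\) is circular. The Slice and Reduction Theorems are proved using exactly these \(\bar r_i\).

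Your fallback is the argument that works, with one correction. Because \(C_2\) is central in \(G\), \(i^*_{C_2}\Xin\simeq\MUR^{\otimes 2^{n-1}}\) with \(C_2\) acting on every factor; there are no induced factors. The Thom isomorphism \(\MUR\otimes\MUR\simeq\MUR\otimes(\BUR)_+\) applies here. So does the fact that \(\MUR\otimes(\BUR)_+\) is a free \(\MUR\)-module on cells \(S^{k\rho_2}\). Iterating these reduces the restriction isomorphism to the Araki/Hu--Kriz isomorphism \(\pi^{C_2}_{*\rho_2}\MUR\cong\pi_{2*}MU\).
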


\begin{theorem}[{\cite[Reduction Theorem]{HHR}}]
Thom reduction gives a weak equivalence of \(\Xin\)-modules
\[
\Xin\smashover{A}\mathbb S^{0}\simeq H\mZ.
\]
\end{theorem}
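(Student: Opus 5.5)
The plan is to construct a comparison map and check it is an equivalence by induction on the subgroup lattice of \(G=C_{2^n}\), using isotropy separation. Write \(R=\Xin\smashover{A}\mathbb S^0\). By Lemma~\ref{lem:NormOfFreeAssoc}, \(A\) is a suspension spectrum of a connected space, so \(R\) is a connective \(\Xin\)-module. One can compute its bottom homotopy Mackey functor from the cofiber of the augmentation ideal. This gives \(\mpi_0R\cong\mpi_0\Xin=\mZ\), the Burnside-ring parts being killed by the norms of the \(\bar r_i\) in positive degrees only. The zeroth Postnikov section then gives a \(\Xin\)-module map
\[
\phi\colon R\to P^0_0R\simeq H\mZ,
\]
compatible with the Thom reduction \(\Xin\to H\mZ\). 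To show \(\phi\) is an equivalence, it suffices to show \(\Phi^K\phi\) is an equivalence for every subgroup \(K\subseteq G\). Since restriction to \(K\) takes \(\Xin\) to the analogous spectrum for \(K\) (a smash of copies of \(\MU^{(\!(K)\!)}\)) and \(A\) to the corresponding twisted monoid ring, induction on \(|K|\) reduces everything to two cases: the underlying spectrum and \(\Phi^G\).

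\emph{Underlying case.} Here \(i_e^\ast\Xin\) is a smash of \(2^{n-1}\) copies of \(MU\). By the cited result of \cite{HHR}, the classes \(\gamma^j\bar r_i\) form polynomial generators of its homotopy. So \(R\) is the iterated quotient of a polynomial ring by a regular sequence of generators, and \(\pi_\ast(i_e^\ast R)=\Z\) in degree \(0\). Hence \(i_e^\ast\phi\) is an equivalence.

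\emph{Geometric fixed points.} Since \(\Phi^G\) is symmetric monoidal and commutes with relative smash products, we get
\[
\Phi^G R\simeq \Phi^{C_2}\MUR\smashover{\Phi^G A}\mathbb S^0=MO\smashover{\mathbb S^0[b_1,b_2,\dots]}\mathbb S^0 .
\]
Here \(\Phi^G N_{C_2}^G\mathbb S^0[S^{i\rho_2}]=\mathbb S^0[S^i]\), and \(b_i\) maps to the image \(h_i\) of \(\bar r_i\) in \(\pi_iMO\). We know \(\pi_\ast MO=\F_2[h_i : i\neq 2^k-1]\), and the \(\bar r_i\) can be chosen so that these \(h_i\) are the polynomial generators. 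The classes \(h_{2^k-1}\) are zero. Killing a generator produces a polynomial factor in one degree higher, while killing zero produces a free class \(\varsigma b_{2^k-1}\) in degree \(2^k\). A Künneth spectral sequence computation should therefore give \(\pi_\ast\Phi^GR\) with the size of \(\F_2[\varsigma b_{2^k-1}: k\ge1]\), or the appropriate divided/polynomial variant. This is to be compared with the known \(\pi_\ast\Phi^{G}H\mZ\): it is \(\F_2[b]\) with \(|b|=2\) for \(G=C_2\), and for larger \(G\) it is computed from \(\Phi^{C_2}\) of the \(C_2\)-fixed data via the geometric fixed point formula for \(H\mZ\).

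The main obstacle is this last step. First, the counts must match, which requires care with the Künneth/multiplicative extensions in \(\Phi^GR\). Second, \(\Phi^G\phi\) must actually be an isomorphism and not just an abstract match. My first attempt would be to show \(\Phi^G\phi\) is a ring-map-compatible surjection on \(\pi_\ast\), hitting a generator in each degree \(2^k\) (detected via the Hurewicz image in \(H\F_{2\ast}\)). A degreewise finite counting argument then upgrades surjectivity to an isomorphism. Combined with the inductive hypothesis for proper subgroups, isotropy separation finishes the proof.
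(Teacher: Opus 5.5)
The paper does not prove this statement itself; it quotes it from \cite{HHR}. Your outline follows the architecture of that argument. You use induction over subgroups via isotropy separation and treat the underlying case as a quotient by a regular sequence. You then compute \(\Phi^G R\simeq MO\smashover{\Phi^G A}\mathbb S^0\) from the images of the \(\Phi^G N_{C_2}^G\bar r_i\) in \(\pi_\ast MO\), and compare it with \(\pi_\ast\Phi^G H\mZ\cong\F_2[b]\), \(|b|=2\).

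The genuine gap is the one you flag yourself: nothing in the proposal shows that \(\Phi^G\phi\) is an isomorphism rather than a map between abstractly isomorphic objects, and the sketched route does not work as written.

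Two of your worries are not the real issue.
\begin{itemize}
\item The count is immediate. Killing the \(h_i\) with \(i\neq 2^k-1\) gives \(H\F_2\), and the \(f_{2^k-1}\) act trivially. So \(\Phi^G R\simeq H\F_2\otimes\Sigma^\infty_+\prod_{k\geq1}S^{2^k}\), which is additively exterior on classes \(e_{2^k}\) of degree \(2^k\). Its Poincar\'e series is \(\prod_{k\geq1}(1+t^{2^k})=(1-t^2)^{-1}\); a polynomial algebra on these classes would be too big. No extension problems arise.
\item Your surjectivity plan presupposes a ring structure that \(R=\Xin\smashover{A}\mathbb S^0\) does not have, and hitting classes only in degrees \(2^k\) says nothing about other degrees. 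This is repairable. Since \(\Xin\) is commutative, \(R\) is the relative tensor product over \(\Xin\) of the single quotients \(\Xin/(G\cdot\bar r_i)\). You may take \(\phi\) to be the induced map followed by the multiplication of \(H\mZ\), because maps from a connective spectrum to \(H\mZ\) are determined on \(\mpi_0\). Then external products of the \(e_{2^k}\) go to products in \(\F_2[b]\). By binary expansion it suffices that each \(e_{2^k}\) maps nontrivially, to \(b^{2^{k-1}}\).
\end{itemize}

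That last point is the missing idea. The image of \(e_{2^k}\) is the difference, in \(\pi_{2^k}\Phi^G H\mZ\), between two null-homotopies of the same map. One is the image of a null-homotopy of \(f_{2^k-1}=0\) in \(MO\). The other is \(\Phi^G\) of the unique equivariant null-homotopy of \(S^{(2^k-1)\rho_G}\to\Xin\to H\mZ\). Showing this difference is nonzero needs genuinely equivariant information about how \(N_{C_2}^G\bar r_{2^k-1}\) dies in \(H\mZ\), and you supply none.

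The mod \(2\) Hurewicz map gives no independent handle. Both sides are sums of suspensions of \(H\F_2\), and computing \(\Phi^G\phi\) on \(H\F_{2\ast}\) needs the same information as computing it on \(\pi_\ast\).

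Formal properties do not suffice either. Being an isomorphism on \(\mpi_0\) and, inductively, on all proper restrictions, together with the size count, does not force \(\Phi^G\phi\) to be an equivalence. For example, let \(X\) be the fiber of a map \(H\mZ\to\Sigma\,\tilde E\mathcal P\otimes Z\), where \(\mathcal P\) is the family of proper subgroups and \(Z=\Sigma H\F_2\oplus\Sigma^2 H\F_2\) has trivial action. Choose the map so that on \(\Phi^G\) it sends \(b\) to the bottom class of \(\Sigma^2H\F_2\). Then \(X\to H\mZ\) has all these properties, and \(\pi_\ast\Phi^G X\) is again \(\F_2\) in each even degree. But \(\Phi^G\) of the map is zero in degree \(2\).

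Finally, the inductive step at a proper \(K=C_{2^m}\) concerns \(i_K^\ast\Xin\). This is a smash of \(2^{n-m}\) copies of \(N_{C_2}^K\MUR\) with the restricted generators. So you must state and prove the result in that generality, and the \(\Phi^K\) computation then takes place over \(MO^{\otimes 2^{n-m}}\), not only in the single-\(MO\) case you treat.
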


\section{\texorpdfstring{\(\mZ\)}{Z}-homology via \texorpdfstring{\(\Xin\)}{Xin}}
\subsection{General Formula}
The Reduction Theorem immediately gives a way to describe \(H\mZ\) smashed with any \(G\)-spectrum. 

\begin{proposition}
    For any \(G\)-spectrum \(E\), we have an equivalence of \(\Xin\)-modules
    \[
    H\mZ\otimes E\simeq \Xin\smashover{A} E,
    \]
    where \(E\) is viewed as an \(A\)-module via the augmentation map \(A\to \mathbb S^0\).
\end{proposition}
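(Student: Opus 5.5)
The plan is to tensor the Reduction Theorem with \(E\) and then move \(E\) inside the relative tensor product using associativity of relative tensor products. Concretely, \(A\) is an associative algebra, \(\Xin\) is a right \(A\)-module via the map \(A\to\Xin\) built from the classes \(\br_i\), and \(\mathbb S^0\) is a left \(A\)-module via the augmentation. The Reduction Theorem gives an equivalence of \(\Xin\)-modules
\[
H\mZ\simeq \Xin\smashover{A}\mathbb S^0 .
\]
Tensoring both sides over \(\mathbb S^0\) with \(E\) gives
\[
H\mZ\otimes E\simeq \big(\Xin\smashover{A}\mathbb S^0\big)\otimes E .
\]

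The key step is to identify \((\Xin\smashover{A}\mathbb S^0)\otimes E\) with \(\Xin\smashover{A}(\mathbb S^0\otimes E)=\Xin\smashover{A}E\). Here the \(A\)-module structure on \(\mathbb S^0\otimes E\simeq E\) comes from the \(\mathbb S^0\) factor, i.e.\ via the augmentation \(A\to\mathbb S^0\). I would do this by writing the relative tensor product as the geometric realization of the two-sided bar construction \(B_\bullet(\Xin,A,\mathbb S^0)\), whose \(k\)-simplices are \(\Xin\otimes A^{\otimes k}\otimes\mathbb S^0\). The functor \(-\otimes E\) preserves geometric realizations, since the tensor product in genuine \(G\)-spectra commutes with colimits in each variable. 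Levelwise we have
\[
\Xin\otimes A^{\otimes k}\otimes\mathbb S^0\otimes E\simeq \Xin\otimes A^{\otimes k}\otimes E,
\]
and the face maps agree with those of \(B_\bullet(\Xin,A,E)\), because the last face map uses only the action of \(A\) on the \(\mathbb S^0\) factor. Realizing gives \(H\mZ\otimes E\simeq\Xin\smashover{A}E\). This equivalence is natural in \(E\), and it is \(\Xin\)-linear since \(\Xin\) acts on the left factor throughout.

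The only point needing care, and what I expect to be the main obstacle, is bookkeeping rather than mathematics. First, the tensor product must be formed in a setting where \(\otimes\) is symmetric monoidal and commutes with colimits; this holds in the \(G\)-symmetric monoidal stable \(\infty\)-category of genuine \(G\)-spectra fixed in the Notation. Second, the module structures must be the intended ones: \(E\) acquires its \(A\)-module structure only through \(\mathbb S^0\), and \(A\) need not be commutative, so one must keep \(\Xin\) as a right module and \(\mathbb S^0\) as a left module consistently. No flatness hypotheses are required, since everything is at the level of derived (\(\infty\)-categorical) tensor products.
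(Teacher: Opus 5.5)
Your proposal is correct and follows the paper's argument: the paper's proof is the one-line shuffle equivalence $H\mZ\otimes E\simeq(\Xin\smashover{A}\mathbb S^0)\otimes E\simeq\Xin\smashover{A}E$, obtained by tensoring the Reduction Theorem with $E$. Your bar-construction discussion just spells out why that shuffle holds, namely by identifying $B_\bullet(\Xin,A,\mathbb S^0)\otimes E$ levelwise with $B_\bullet(\Xin,A,E)$ and commuting $-\otimes E$ past the geometric realization.
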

\begin{proof}
    This is the shuffle equivalence
    \[
    H\mZ\otimes E\simeq (\Xin\smashover{A} \mathbb S^0)\otimes E\simeq \Xin\smashover{A} E.\qedhere    
    \]
\end{proof}

If \(E\) is an \(E_\infty\) monoid or a \(G\)-commutative monoid, then this equivalence is one of \(\Xin\otimes E\)-modules. In this case, we can use the free-forget adjunction for \(E\)-modules now to rewrite this.

\begin{corollary}
    If \(E\) is an \(E_\infty\) monoid, then we have an equivalence of \(\Xin\otimes E\)-modules
    \[
        H\mZ\otimes E\simeq (E\otimes \Xin)\smashover{E\otimes A} E.
    \]
\end{corollary}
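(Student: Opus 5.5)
The plan is to start from the preceding Proposition, which gives an equivalence of \(\Xin\)-modules \(H\mZ\otimes E\simeq \Xin\smashover{A} E\), with \(E\) an \(A\)-module through the augmentation \(A\to\mathbb S^0\). Everything will be upgraded to \(E\)-modules using two facts: when \(E\) is an \(E_\infty\) monoid, both sides carry compatible \(E\)-module structures, and base change along \(\mathbb S^0\to E\) is symmetric monoidal. Concretely, I will rewrite \(\Xin\smashover{A} E\) as a bar construction and then regroup it over \(E\).

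First, I model the relative tensor product by the two-sided bar construction:
\[
\Xin\smashover{A} E\simeq |B_\bullet(\Xin,A,E)|, \qquad B_k=\Xin\otimes A^{\otimes k}\otimes E.
\]
The \(A\)-action on \(E\) is \(A\otimes E\to \mathbb S^0\otimes E\simeq E\) via the augmentation. Using the free--forget adjunction for \(E\)-modules, each term can be rewritten as
\[
B_k\simeq (E\otimes\Xin)\smashover{E}(E\otimes A)^{\otimes_E k}\smashover{E} E.
\]
This identifies it with \(B_k(E\otimes\Xin, E\otimes A, E)\) formed in \(E\)-modules.

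Next I check that the face maps match under this identification. The multiplication on \(E\otimes A\) is the tensor product of the multiplications of \(E\) and \(A\). The action of \(E\otimes A\) on \(E\otimes \Xin\) comes from the map \(A\to\Xin\) together with the multiplication on \(E\), which uses commutativity of \(E\) to shuffle factors. The action of \(E\otimes A\) on \(E\) is \(E\otimes A\to E\otimes\mathbb S^0\to E\). Since these identifications are natural simplicial equivalences, taking geometric realization gives
\[
H\mZ\otimes E\simeq (E\otimes\Xin)\smashover{E\otimes A}E.
\]
The module structure over \(E\otimes\Xin\) is visible on both sides, because the shuffle equivalence in the Proposition is compatible with the left \(\Xin\)-action and with the \(E\)-action on the \(E\) factor.

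The main obstacle is coherence: I need the regrouping to hold as an equivalence of simplicial objects of \(E\otimes\Xin\)-modules, not just levelwise. I would handle this in the \(\infty\)-categorical setting by using the extension-of-scalars functor \(E\otimes-\colon \mathrm{Mod}_{\mathbb S^0}\to\mathrm{Mod}_E\). It is strong symmetric monoidal and preserves colimits, so it carries relative tensor products to relative tensor products:
\[
E\otimes(\Xin\smashover{A}\mathbb S^0)\simeq (E\otimes\Xin)\smashover{E\otimes A}(E\otimes \mathbb S^0).
\]
Combining this with the Reduction Theorem, \(\Xin\smashover{A}\mathbb S^0\simeq H\mZ\), and with \(E\otimes\mathbb S^0\simeq E\), gives the claim directly. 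This route sidesteps the explicit shuffle bookkeeping, and I would write the final proof this way.
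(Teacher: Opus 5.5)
Your proposal is correct and is essentially the paper's argument. The paper takes the shuffle equivalence \(H\mZ\otimes E\simeq \Xin\smashover{A}E\), notes that it is an equivalence of \(\Xin\otimes E\)-modules, and rewrites it using the free--forget adjunction for \(E\)-modules; that is the same extension of scalars \(E\otimes-\) you use, and your final version simply applies it directly to the Reduction Theorem \(\Xin\smashover{A}\mathbb S^0\simeq H\mZ\), which skips the intermediate shuffle and makes the \(E\otimes\Xin\)-module structure automatic.
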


\begin{remark}
    Since \(A\) is a wedge of spectra of the form \(G_+\otimesover{H}S^V\), the associative ring spectrum \(E\otimes A\) is always a free \(E\)-module.
\end{remark}

\begin{remark}
    This applies even in the case \(E=H\mA\), the {\EM} spectrum for the Burnside Mackey functor and hence by base-change, for any {\EM} spectrum.
\end{remark}

\subsection{\texorpdfstring{\(\Xin\)}{Xin}-orientable case}
When \(E\) is \(\Xin\)-orientable, the Thom isomorphism allows us to rewrite the first smash factor.
\begin{proposition}\label{prop:Thom Diagonal}
Given a \(\Xin\)-orientable ring spectrum \(E\), we have an equivalence of left \(E\)-modules
\[
    E\otimes \Xin\simeq E\otimes\Sigma^{\infty}_{+} \Map^{C_2}(G,\BUR).
\]
\end{proposition}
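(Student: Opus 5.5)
The plan is to run the classical Thom diagonal argument, equivariantly and after norming. Recall that \(\Xin=N_{C_2}^G\MUR\), and that \(\MUR\) is the Thom spectrum of the universal Real virtual bundle over \(\BUR\). Concretely, \(\MUR\) is the colimit of Thom spectra \(\Sigma^{-n\rho_2}\mathrm{Th}(\gamma_n\to BU_{\R}(n))\).

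First we norm this picture. Proposition~\ref{prop:SuspensionGMonoidal} has a Thom-spectrum version, since the norm is symmetric monoidal on Thom spectra: the norm of the Thom spectrum of a \(C_2\)-bundle \(\xi\to X\) is the Thom spectrum of the coinduced bundle \(\Map^{C_2}(G,\xi)\to\Map^{C_2}(G,X)\). Applying this identifies \(\Xin\) with the Thom spectrum of a \(G\)-equivariant virtual bundle \(\xi\) over \(B=\Map^{C_2}(G,\BUR)\).

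Second, we use the Thom diagonal. For any \(G\)-equivariant virtual bundle \(\xi\to B\), the diagonal \(B\to B\times B\) is covered by a bundle map \(\xi\to \xi\times 0\). This gives a map of Thom spectra
\[
\mathrm{Th}(\xi)\to \mathrm{Th}(\xi)\otimes\Sigma^\infty_+B,
\]
which makes \(\Xin\) a comodule over \(\Sigma^\infty_+B\). Tensoring with \(E\) and using the orientation \(u\colon \Xin\to E\), we form the composite
\[
E\otimes \Xin\to E\otimes \Xin\otimes\Sigma^\infty_+B\xrightarrow{1\otimes u\otimes 1} E\otimes E\otimes \Sigma^\infty_+B\xrightarrow{\mu\otimes 1} E\otimes\Sigma^\infty_+B.
\]
This composite is a map of left \(E\)-modules, since \(E\) acts on the leftmost factor throughout.

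Third, we check that this composite is an equivalence. Both sides are filtered compatibly: \(\BUR\) is filtered by the finite Real Grassmannians, so \(B\) is filtered by their coinductions, and \(\xi\) restricts along this filtration. It therefore suffices to prove the equivalence for each finite stage. There we use a relative cell (Mayer–Vietoris) induction over a \(G\)-CW structure on the base. Over a cell \(G/K\times D^m\), the bundle \(\xi\) is trivial with fiber some \(K\)-representation \(V\). The composite then reduces to the map
\[
E\otimes G_+\otimesover{K}S^V\to E\otimes G_+\otimesover{K}S^0
\]
given by multiplication by the restricted Thom class. This map is an equivalence exactly when the Thom class restricts to a unit, i.e. a generator of \(E\)-homology of \(S^V\), on each fiber.

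The main obstacle is pinning down what ``\(\Xin\)-orientable'' provides in this equivariant setting. For the fiber check above we need, for every subgroup \(K\) and every \(K\)-fixed point of \(B\), an invertible class in \(\pi^K_{V}E\), and these classes must come from \(u\). The first thing to try is to take ``orientable'' to mean a ring map \(u\colon\Xin\to E\), and then observe that its restriction to the bottom cell of each coinduced fiber is the norm of the \(C_2\)-Thom class of \(\MUR\). By the multiplicativity of norms in \(E\) (or simply by the ring map \(u\) composed with the norm structure of \(\Xin\)), this is the Thom class of the coinduced representation \(\Ind\)-type sphere. That class is invertible because it is the image of the unit under the Thom isomorphism of \(\Xin\) itself, so by naturality it remains invertible after applying \(u\).

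If only an \(E_\infty\) (rather than \(G\)-commutative) map is available, I would instead check invertibility on geometric fixed points via the induction on cells, which reduces to the same fiberwise statement. Passing to the colimit over the Grassmannian filtration then finishes the argument.
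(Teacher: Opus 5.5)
Your proposal is correct and follows the argument the paper intends. The paper gives no separate proof: it calls this the Thom diagonal and appeals to the classical case. Your three steps are exactly that argument made equivariant: realize \(\Xin\) as the Thom spectrum of the coinduced virtual bundle over \(\Map^{C_2}(G,\BUR)\), form the Thom diagonal composite, and check cellwise that the restricted Thom class is a unit.

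The unit check is simpler than your discussion of norms and geometric fixed points suggests. Each fixed-point space \(\Map^{C_2}(G,\BUR)^K\) is connected, being a product of copies of \(BO\) or \(BU\). So every fiber inclusion agrees, up to an automorphism of the sphere, with the unit \(S^0\to\Xin\), and any unital \(u\) sends this to \(1\in\pi_0^K E\).
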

This has free \(E\)-homology, essentially for the same reasons as the classical cases. A \(\Xin\)-orientation gives a Real orientation of \(i_{C_2}^{\ast}E\) by restriction and the unit of the norm-forget adjunction:
\[
    \MUR\to i_{C_2}^{\ast}\Xin.
\] 
This allows us to easily identity the \(i_{C_2}^\ast E\)-homology of \(\BUR\) and the \(E\)-homology of the coinduced \(\BUR\).

\begin{theorem}[{\cite[]{HillFreeness}}]\label{thm:Splitting Oriented Homology}
    There are classes 
    \[
        \bar{m}_i\in \pi_{i\rho_2}^{C_2}\big(\Xin\otimes \Map^{C_2}(G,\BUR)\big)
    \]
    such that the induced map
    \[
        \Xin\otimes \Big(\bigotimes_{i\geq 1} N_{C_2}^{G}\mathbb S^0[S^{i\rho_2}]\Big)\to \Xin\otimes \Map^{C_2}(G,\BUR)_+
    \]
    is an equivalence of associative \(\Xin\)-algebras.
\end{theorem}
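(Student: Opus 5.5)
The plan is to build the map first and then check it is an equivalence on homotopy, one subgroup at a time.

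\emph{Construction of the map.} By Proposition~\ref{prop:Thom Diagonal}, \(\Xin\otimes\Map^{C_2}(G,\BUR)_+\) is an associative \(\Xin\)-algebra. Here the algebra structure comes from the \(H\)-space structure on \(\BUR\), coinduced up to \(G\). Its \(C_2\)-restriction is
\[
i_{C_2}^\ast\Xin\otimes\Map^{C_2}(G,\BUR)_+,
\]
which receives a map from \(\MUR\otimes\BUR{}_+\) via the unit \(\MUR\to i_{C_2}^\ast\Xin\). I would choose the classes \(\bar m_i\) as the images of the Real analogues of the classical generators \(b_i\in\MU_{2i}BU\). These come from \(\MUR\)-homology of \(\mathbb{CP}^\infty_{\R}\to\BUR\) under the Real orientation, and they lie in degree \(i\rho_2\) because \(\MUR\wedge\mathbb{CP}^\infty_{\R}\) splits as a wedge of \(\Sigma^{i\rho_2}\MUR\). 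Each \(\bar m_i\) gives a map of associative algebras \(\mathbb S^0[S^{i\rho_2}]\to i_{C_2}^\ast(\Xin\otimes\Map^{C_2}(G,\BUR)_+)\). Norming to \(G\), composing with the counit of the norm--restriction adjunction, tensoring over \(i\), and extending \(\Xin\)-linearly gives the displayed map of associative \(\Xin\)-algebras.

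\emph{Reduction to underlying and fixed-point checks.} Both sides are \(\Xin\)-modules, and the source is free on cells \(G_+\otimes_H S^V\) by Lemma~\ref{lem:NormOfFreeAssoc} and the Snaith-type splitting. So it suffices to show the map is an equivalence on geometric fixed points \(\Phi^H\) for every \(H\le G\).

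\begin{itemize}
\item For the trivial subgroup this is the classical statement \(MU\wedge BU_+\simeq MU[b_1,b_2,\dots]\), applied to the \(2^{n-1}\) tensor factors of \(\Map^{C_2}(G,BU)\simeq BU^{\times 2^{n-1}}\).
\item For general \(H\), the norm and coinduction formulas are compatible with \(\Phi^H\) (the diagonal formula). This reduces the check to \(\Phi^{C_2}\) of a norm from \(C_2\) to \(H\).
\item On that side, \(\Phi^{C_2}\MUR=MO\) and \(\Phi^{C_2}\) of \(\BUR{}_+\) is \(BO_+\). The statement becomes \(MO\wedge BO_+\simeq MO[\Phi\bar m_i]\) with \(\Phi\bar m_i\) in degree \(i\), which is the classical computation of \(H\F_2\)-homology of \(BO\) transported through \(MO\simeq H\F_2[\dots]\).
\end{itemize}

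I expect the main obstacle to be justifying the middle step: that \(\Phi^H\) of the source, a norm of free associative algebras, matches \(\Phi^H\) of the coinduced space as algebras. This requires the explicit description \(\Phi^H N_{C_2}^G X\simeq\bigotimes\Phi^{C_2}X\) when \(C_2\le H\), and \(\Phi^H\) vanishing otherwise for \(\Xin\)-modules (since \(a_\sigma\)-inverted \(\Xin\) restricted to \(H\) not containing \(C_2\) is trivial). For cyclic \(2\)-groups every nontrivial \(H\) contains \(C_2\), so only the trivial group and the subgroups containing \(C_2\) occur, and the check above covers all cases.

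A fallback is to use the slice or homotopy fixed-point spectral sequence instead. Alternatively, one can argue via the Thom isomorphism \(\Xin\otimes\Map^{C_2}(G,\BUR)_+\simeq\Xin\otimes\Xin\) together with the known computation \(\pi_{\ast\rho_2}^{C_2}(\Xin\otimes\Xin)=\pi_{\ast\rho_2}\Xin[G\cdot\bar m_i]\) of \cite{HHR}. One then concludes by the \(\mRO\)-graded freeness criterion of \cite{HillFreeness}: a map of \(\Xin\)-modules between free modules inducing an isomorphism on the \(\ast\rho_2\)-graded \(C_2\)-homotopy Mackey functor is an equivalence.
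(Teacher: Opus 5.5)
Your argument is correct, but it is organized differently from the source the paper relies on. The paper gives no proof here beyond citing \cite{HillFreeness}, where the result is an instance of freeness being preserved by norms: one takes the Real ($n=1$) splitting $\MUR\otimes\bigotimes_i\mathbb S^0[S^{i\rho_2}]\simeq\MUR\otimes\BUR{}_+$ and applies $N_{C_2}^G$. The norm is symmetric monoidal, carries $\Sigma^\infty_+\BUR$ to $\Sigma^\infty_+\Map^{C_2}(G,\BUR)$ (Proposition~\ref{prop:SuspensionGMonoidal}), and preserves equivalences.

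Your construction in fact produces exactly this normed map. By the triangle identity for the norm--restriction adjunction, the adjoint of $\bar m_i$ is $N_{C_2}^G$ of the $C_2$-level map $\mathbb S^0[S^{i\rho_2}]\to\MUR\otimes\BUR{}_+$. Hence the whole map is $N_{C_2}^G f$ for the $C_2$-level map $f$. You should say this explicitly, because it is precisely what justifies your middle step, namely that for $C_2\le H$ the map on $\Phi^H$ is the $|G/H|$-fold smash power of $\Phi^{C_2}f$. Once it is said, the check at general $H$ becomes redundant. Your $\Phi^e$ and $\Phi^{C_2}$ computations then amount to a self-contained proof of the $n=1$ case, and the norm does the rest.

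What your route buys is that the Real input is derived from the classical computations of $MU_*BU$ and $MO_*BO$ rather than quoted. The cost is that you must check that the classes $\Phi^{C_2}\bar m_i$ are polynomial generators of $MO_*BO$. This follows by applying $\Phi^{C_2}$ to the Kronecker pairing between your Real classes and the powers of the Real orientation: since $\Phi^{C_2}$ of that orientation is the $MO$-orientation of $\mathbb{RP}^\infty$, the images are the standard dual classes.

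Two small slips. First, testing on geometric fixed points works for any map of $G$-spectra, not because the source is free. Second, $\Phi^H$ does not vanish for $H\not\supseteq C_2$. In $C_{2^n}$ the only such $H$ is the trivial group, where $\Phi^e$ is the underlying spectrum, and you treat that case separately anyway.
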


Base-changing along the map \(\Xin\to E\) and using the Thom isomorphism allows us to deduce the general case.

\begin{corollary}\label{cor:Homotopy of EXin}
    For any \(\Xin\)-orientable \(G\)-spectrum \(E\), the associative \(E\)-algebra map
    \[
        E\otimes \Big(\bigotimes_{i\geq 1} N_{C_2}^{G} \mathbb S^0[S^{i\rho_2}]\Big)\to E\otimes\Xin
    \]
    is an equivalence.
\end{corollary}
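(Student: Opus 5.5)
The plan is to deduce the corollary from Theorem~\ref{thm:Splitting Oriented Homology} by base change along the orientation, and then to use the Thom isomorphism of Proposition~\ref{prop:Thom Diagonal} to pass from the coinduced \(\BUR\) to \(\Xin\) itself. Fix a \(\Xin\)-orientation, i.e.\ a ring map \(\Xin\to E\). Write
\[
A=\bigotimes_{i\geq 1} N_{C_2}^{G}\mathbb S^0[S^{i\rho_2}]
\]
as in the introduction.

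Theorem~\ref{thm:Splitting Oriented Homology} gives an equivalence of associative \(\Xin\)-algebras \(\Xin\otimes A\to \Xin\otimes\Map^{C_2}(G,\BUR)_+\). I apply the functor \(E\smashover{\Xin}(-)\), which preserves equivalences and associative algebra structures since \(E\) is an \(\Xin\)-algebra. Using \(E\smashover{\Xin}(\Xin\otimes Y)\simeq E\otimes Y\), this yields an equivalence of associative \(E\)-algebras
\[
E\otimes A\xrightarrow{\ \simeq\ } E\otimes \Sigma^\infty_+\Map^{C_2}(G,\BUR).
\]
Concretely, this map is defined by the base-changed classes \(\bar m_i\in\pi^{C_2}_{i\rho_2}\big(E\otimes\Map^{C_2}(G,\BUR)\big)\), normed up and multiplied together.

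Next I compose with the Thom isomorphism \(E\otimes\Sigma^\infty_+\Map^{C_2}(G,\BUR)\simeq E\otimes\Xin\) of Proposition~\ref{prop:Thom Diagonal}. This is built from the Thom diagonal and the orientation \(\Xin\to E\): the map
\[
E\otimes\Xin\to E\otimes\Xin\otimes\Sigma^\infty_+\Map^{C_2}(G,\BUR)\to E\otimes\Sigma^\infty_+\Map^{C_2}(G,\BUR)
\]
is an equivalence of left \(E\)-modules. The resulting composite \(E\otimes A\to E\otimes\Xin\) is then an equivalence of \(E\)-modules. What remains is to identify this composite with the associative \(E\)-algebra map in the statement. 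That map is the one induced by the classes \(1\otimes\bar m_i\) (equivalently, the \(E\)-Hurewicz images), extended through the norm and multiplication using the \(G\)-commutative structure of \(E\otimes\Xin\).

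The main obstacle is this multiplicativity. The Thom isomorphism is a priori only a module equivalence, whereas the map in the statement is an algebra map. My approach is to observe that the Thom isomorphism is multiplicative. The Thom diagonal \(\Xin\to\Xin\otimes\Sigma^\infty_+\Map^{C_2}(G,\BUR)\) is a map of \(G\)-commutative rings, because \(\Xin\) is the Thom spectrum of an \(E_\infty\)/\(G\)-commutative map of coinduced \(\BUR\). Consequently, tensoring with \(E\) and composing with the multiplication \(E\otimes\Xin\to E\) gives a ring map.

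Given multiplicativity, it suffices to check that the images of the classes \(\bar m_i\) match. This holds by the way the \(\bar m_i\) are defined in \cite{HillFreeness}, namely as Thom-isomorphism transports of the generators. By the universal property of the free algebra \(N_{C_2}^G\mathbb S^0[S^{i\rho_2}]\) (via restriction to \(C_2\) and the counit of the norm), the two algebra maps \(E\otimes A\to E\otimes\Xin\) therefore agree. Hence the map in the statement is an equivalence.
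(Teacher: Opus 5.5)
Your proposal follows the paper's route exactly: base-change Theorem~\ref{thm:Splitting Oriented Homology} along the orientation \(\Xin\to E\), then transport along the Thom isomorphism of Proposition~\ref{prop:Thom Diagonal}; your added multiplicativity check, which the paper leaves implicit, is sound when \(E\) and the orientation are commutative (the setting of Theorem~\ref{thm: Tor SS for Homology}), since otherwise \(E\otimes\Xin\to E\) need not be a ring map. One correction: drop the parenthetical ``equivalently, the \(E\)-Hurewicz images.'' The map is determined by the transported classes \(\bar m_i\), whereas the Hurewicz images of the \(\bar r_i\) define a different map \(E\otimes A\to E\otimes\Xin\), which, as the paper notes right after the corollary, is not this equivalence (for \(E=H\mF_2\) and \(G=C_2\) it sends \(\bar r_{2^k-1}\) to \(0\)).
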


Note that the associative \(E\)-algebra described is actually just \(E\otimes A\). The induced \(A\)-algebra structure on \(E\otimes\Xin\), however, is not the one coming from \(\Xin\) itself. Instead, the self-map in the homotopy category induced by
\[
    E\otimes A\xrightarrow{E\otimes \iota} E\otimes \Xin\xleftarrow{\simeq} E\otimes \Big(\bigotimes_{i\geq 1} N_{C_2}^{G} \mathbb S^0[S^{i\rho_2}]\Big)
\]
is not the identity, however. This \(E\)-algebra map is the one determined by the \(C_2\)-equivariant maps
\[
S^{i\rho_2}\to i_{C_2}^{\ast}\Xin\to i_{C_2}^{\ast}(E\otimes \Xin).
\]
In other words, the map from \(E\otimes A\) is picking out the \(E\)-based Hurewicz image in \(E\otimes\Xin\).

\begin{notation}
    When we want to stress that the algebra structure is this Hurewicz one, we will denote \(A\) by \(A^h_n\).
\end{notation}

\begin{theorem}\label{thm: Tor SS for Homology}
    Let \(E\) be an \(E_\infty\)-monoid, and assume \(E\) is \(\Xin\)-oriented. Then we have an Adams-graded spectral sequence with \(E_2\)-term
    \[
        E_2^{s,V}=\Tor_{E_{\mstar}[G\cdot\bar{r}_1,\dots]}^{-s,V}\big(E_{\mstar}[G\cdot \bar{m}_1,G\cdot\bar{m}_2,\dots] ,{E_\mstar}\big)
    \]
    converging to \(\pi_{V-s}\big(H\mZ\otimes E\big)\).
\end{theorem}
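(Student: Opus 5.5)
The plan is to start from the equivalence of \(\Xin\otimes E\)-modules
\[
H\mZ\otimes E\simeq (E\otimes\Xin)\smashover{E\otimes A} E
\]
given by the Corollary above, and to read off the answer from a Künneth (Tor) spectral sequence for this relative tensor product.

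First, rewrite both \(E\otimes A\)-modules in terms of \(E\otimes A\) itself.

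\begin{itemize}
\item By Corollary~\ref{cor:Homotopy of EXin}, \(E\otimes\Xin\) is equivalent, as an associative \(E\)-algebra, to \(E\otimes A\). Under this identification its \(E\otimes A\)-module structure becomes restriction along the Hurewicz algebra map \(E\otimes A^h_n\to E\otimes A\). That map is determined by the images of the \(\bar{r}_i\).
\item The module \(E\) carries the \(A\)-action through the augmentation \(A\to\mathbb S^0\).
\end{itemize}

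So the object to compute is \((E\otimes A)\smashover{E\otimes A^h_n}E\). Note that all three of \(E\otimes A\), \(E\otimes A^h_n\) and \(E\) are algebras over \(E\) or modules over it.

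Second, I would set up the Künneth spectral sequence for a relative tensor product over the associative \(E\)-algebra \(E\otimes A^h_n\). I would build it \(\mRO\)-graded, in the spirit of Lewis--Mandell.

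\begin{itemize}
\item Take a resolution of \(E\) by free \(E\otimes A^h_n\)-modules, that is, sums of shifts \(E\otimes A^h_n\otimes (G_+\otimesover{H}S^V)\). Realize it geometrically, for example through the bar construction \(B(E\otimes A,E\otimes A^h_n,E)\).
\item Filter by simplicial degree. This gives a spectral sequence whose \(E_1\)-term is the bar complex on homotopy Mackey functors. Its \(E_2\)-term is an \(\mRO\)-graded Tor.
\item Converge to \(\pi_{V-s}\) of the realization. Convergence is automatic, since the filtration is exhaustive and nonnegatively indexed.
\end{itemize}

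Third, identify the homotopy of the algebras involved.

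\begin{itemize}
\item Because \(A\) is a wedge of spectra \(G_+\otimesover{H}S^V\), the Remark above shows \(E\otimes A\) is \(E\)-free.
\item Its \(\mRO\)-graded homotopy is the free twisted polynomial algebra \(E_{\mstar}[G\cdot x_1,\dots]\). The \(x_i\) are the norms of the generators of \(\mathbb S^0[S^{i\rho_2}]\).
\item In the source \(A^h_n\) I call the generators \(\bar{r}_i\). In the target, through Theorem~\ref{thm:Splitting Oriented Homology}, I call them \(\bar{m}_i\).
\end{itemize}

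With these identifications the \(E_2\)-term is
\[
\Tor_{E_{\mstar}[G\cdot\bar r_1,\dots]}\big(E_{\mstar}[G\cdot\bar m_1,\dots],E_{\mstar}\big),
\]
with the module structure on the first argument given by the Hurewicz images of the \(\bar r_i\).

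The main obstacle is the Künneth step. The rings involved are only associative, and the \(E_1\)-identification requires the homotopy of iterated tensors to be computed by the box product of homotopy Mackey functors. That holds precisely because every term is a sum of shifted induced modules \(G_+\otimesover{H}S^V\). Equivalently, \(\pi_{\mstar}\) turns \(E\otimes A^h_n\otimes M\), for \(M\) a wedge of such shifted induced spectra, into an \(\mRO\)-graded free module.

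I would check this freeness carefully using the \(\mRO\)-grading of Angeltveit--Bohmann. This is the reason the grading must be \(\mstar\) rather than \(\star\): the norms \(N_{C_2}^G\) produce induced cells \(G\cdot\bar r_i\), which cannot be expressed in \(RO(G)\)-gradings. I would also need care to ensure the Tor is computed in the correct abelian category of \(\mRO\)-graded Mackey modules, where free objects are projective. Once that is in place, the rest is formal.
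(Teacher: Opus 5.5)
Your proposal is correct and follows the paper's route. The paper likewise obtains the spectral sequence as the K\"unneth spectral sequence for the balanced smash product \((E\otimes\Xin)\smashover{E\otimes A}E\), using Corollary~\ref{cor:Homotopy of EXin} for the homotopy of the first factor and the definition of \(A\) for the ground ring. Your extra details (the bar-construction model, convergence from the exhaustive nonnegative filtration, and the \(\mRO\)-graded freeness needed to identify the \(E_1\)-term) just make explicit what the paper leaves implicit.
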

\begin{proof}
    This is the K\"unneth spectral sequence associated to writing \(H\mZ\otimes E\) as a balanced smash product:
    \[
        H\mZ\otimes E\simeq (E\otimes\Xin)\smashover{E\otimes A} E.
    \]
    Corollary~\ref{cor:Homotopy of EXin} identifies the homotopy of the first smash factor. The homotopy of the ground ring is the definition of the ring over which we are taking \(\Tor\). The result follows.
\end{proof}

In the case that \(E=H\mF_2\), then we are working in the category of \(H\mF_2\)-modules which is the derived category of \(\mF_2\)-modules \cite{SchwedeShipley}. In this case, the {\EM} functor is [derived] strong symmetric monoidal, and the K\"unneth spectral sequence here collapses. For \(n\geq 2\), this is a \(\Tor\)-computation in an abelian category with which we have little familiarity. For \(n=1\), however, we deduce the additive dual Steenrod algebra with easy, since the K\"unneth spectral sequence is just a form of the Lewis--Mandell \(RO(C_2)\)-graded one.

\begin{corollary}\label{cor: Hu--Kriz Result}
    We have an isomorphism of \(RO(C_2)\)-graded modules
    \[
        \mathcal A_{\star}\cong \mathbb M_2[\bar{x}_1,\dots]\otimesover{\mathbb M_2} E_{\mathbb M_2}(\tau_0,\tau_1,\dots),
    \]
    where \(|\bar{\xi}_n|=|\bar{v}_n|=(2^{n}-1)\rho_2\), and \(|\tau_m|=|\bar{v}_m|+1=2^{n-1}+(2^{n-1}-1)\varsigma.\)
\end{corollary}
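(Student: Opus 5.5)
We specialize Theorem~\ref{thm: Tor SS for Homology} to \(n=1\) and \(E=H\mF_2\). Here \(G=C_2\), \(\Xin=\MUR\), and \(H\mF_2\) is Real oriented. The norms \(N_{C_2}^{C_2}\) are trivial, so \(A=\mathbb S^0[\bar r_1,\bar r_2,\dots]\) is an honest free associative algebra on classes in degrees \(i\rho_2\). The \(\mRO\)-grading reduces to the \(RO(C_2)\)-grading, since the only induced representations come from the trivial subgroup and these are handled by the Mackey structure. The \(E_2\)-term is therefore
\[
\Tor^{-s,\star}_{\mathbb M_2[\bar r_1,\bar r_2,\dots]}\big(\mathbb M_2[\bar m_1,\bar m_2,\dots],\mathbb M_2\big).
\]
Because \(H\mF_2\)-modules form the derived category of \(\mF_2\)-modules, and both rings are free \(\mathbb M_2\)-modules, this spectral sequence collapses additively (this is the Lewis--Mandell \(RO(C_2)\)-graded Künneth spectral sequence). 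So it suffices to compute the \(\Tor\) group as an \(RO(C_2)\)-graded \(\mathbb M_2\)-module.

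Next we need the Hurewicz images of the \(\bar r_i\) in \(\pi_\star(H\mF_2\otimes\MUR)=\mathbb M_2[\bar m_1,\dots]\). Following the classical computation, the \(\bar r_i\) can be chosen so that, modulo decomposables and \(2\), \(\bar r_{2^k-1}\) maps to a polynomial generator (up to a unit), say \(\bar m_{2^k-1}\), while \(\bar r_j\) for \(j\neq 2^k-1\) maps to zero modulo \(2\) and decomposables. Precisely, we choose generators \(\bar r_i\) and the corresponding \(\bar m_i\) so that the map of polynomial algebras sends \(\bar r_j\mapsto \bar m_j\) for \(j\) not of the form \(2^k-1\) and \(\bar r_{2^k-1}\mapsto 0\). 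This uses the Real analogue of the fact that the Hurewicz map \(MU_*\to H\F_{2*}MU\) hits the generators \(m_j\) exactly for \(j\ne 2^k-1\), transported via the \(\rho_2\)-doubling (the underlying map is the classical one, and the geometric fixed points or restriction detect the rest). I expect establishing this change of generators \(RO(C_2)\)-gradedly over \(\mathbb M_2\) to be the main obstacle. What I would try first is to work over \(\Z[\bar m_i]\subset\pi_{\ast\rho_2}\), where the question is purely about the underlying non-equivariant formal group law, and then base change to \(\mathbb M_2\), since everything is free and concentrated in \(\ast\rho_2\) degrees.

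With this choice, the \(\mathbb M_2[\bar r]\)-module \(\mathbb M_2[\bar m]\) is a tensor product of two pieces. The first is \(\mathbb M_2[\bar m_j : j\neq 2^k-1]\), which is free over \(\mathbb M_2[\bar r_j: j\neq 2^k-1]\). The second is \(\mathbb M_2[\bar m_{2^k-1}]\), on which the \(\bar r_{2^k-1}\) act by zero. The Koszul resolution then gives
\[
\Tor \cong \mathbb M_2[\bar m_{2^k-1}]\otimes_{\mathbb M_2} E_{\mathbb M_2}(\sigma\bar r_{2^k-1}),
\]
with \(\sigma\bar r_{2^k-1}\) in homological degree \(1\) and internal degree \((2^k-1)\rho_2\), so that it contributes a class of total degree \((2^k-1)\rho_2+1\). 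We set \(\bar\xi_k=\bar m_{2^k-1}\) (written \(\bar x_k\) in the statement) and \(\tau_{k}=\sigma\bar r_{2^{k}-1}\). Including \(k=0\), via \(\bar r_0\) corresponding to \(2\), gives \(\tau_0\) in degree \(1\). This yields the stated additive isomorphism. The degree bookkeeping \(|\tau_m|=|\bar v_m|+1\) follows because \(\bar r_{2^m-1}\) maps to \(\bar v_m\) modulo decomposables in \(\pi_\star\BPR\).
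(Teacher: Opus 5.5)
Your argument is essentially the paper's: you specialize the Tor spectral sequence of Theorem~\ref{thm: Tor SS for Homology} to \(n=1\), \(E=H\mF_2\), use its collapse, and arrange the Hurewicz map to send \(\br_j\mapsto\bar m_j\) for \(j\neq 2^k-1\) and \(\br_{2^k-1}\mapsto 0\). You then read off \(\bar\xi_k=\bar m_{2^k-1}\) and \(\tau_k\) as the homology suspension of \(\br_{2^k-1}\), with \(\tau_0\) coming from \(2\); your reduction of the choice of generators to the classical underlying Hurewicz map in \(\ast\rho_2\)-degrees is a useful detail that the paper leaves implicit. One slip to fix: your first sentence on Hurewicz images has the classical fact reversed. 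It is the \(\br_j\) with \(j\neq 2^k-1\) that hit generators, and the \(\br_{2^k-1}\) that vanish modulo \(2\) and decomposables, as your ``Precisely'' sentence already states correctly.
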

\begin{proof}
    The Hurewicz map takes the form
    \[
        \br_i\mapsto\mycases{
            \bar{m}_i & i\neq 2^n-1 \forall n \\
            0 & \text{ otherwise.}
        }
    \]
    This gives the desired result, where the elements \(\bar{\xi}_n\) are represented by \(\bar{m}_{2^{n}-1}\) and where \(\bar{\tau}_m\) is represented by the homology suspension of \(\bar{r}_{2^m-1}\).
\end{proof}

\section{The relative dual Steenrod algebra: additive structure}\label{sec:Additive}
\subsection{The relative smash product with \texorpdfstring{\(H\mZ\)}{HZ}}

Using the presentation of \(H\mZ\) as a \(\Xin\)-module, we can also compute the various relative smash products of \(G\)-spectra with \(H\mZ\).

\begin{proposition}\label{prop:GeneralRelativeCase}
    Given any \(\Xin\)-module \(E\), we have equivalences
    \[
        E\smashover{\Xin} H\mZ\simeq E\smashover{A} \mathbb S^0,
    \]
    where \(E\) is an \(A\)-module via pulling back along \(A\to\Xin\).
\end{proposition}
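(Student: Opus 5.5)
Substitute the Reduction Theorem into the second factor and then cancel the \(\Xin\) by associativity of relative tensor products. The Reduction Theorem gives \(H\mZ\simeq \Xin\smashover{A}\mathbb S^0\) as \(\Xin\)-modules. Here \(\Xin\) is an \((\Xin,A)\)-bimodule: \(\Xin\) is commutative, so left multiplication by \(\Xin\) commutes with the right \(A\)-action pulled back along the map of associative rings \(A\to\Xin\). This gives
\[
E\smashover{\Xin}H\mZ\simeq E\smashover{\Xin}\big(\Xin\smashover{A}\mathbb S^0\big)\simeq \big(E\smashover{\Xin}\Xin\big)\smashover{A}\mathbb S^0\simeq E\smashover{A}\mathbb S^0 .
\]
The middle equivalence is associativity of the relative tensor product (bar constructions). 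The last uses the unit equivalence \(E\smashover{\Xin}\Xin\simeq E\), which is compatible with right \(A\)-actions. On the right, \(E\) is a right \(A\)-module by restriction along \(A\to\Xin\), which is exactly the module structure in the statement.

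I would carry this out in three steps.

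\emph{Step 1.} Fix models. Work in the \(\infty\)-category of \(G\)-spectra, with \(\Xin\) an \(E_\infty\) (indeed \(G\)-commutative) algebra and \(A\to\Xin\) a map of \(E_1\)-algebras. Use Lurie-style relative tensor products of bimodules, so that associativity and unitality are available as standard facts.

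\emph{Step 2.} Check that the Reduction Theorem's equivalence is one of left \(\Xin\)-modules when the left side is viewed as the bimodule tensor \(\Xin\smashover{A}\mathbb S^0\). This is how the paper states it: Thom reduction is a map of \(\Xin\)-modules.

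\emph{Step 3.} Apply associativity and then the unit equivalence.

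The only point needing care is Step 2 together with a compatibility. The right \(A\)-module structure on \(\Xin\) used in the Reduction Theorem must be the one coming from the map \(A\to\Xin\) built from the classes \(\bar r_i\) and the norm–counit construction. The proposition pulls \(E\) back along the same map, so I expect no real obstacle: this is a matter of bookkeeping, namely that both appearances use the same algebra map. Everything else is formal.

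I would also note naturality in \(E\), which the later computations will want. If \(E\) is a commutative \(\Xin\)-algebra, the equivalence is one of \(E\)-modules, since every step is \(E\)-linear on the left.
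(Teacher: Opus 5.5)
Your proposal is correct and follows the paper's own argument: substitute the Reduction Theorem \(H\mZ\simeq\Xin\smashover{A}\mathbb S^0\) into the second factor, then cancel \(\Xin\) using the change-of-rings equivalence, i.e.\ associativity and unitality of relative tensor products. The points you check explicitly (the \((\Xin,A)\)-bimodule structure, that the same map \(A\to\Xin\) is used on both sides, and naturality in \(E\)) are sound and are left implicit in the paper.
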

\begin{proof}
    This is the change-of-rings isomorphism. We have natural equivalences
    \[
        E\otimesover{\Xin}H\mZ\simeq E\otimesover{\Xin}(\Xin\otimesover{A}\mathbb S^0)\simeq E\otimesover{A}\mathbb S^0.\qedhere
    \]
\end{proof}

In the case that \(E=H\mZ\) itself, then we can more easily further rewrite this, since the \(A\)-module structure is necessarily trivial.

\begin{theorem}\label{thm:Relative Homology as Spectrum}
    There is an equivalence of \(H\mZ\)-modules
    \[
        H\mZ\smashover{\Xin} H\mZ\simeq
        H\mZ\otimes\Sigma^{\infty}_{+} \left(\prod_{i=1}^{\infty} \Map^{C_2}(G,S^{i\rho_2+1})\right).
    \]
\end{theorem}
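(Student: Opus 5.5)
We start from Proposition~\ref{prop:GeneralRelativeCase} with \(E=H\mZ\), which gives
\[
H\mZ\smashover{\Xin} H\mZ\simeq H\mZ\smashover{A}\mathbb S^0 .
\]
The key observation is that the \(A\)-module structure on \(H\mZ\) obtained by pulling back along \(A\to\Xin\to H\mZ\) is the augmentation. The composite \(A\to H\mZ\) is a map of associative algebras determined by the \(C_2\)-equivariant classes \(\bar r_i\in\pi^{C_2}_{i\rho_2}H\mZ\). These groups vanish: \(\pi^{C_2}_{i\rho_2}H\mZ=H_{C_2}^{-i\rho_2}(\ast;\mZ)=0\) for \(i\geq 1\), since \(S^{i\rho_2}\) is a \(C_2\)-CW complex whose cells all have dimension at least \(i>0\). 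By the adjunctions defining \(A\) (free associative algebra, then norm), a map of associative algebras \(A\to H\mZ\) corresponds to the tuple of classes \(\bar r_i\). So the map is homotopic, as an algebra map, to the composite \(A\to\mathbb S^0\to H\mZ\).

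To make this precise, I would argue at the level of \(E_1\)-algebras. The space of \(E_1\)-maps \(\mathbb S^0[S^{i\rho_2}]\to i^\ast_{C_2}H\mZ\) is the mapping space \(\Map^{C_2}(S^{i\rho_2},H\mZ)\). Its components are \(\pi^{C_2}_{i\rho_2}H\mZ=0\), so the space is connected. The norm–restriction adjunction for associative algebras then transfers this to maps \(N_{C_2}^G\mathbb S^0[S^{i\rho_2}]\to H\mZ\). Tensoring over \(i\) (the coproduct is taken in \(E_1\)-algebras after restricting; alternatively, work with the \(G\)-commutative target so that tensor products become coproducts) shows that the \(A\)-module \(H\mZ\) is equivalent to \(H\mZ\otimes\mathbb S^0\) with trivial \(A\)-action.

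Granting this, we obtain
\[
H\mZ\smashover{A}\mathbb S^0\simeq H\mZ\otimes\big(\mathbb S^0\smashover{A}\mathbb S^0\big).
\]
We then identify \(\mathbb S^0\smashover{A}\mathbb S^0\). Lemma~\ref{lem:NormOfFreeAssoc} with \(X=S^{i\rho_2}\) (pointed) gives \(N_{C_2}^G\mathbb S^0[\Sigma^\infty S^{i\rho_2}]\). Strictly, the lemma needs \(\Sigma^\infty_+\) of a space, so I would apply it to \(\Sigma^\infty S^{i\rho_2}\), a reduced version of James. This yields \(\Sigma^\infty_+\Omega\Map^{C_2}(G,S^{i\rho_2+1})\). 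Taking the tensor product over \(i\) gives \(\Sigma^\infty_+\Omega\prod_i\Map^{C_2}(G,S^{i\rho_2+1})\), by the strong monoidality of \(\Sigma^\infty_+\) as in Lemma~\ref{lem:SmashofFrees}. The infinite product is handled as a colimit of finite products, which is compatible with \(\Sigma^\infty_+\) by connectivity. Lemma~\ref{lem:SmashOverFree} (the bar construction, that is, the equivariant Eilenberg–Moore collapse \(B\Omega Y\simeq Y\)) then gives \(\mathbb S^0\smashover{A}\mathbb S^0\simeq\Sigma^\infty_+\prod_i\Map^{C_2}(G,S^{i\rho_2+1})\). This requires the product to be \(G\)-connected, meaning every fixed-point space is path connected. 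That holds because the fixed points of \(\Map^{C_2}(G,S^{i\rho_2+1})\) under \(K\subseteq G\) are products of fixed points of \(S^{i\rho_2+1}\), which are spheres of dimension at least \(i+1\geq 2\).

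The main obstacle is the triviality step. One needs a homotopy of module structures, not merely that the classes vanish in \(\pi_\star\). This is exactly where the freeness of \(A\) as an associative algebra does the work, so I would phrase it entirely through the free/norm adjunction mapping spaces.
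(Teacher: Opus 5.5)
Your proof is correct. It has the same skeleton as the paper's:
\begin{enumerate}
\item change of rings to \(H\mZ\smashover{A}\mathbb S^0\);
\item showing the \(A\)-action on \(H\mZ\) is trivial;
\item the James splitting and the bar equivalence \(\mathbb S^0\smashover{\Sigma^\infty_+\Omega X}\mathbb S^0\simeq\Sigma^\infty_+X\).
\end{enumerate}
The genuine difference is in step 2.

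The paper argues via the slice filtration. \(H\mZ\) is a zero slice and the augmentation ideal of \(A\) is slice positive, so any map \(A\to H\mZ\) factors through \(A\to\mathbb S^0\to P^0(\mathbb S^0)\). This controls every map out of \(A\) at once. Read at the level of \(E_1\)-maps, it implicitly uses that the localization \(A\to P^0A\simeq P^0\mathbb S^0\) is multiplicative.

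You instead deform the specific map to the augmentation using only \(\pi^{C_2}_{i\rho_2}H\mZ=0\). This is more elementary and needs only that vanishing in the target, not that the target be a zero slice. The price is that it depends on how \(A\to\Xin\) is built from the classes \(\bar r_i\).

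Two phrases in your justification are wrong, though the argument survives.
\begin{itemize}
\item There is no norm--restriction adjunction for \(E_1\)-algebras.
\item \(\otimes\) is not the coproduct of \(E_1\)-algebras, and a commutative target does not make it one, since \(A\) itself is not commutative.
\end{itemize}
So \(E_1\)-maps \(A\to H\mZ\) do not correspond to tuples of classes. What you actually need, and what is true, is the following. The map in question is the value at \((\bar r_i)\) of the natural construction
\((f_i)\mapsto\mu\circ\bigotimes_i\big(\epsilon\circ N_{C_2}^G f_i\big)\).
Here \(\epsilon\colon N_{C_2}^G i_{C_2}^\ast H\mZ\to H\mZ\) is the counit, which exists because \(H\mZ\) is \(G\)-commutative, and \(\mu\) is the multiplication. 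Naturality of \(\epsilon\) and \(\mu\) along the \(G\)-commutative map \(\Xin\to H\mZ\) identifies the composite \(A\to\Xin\to H\mZ\) with this construction.

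The construction is defined on the product over \(i\) of the spaces of \(C_2\)-equivariant maps \(S^{i\rho_2}\to H\mZ\). That space is connected by your computation, and the construction sends the zero tuple to the augmentation. Hence the two \(E_1\)-maps, and so the two \(A\)-module structures, are equivalent.

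Your use of the reduced James equivalence \(\mathbb S^0[\Sigma^\infty X]\simeq\Sigma^\infty_+\Omega\Sigma X\) is the right reading of the paper's lemmas. So is your fixed-point connectivity check for the infinite product.
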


\begin{proof}
Proposition~\ref{prop:GeneralRelativeCase} gives an equivalence of \(H\mZ\)-modules
\[
H\mZ\smashover{\Xin}H\mZ\simeq B(H\mZ,A,\mathbb S^0).
\]
The map \(\Xin\to H\mZ\) is a ring map, and hence the module structure is induced from the associative ring map
\[
A\to H\mZ.
\]
Since \(H\mZ\) is a zero-slice and since all of the summands of \(A\) beyond the copy of \(\mathbb S^{0}\) are slice positive, any map
\[
A\to H\mZ
\]
factors as
\[
A\to \mathbb S^{0}\to P^0(\mathbb S^0)\to H\mZ.
\]
In particular, we have a weak equivalence
\[
H\mZ\smashover{A}\mathbb S^{0}\simeq H\mZ\otimes\big(\mathbb S^{0}\smashover{A}\mathbb S^{0}\big).
\]
Lemma~\ref{lem:SmashofFrees} allows us to describe \(A\) itself as a suspension spectrum:
\[
A\simeq \Sigma^{\infty}_{+}\Omega\left(\prod_{i\in\mathbb N}\Map^{C_{2}}(G,S^{i\rho_{2}+1})\right).
\]
Lemma~\ref{lem:SmashOverFree} and Corollary~\ref{cor:SmashOverFrees} then gives the relative smash product
\[
\mathbb S^{0}\smashover{A}\mathbb S^{0}\simeq \Sigma^{\infty}_{+}\left(\prod_{i\in\mathbb N}\Map^{C_{2}}(G,S^{i\rho_{2}+1})\right),
\]
which is the desired result.
\end{proof}

The reduction modulo \(2\) is also easily determined from this: we basechange the left factor along the map \(H\mZ\to H\m{\F}_{2}\). Smashing with the mod \(2\) Moore spectrum then immediately gives Theorem~\ref{thm:modTwoAdditiveRelativeSteenrod}.

\subsubsection*{More general quotients}
This same analysis works for the relative homology of more general quotients of the form
    \[
        \Xin/(G\cdot \bar{s}_{i_1},\dots\mid i_j\in I),
    \]
where the elements \(G\cdot\bar{s}_{i_1},\dots\) are algebraically independent elements in \(\pi_{\ast\rho_2}\Xin\) with
    \[
        |s_{i_j}|=i_j\rho_2,
    \] 
as considered in \cite{MR4877605}. We get
    \[
        H\mZ\otimesover{\Xin} \Xin/(G\cdot \bar{s}_{i_1},\dots)\simeq H\mZ\otimes\Sigma^{\infty}_+\left(\prod_{i_j\in I}\Map^{C_2}(G,S^{i_j\rho_2+1})\right).
    \]
This applies in particular to \(\BPG\) and to any of the \(\BPGm\).

Moreover, essential the same analysis will work for any \(\Xin/(G\cdot\bar{t}_{i_1},\dots)\) in space of \(H\mZ\), provided it is a ring. The key step needed was understanding the map from \(A\) factors through \(\mathbb S^0\), which does happen in the ring case. For spectra like \(k^{((G))}[n]\), this is not guaranteed \cite{BHSZ:Extensions}.

\subsection{Splitting the spectrum}
Not only is the smash square of \(H\mZ\) in the category of \(\Xin\)-modules \(H\mZ\) smashed with a suspension spectrum, but also that suspension spectrum splits as a wedge of representation spheres. This gives us complete additive control of the homotopy Mackey functors of the relative smash product. 

The following is just a rewriting of the \(G\)-symmetric monoidalness of the infinite suspension in the case that our space \(X\) is already pointed.
\begin{proposition}\label{prop: Pointed Norms}
    For any pointed \(H\)-space \(X\), we have an equivalence
    \[
        \Sigma^{\infty}_{+} \Map^H(G,X)\simeq N_H^G\big(\Sigma^\infty X\oplus \mathbb S^0\big).
    \]
\end{proposition}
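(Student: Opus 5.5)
The plan is to deduce this from Proposition~\ref{prop:SuspensionGMonoidal} together with the stable splitting of a based space. First, for a pointed \(H\)-space \(X\) the basepoint gives an \(H\)-equivariant retraction \(X_+\to S^0\), split by \(S^0\to X_+\). After stabilizing we get a natural equivalence of \(H\)-spectra
\[
    \Sigma^\infty_+ X\simeq \Sigma^\infty X\oplus \mathbb S^0 .
\]
This equivalence is compatible with the evident maps from and to \(\mathbb S^0\), since cofiber sequences split stably once we have a retraction.

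Next, apply the norm \(N_H^G\) to both sides. The norm is a functor on \(H\)-spectra, so it carries this equivalence to an equivalence
\[
    N_H^G\big(\Sigma^\infty_+X\big)\simeq N_H^G\big(\Sigma^\infty X\oplus\mathbb S^0\big).
\]
Note that the norm is not additive, so we should not expand the right-hand side. We simply transport along an equivalence of inputs, and that requires nothing beyond functoriality (homotopy invariance) of \(N_H^G\). Composing with the equivalence \(N_H^G(\Sigma^\infty_+X)\simeq\Sigma^\infty_+\Map^H(G,X)\) of Proposition~\ref{prop:SuspensionGMonoidal} gives the claimed equivalence, and it is natural in pointed \(H\)-maps.

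The only point needing care is naturality and the \(\infty\)-categorical meaning of the splitting. Specifically, the equivalence \(\Sigma^\infty_+X\simeq\Sigma^\infty X\oplus\mathbb S^0\) must be chosen functorially in pointed \(X\). I would construct it as the sum of the map \(\Sigma^\infty_+X\to\Sigma^\infty X\), induced by \(X_+\to X\), and the map \(\Sigma^\infty_+X\to\mathbb S^0\), induced by \(X\to\ast\). Both maps are natural, and the resulting map is an equivalence because it is one on underlying fixed points (or by the cofiber sequence \(S^0\to X_+\to X\)).

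The payoff, which I would note afterwards, is that the distributive formula for norms of sums then expresses \(N_H^G(\Sigma^\infty X\oplus\mathbb S^0)\) as a wedge of induced norms of smash powers of \(\Sigma^\infty X\). For \(X\) a representation sphere, this gives the wedge of \(G_+\otimes_K S^V\) used afterwards.
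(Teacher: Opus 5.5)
Your proof is correct and follows the same route as the paper, which treats this as an immediate consequence of Proposition~\ref{prop:SuspensionGMonoidal}: use the \(H\)-fixed basepoint to split \(\Sigma^\infty_+X\simeq\Sigma^\infty X\oplus\mathbb S^0\) as \(H\)-spectra, then transport this equivalence through \(N_H^G\). One small wording fix: the comparison map \(\Sigma^\infty_+X\to\Sigma^\infty X\oplus\mathbb S^0\) is the map whose two components are the maps you list, not their ``sum.''
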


\begin{corollary}\label{cor: Stable Splitting Relative Homology}
    We have a splitting of \(H\mZ\)-modules
    \[
        H\mZ\smashover{\Xin} H\mZ\simeq H\mZ\otimes \bigotimes_{i\geq 1} N_{C_2}^{G}\big(\mathbb S^{i\rho_2+1}\oplus\mathbb S^0\big).
    \]
\end{corollary}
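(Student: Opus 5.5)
The plan is to start from Theorem~\ref{thm:Relative Homology as Spectrum}, which identifies \(H\mZ\smashover{\Xin} H\mZ\) with
\[
H\mZ\otimes\Sigma^\infty_+\Big(\prod_{i\geq 1}\Map^{C_2}(G,S^{i\rho_2+1})\Big),
\]
and to rewrite the suspension spectrum factor in two steps. Both steps only use the \(G\)-symmetric monoidal structure of \(\Sigma^\infty_+\), so the resulting equivalence is one of \(H\mZ\)-modules.

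\textbf{Step 1: pull the product apart.} Coinduction is a right adjoint, so it commutes with products:
\[
\prod_{i}\Map^{C_2}(G,S^{i\rho_2+1})\cong\Map^{C_2}\Big(G,\prod_i S^{i\rho_2+1}\Big).
\]
It is cleaner, however, to keep the product outside. Since \(\Sigma^\infty_+\) is strong symmetric monoidal from Cartesian product to smash product, we get
\[
\Sigma^\infty_+\prod_i \Map^{C_2}(G,S^{i\rho_2+1})\simeq\bigotimes_i\Sigma^\infty_+\Map^{C_2}(G,S^{i\rho_2+1}).
\]
For the infinite product we take the filtered colimit over finite subproducts. Each \(S^{i\rho_2+1}\) is pointed at \(\infty\), so this colimit is the weak product, i.e.\ the colimit of the inclusions \(\prod_{i\le N}\to\prod_{i\le N+1}\) using basepoints. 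This matches the infinite tensor product, which is likewise the colimit along the unit maps. The connectivity of \(S^{i\rho_2+1}\) grows with \(i\), so in each fixed degree the weak product and the honest product agree up to equivalence; I would state this rather than belabor it.

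\textbf{Step 2: apply Proposition~\ref{prop: Pointed Norms}.} Take \(H=C_2\) and \(X=S^{i\rho_2+1}\) pointed at \(\infty\). This gives
\[
\Sigma^\infty_+\Map^{C_2}(G,S^{i\rho_2+1})\simeq N_{C_2}^G\big(\Sigma^\infty S^{i\rho_2+1}\oplus\mathbb S^0\big)=N_{C_2}^G\big(\mathbb S^{i\rho_2+1}\oplus\mathbb S^0\big).
\]
Underlying this is the stable splitting \(\Sigma^\infty_+X\simeq\Sigma^\infty X\oplus\mathbb S^0\) for a pointed space \(X\), which is natural in \(X\) and compatible with the norm. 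Substituting into Step 1 and then tensoring with \(H\mZ\) yields the claimed splitting.

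The main obstacle is the infinite product in Step 1: making the passage between the infinite product of spaces and the infinite tensor of norms precise. I would handle it by proving the equivalence for each finite \(N\) and then passing to the colimit. For the colimit to make sense, two compatibilities must hold. First, the maps in the tower must correspond: on the space side, inclusion at the basepoint; on the spectrum side, the unit \(\mathbb S^0\to N_{C_2}^G(\mathbb S^{i\rho_2+1}\oplus\mathbb S^0)\). Second, Proposition~\ref{prop: Pointed Norms} must be natural with respect to these maps, which it is since it is natural in pointed maps. Finally, the comparison map from the colimit of the weak products to the full product is an equivalence on every fixed-point space in a range of degrees tending to infinity, because the \(i\)th sphere has geometric fixed points of dimension at least \(i\). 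This gives the equivalence after suspension and smashing with \(H\mZ\).
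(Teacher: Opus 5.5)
Your proposal is correct and follows the same route as the paper: the corollary comes from Theorem~\ref{thm:Relative Homology as Spectrum}, using that \(\Sigma^\infty_+\) turns the product into a tensor product and then applying Proposition~\ref{prop: Pointed Norms} to each factor \(X=S^{i\rho_2+1}\) with \(H=C_2\). Your extra treatment of the infinite product fills in a point the paper leaves implicit: you compare the weak product with the honest product via growing fixed-point connectivity, and you take the infinite tensor as a colimit along unit maps.
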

The distributive law for the norm then gives the explicit formula. 

Using the language of \(G\)-indexed sums and products, we have an equivalence
\[
    N_{C_2}^{G}\big(\mathbb S^{i\rho_2+1}\oplus \mathbb S^0\big)\simeq
    \bigoplus_{f\in\Map^{C_2}(G,\{a,b\})} \big(\mathbb S^{i\rho_2+1}\big)^{\otimes f^{-1}(a)}.
\]
For computation, it is helpful to collect the various \(G\)-indexed pieces into a more traditional form. For this, we rewrite using the stabilizer of \(f\).
\begin{definition}
    Let \(H_f\) be the stabilizer of \(f\) in \(G\).
\end{definition}

\begin{proposition}
    The subgroup \(H_f\) is the stabilizer of \(f^{-1}(a)\subseteq G\) and is the stabilizer of \(f^{-1}(b)\subseteq G\). This is the largest subgroup \(H\) for which the decomposition
    \[
        C_{2^n}=f^{-1}(a)\amalg f^{-1}(b)
    \]
    is \(H\)-equivariant.
\end{proposition}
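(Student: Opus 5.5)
The plan is to make the \(G\)-action on \(\Map^{C_2}(G,\{a,b\})\) explicit and compute the stabilizers directly. Identify a \(C_2\)-equivariant map \(f\colon G\to\{a,b\}\) with a function on \(G\); here \(\{a,b\}\) carries the trivial \(C_2\)-action, since both summands \(\mathbb S^{i\rho_2+1}\) and \(\mathbb S^0\) are \(C_2\)-spectra indexed by a trivial two-element set. The \(G\)-action is \((g\cdot f)(x)=f(xg)\); since \(G\) is abelian, we may equally write this as translation. Let \(H_f\) be the stabilizer of \(f\).

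The first claim is then a direct check. If \(g\in H_f\), then \(f(xg)=f(x)\) for all \(x\), so \(x\in f^{-1}(a)\) if and only if \(xg\in f^{-1}(a)\). Hence \(f^{-1}(a)\cdot g=f^{-1}(a)\), and likewise for \(f^{-1}(b)\). Conversely, suppose \(g\) stabilizes \(f^{-1}(a)\) under translation. Then \(g\) also stabilizes its complement \(f^{-1}(b)\), because translation is a bijection of \(G\). Therefore \(f(xg)=f(x)\) for every \(x\), which says \(g\in H_f\). This shows that the three stabilizers coincide: that of \(f\), that of \(f^{-1}(a)\), and that of \(f^{-1}(b)\). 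The same argument shows that a subgroup \(H\) makes the decomposition \(G=f^{-1}(a)\amalg f^{-1}(b)\) \(H\)-equivariant, meaning each piece is \(H\)-stable, exactly when \(H\subseteq H_f\). Since \(H_f\) is itself such a subgroup, it is the largest one.

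The only step needing care is the convention for the action. Left versus right translation, and the role of the \(C_2\)-equivariance constraint, could in principle matter. Because \(G=C_{2^n}\) is abelian, however, left and right translation agree. The \(C_2\)-equivariance only forces \(f\) to be constant on \(C_2\)-cosets, so it restricts which \(f\) occur but does not affect the stabilizer computation. I would remark that this also shows \(C_2\subseteq H_f\) is automatic for every such \(f\).
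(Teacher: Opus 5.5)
Your argument is correct: identifying \(H_f\) with the set of \(g\) for which translation by \(g\) preserves \(f^{-1}(a)\) (equivalently its complement \(f^{-1}(b)\)) is the direct check the paper leaves implicit, since it states this proposition without proof. Your closing remark that \(C_2\subseteq H_f\) automatically, because \(G\) is abelian, matches the observation the paper makes immediately afterward.
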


Note that since every subgroup of \(G\) acts freely on it, we have that \(f^{-1}(a)\) and \(f^{-1}(b)\) are automatically free \(H\)-sets. Moreover, since the group is abelian, any \(C_2\)-equivariant map to a set with trivial action is automatically stabilized by \(C_2\). 

These let us identify the indexed tensor.

\begin{proposition}\label{prop: Indexed Tensor}
    For any \(f\in\Map^{C_2}(G,\{a,b\})\), we have an \(H_f\)-equivariant equivalence
    \[
        \big(\mathbb S^{i\rho_2+1}\big)^{\otimes f^{-1}(a)}\simeq \big(N_{C_2}^{H_f}\mathbb S^{i\rho_2+1}\big)^{\otimes |f^{-1}(a)/H_f|}\simeq \mathbb S^{|f^{-1}(a)/H_f|\Ind_{C_2}^{G} (i\rho_2+1)}.
    \]
\end{proposition}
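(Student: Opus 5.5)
The plan is to compute the restriction to \(H_f\) of the \(G\)-indexed tensor power by decomposing its indexing set into \(H_f\)-orbits. The first identification then follows from the behaviour of indexed tensor products under restriction, and the second from the fact that norms of representation spheres are representation spheres.

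First I will pin down the indexing. An element \(f\in\Map^{C_2}(G,\{a,b\})\) is constant on right \(C_2\)-cosets. Hence \(f^{-1}(a)\subseteq G\) is a union of \(C_2\)-cosets, and the indexed tensor \(\big(\mathbb S^{i\rho_2+1}\big)^{\otimes f^{-1}(a)}\) is really indexed by the subset \(f^{-1}(a)/C_2\subseteq G/C_2\), with the \(C_2\)-spectrum \(\mathbb S^{i\rho_2+1}\) sitting at each coset. Since \(G\) is abelian, \(f(cg)=f(gc)=f(g)\) for \(c\in C_2\), so \(C_2\subseteq H_f\). By the preceding proposition, \(f^{-1}(a)\) is an \(H_f\)-stable subset of \(G\) on which \(H_f\) acts freely. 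Therefore \(f^{-1}(a)\) is a disjoint union of \(|f^{-1}(a)/H_f|\) orbits, each isomorphic to \(H_f\). Correspondingly, \(f^{-1}(a)/C_2\) is a disjoint union of copies of the transitive \(H_f\)-set \(H_f/C_2\).

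Second, I will use two formal properties of indexed tensor products in the \(G\)-symmetric monoidal category of spectra. Restricting to \(H_f\) an indexed tensor over an \(H_f\)-stable indexing set gives the \(H_f\)-indexed tensor over that set. An indexed tensor over a disjoint union of \(H_f\)-sets is the ordinary tensor product of the indexed tensors over the pieces. For a single orbit \(H_f/C_2\), the \(H_f/C_2\)-indexed tensor of the \(C_2\)-spectrum \(\mathbb S^{i\rho_2+1}\) is by definition the norm \(N_{C_2}^{H_f}\mathbb S^{i\rho_2+1}\). The choice of coset representatives only changes things by conjugation by elements of the abelian group \(G\), which acts trivially up to canonical equivalence. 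This yields the first equivalence, with \(|f^{-1}(a)/H_f|\) tensor factors.

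Third, I will identify \(N_{C_2}^{H_f}S^{V}\) with \(S^{\Ind_{C_2}^{H_f}V}\). By Proposition~\ref{prop: Pointed Norms}, or more directly by the \(G\)-symmetric monoidality of \(\Sigma^\infty\) in its pointed form, the norm of the suspension spectrum \(\Sigma^\infty S^V\) is \(\Sigma^\infty\) of the smash-coinduction \(\bigwedge_{H_f/C_2}S^V\). That space is the one-point compactification of \(\Map^{C_2}(H_f,V)=\Ind_{C_2}^{H_f}V\). Tensor products of representation spheres add the representations, giving \(S^{|f^{-1}(a)/H_f|\Ind_{C_2}^{H_f}(i\rho_2+1)}\). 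This is the stated formula, read with induction to \(H_f\), which is the only meaningful reading \(H_f\)-equivariantly.

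The main obstacle is the bookkeeping in the second step: checking that restriction along \(H_f\subseteq G\) of the \(G\)-indexed tensor \(N_{C_2}^G\big(\mathbb S^{i\rho_2+1}\oplus\mathbb S^0\big)\), on the summand labelled by \(f\), is genuinely the \(H_f\)-indexed tensor over \(f^{-1}(a)/C_2\). This is a double coset formula for the norm of a sum. I would first try to verify it at the level of spaces, where everything is the explicit smash product over cosets with \(G\) permuting factors, and then pass to spectra using that \(\Sigma^\infty\) commutes with these constructions.
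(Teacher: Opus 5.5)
Your proposal is correct and follows essentially the paper's own (very brief) justification. Since \(C_2\subseteq H_f\) and \(H_f\) acts freely on \(f^{-1}(a)\), the indexing set splits into \(|f^{-1}(a)/H_f|\) copies of \(H_f/C_2\), each contributing a norm \(N_{C_2}^{H_f}\mathbb S^{i\rho_2+1}\simeq \mathbb S^{\Ind_{C_2}^{H_f}(i\rho_2+1)}\); you are also right that the exponent should read \(\Ind_{C_2}^{H_f}\) rather than \(\Ind_{C_2}^{G}\), which matches the paper's subsequent definition of \(||f_i||\).
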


\begin{definition}
    If \(f\in\Map^{C_2}(G,\{a,b\})\) and \(i\geq 1\), then let
    \[
        ||f_i||=\frac{|f^{-1}(a)|}{|H_f|}\Ind_{C_2}^{H_f}(i\rho_2+1).
    \]
\end{definition}

Folding in the additive structure amounts collecting together the various functions in the \(G\)-orbit of an element \(f\).

\begin{corollary}\label{cor: Norm of Exterior Alg}
    We have an equivalence
    \[
        N_{C_2}^{G}\big(\mathbb S^{i\rho_2+1}\oplus\mathbb S^0\big)\simeq
        \bigoplus_{[f]\in\Map^{C_2}(G,\{a,b\})/G} G_+\otimesover{H_f} \mathbb S^{||f_i||}.
    \]
\end{corollary}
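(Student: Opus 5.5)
We start from the indexed-sum formula displayed just before the definition of \(H_f\):
\[
N_{C_2}^{G}\big(\mathbb S^{i\rho_2+1}\oplus\mathbb S^0\big)\simeq \bigoplus_{f\in\Map^{C_2}(G,\{a,b\})}\big(\mathbb S^{i\rho_2+1}\big)^{\otimes f^{-1}(a)}.
\]
This is the distributive law for the norm applied to a sum of two summands. The right-hand side is a \(G\)-indexed sum over the finite \(G\)-set \(T=\Map^{C_2}(G,\{a,b\})\), where \(G\) acts by precomposition. The plan is to regroup this indexed sum by \(G\)-orbits of \(T\). A \(G\)-indexed sum over a \(G\)-set decomposes as a sum over orbits \([f]\in T/G\). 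For each orbit \(G/H_f\), the indexed sum over that orbit is the induction \(G_+\otimesover{H_f}(-)\) of the summand at the representative \(f\), regarded as an \(H_f\)-spectrum. This is the standard description of indexed coproducts: the indexed sum over \(G/H\) of a \(G/H\)-indexed family is left adjoint to restriction, so it equals \(G_+\otimesover{H}\) of the fiber over \(eH\). I would state this as a general fact about indexed coproducts in \(G\)-spectra and apply it orbit by orbit.

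It then remains to identify the \(H_f\)-spectrum \(\big(\mathbb S^{i\rho_2+1}\big)^{\otimes f^{-1}(a)}\), with its natural \(H_f\)-action coming from the fact that \(H_f\) stabilizes the subset \(f^{-1}(a)\). This is exactly Proposition~\ref{prop: Indexed Tensor}. First, \(f^{-1}(a)\) is a free \(H_f\)-set that is also \(C_2\)-stable. Choose representatives of the \(H_f\)-orbits; this gives \(f^{-1}(a)\cong \coprod_{|f^{-1}(a)/H_f|} H_f\) as \(H_f\)-sets, compatibly with the \(C_2\)-structure. Since \(C_2\subseteq H_f\), the indexed tensor over a free \(H_f\)-orbit, viewed as \(H_f\times_{C_2}\) of \(C_2\), is \(N_{C_2}^{H_f}\mathbb S^{i\rho_2+1}\). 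Second, the norm of a representation sphere is the sphere of the induced representation, so
\[
N_{C_2}^{H_f}\mathbb S^{i\rho_2+1}\simeq \mathbb S^{\Ind_{C_2}^{H_f}(i\rho_2+1)}.
\]
Taking the \(|f^{-1}(a)/H_f|\)-fold smash power then gives \(\mathbb S^{||f_i||}\).

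One point needs care here. The count in the definition of \(||f_i||\) is \(|f^{-1}(a)|/|H_f|\), which equals \(|f^{-1}(a)/H_f|\) because the \(H_f\)-action on \(f^{-1}(a)\) is free. Also, the statement of Proposition~\ref{prop: Indexed Tensor} writes \(\Ind_{C_2}^{G}\); it should be read as \(\Ind_{C_2}^{H_f}\), as in the definition of \(||f_i||\).

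I expect the main obstacle to be equivariance bookkeeping rather than any hard idea. The identification must be natural with respect to the \(H_f\)-action, so that the induction is well defined, and it must not depend on the choice of orbit representative \(f\). For the first point, I would construct the equivalence at the level of indexed smash products of \(G\)-sets: the choice of orbit representatives gives an isomorphism of \(H_f\)-sets, and indexed tensors are functorial in isomorphisms of the indexing set. For the second point, conjugate representatives \(gf\) have the same stabilizer since \(G\) is abelian, and multiplication by \(g\) identifies their summands, so the induced spectrum is independent of the choice. Summing over \([f]\in T/G\) gives the claimed equivalence.
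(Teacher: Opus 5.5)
Your proposal is correct and follows the paper's route. You regroup the distributive-law indexed sum by \(G\)-orbits of \(\Map^{C_2}(G,\{a,b\})\), write each orbit's contribution as \(G_+\otimesover{H_f}(-)\) applied to the summand at a representative, and then identify that \(H_f\)-spectrum via Proposition~\ref{prop: Indexed Tensor}. You are also right that the exponent in that proposition should read \(\Ind_{C_2}^{H_f}\) rather than \(\Ind_{C_2}^{G}\): a dimension count confirms this, and it is what the definition of \(||f_i||\) uses.
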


Tensoring together all of the pieces then gives us our desired decomposition.

\begin{corollary}
    We have an equivalence of \(H\mZ\)-modules
    \[
        H\mZ\otimesover{\Xin} H\mZ\simeq H\mZ\otimes\bigotimes_{i\geq 1}\left(\bigoplus_{[f_i]\in\Map^{C_2}(G,\{a,b\})/G} G_+\otimesover{H_{f_i}} \mathbb S^{||f_i||} \right).
    \]
\end{corollary}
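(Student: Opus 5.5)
The plan is to assemble the final corollary from Corollary~\ref{cor: Stable Splitting Relative Homology} by expanding each tensor factor with Corollary~\ref{cor: Norm of Exterior Alg}. Corollary~\ref{cor: Stable Splitting Relative Homology}, which itself comes from Theorem~\ref{thm:Relative Homology as Spectrum} and Proposition~\ref{prop: Pointed Norms}, gives
\[
H\mZ\otimesover{\Xin} H\mZ\simeq H\mZ\otimes \bigotimes_{i\geq 1} N_{C_2}^{G}\big(\mathbb S^{i\rho_2+1}\oplus\mathbb S^0\big).
\]
Applying Proposition~\ref{prop: Pointed Norms} to each factor of the infinite product requires that \(\Sigma^\infty_+\) of an infinite product be the infinite tensor product. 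I will interpret this as the colimit of finite products, which is legitimate because the connectivity of \(S^{i\rho_2+1}\) grows with \(i\). The infinite tensor product on the right is correspondingly the colimit of the finite tensor products along the unit inclusions \(\mathbb S^0\to N_{C_2}^G(\mathbb S^{i\rho_2+1}\oplus\mathbb S^0)\).

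Next I will expand each factor. By the distributive law for norms, \(N_{C_2}^G\) of a sum is the \(G\)-indexed sum over \(\Map^{C_2}(G,\{a,b\})\) of the indexed tensors \((\mathbb S^{i\rho_2+1})^{\otimes f^{-1}(a)}\). Grouping the indices into \(G\)-orbits, each orbit \([f]\) contributes \(G_+\otimesover{H_f}\) applied to the \(H_f\)-equivariant indexed tensor. Proposition~\ref{prop: Indexed Tensor} identifies that tensor with the representation sphere \(\mathbb S^{||f_i||}\), which is exactly Corollary~\ref{cor: Norm of Exterior Alg}. Substituting this factor by factor and using that \(H\mZ\otimes-\) commutes with tensors of spectra and with colimits yields the stated equivalence of \(H\mZ\)-modules.

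I expect the main obstacle to be bookkeeping rather than anything conceptual. First, the splitting must be compatible with the \(H\mZ\)-module structure. This holds because every identification above is made at the level of \(G\)-spectra and only afterwards tensored with \(H\mZ\). Second, one must check that the orbit decomposition of the indexed sum gives an honest induced summand \(G_+\otimesover{H_f}(-)\). For this I would use that \(f^{-1}(a)\) is a free \(H_f\)-set, so the indexed tensor restricted to \(H_f\) is a norm from \(C_2\), as in Proposition~\ref{prop: Indexed Tensor}. The orbit \(G\cdot f\cong G/H_f\) then produces the induction by the standard formula for \(G\)-indexed coproducts over a transitive \(G\)-set. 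Finally, if the infinite tensor product is read as the colimit of finite ones, the compatibility of the splitting with the unit inclusions is immediate, since those inclusions pick out the summand indexed by the constant function \(f\equiv b\).
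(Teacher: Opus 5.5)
Your proposal is correct and follows the paper's argument: the paper obtains this corollary by substituting the orbit decomposition of Corollary~\ref{cor: Norm of Exterior Alg} into each tensor factor of Corollary~\ref{cor: Stable Splitting Relative Homology}. Your extra remarks are bookkeeping the paper leaves implicit, and they are accurate: reading the infinite tensor product as a colimit of finite ones is justified by the growing connectivity, and the unit maps pick out the summand indexed by the constant function at \(b\).
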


The distributive law also helps us unpack this part. We first consider a finite number of tensor factors. Here, since the group does not permute the tensor factors, the distributive law can be rewritten as a sum over
\[
    \prod_{i=1}^{n}\Map^{C_2}(G,\{a,b\})\cong \Map^{C_2}(G,\prod_{i=1}^{n}\{a,b\}).
\]
Given a function
\[
    f\in \prod_{i=1}^{n}\Map^{C_2}(G,\{a,b\}),
\]
let 
\[
    f_i=\pi_i\circ f:G\to \{a,b\}
\]
be the projection onto the \(i\)th factor. The distributive law then shows that the summand corresponding to a particular function \(f\) here is given by
\[
    \mathbb S^{||f||}:=\bigotimes_{i=1}^{n} \big(\mathbb S^{i\rho_2+1}\big)^{\otimes f_i^{-1}(a)}.
\]
We again rewrite this using the stabilizers.

\begin{proposition}
    For any \(f\), we have
    \[
        \Stab(f)=\bigcap_{i=1}^{n} \Stab(f_i).
    \]
\end{proposition}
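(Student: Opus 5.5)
The statement is a formal fact about the diagonal \(G\)-action on a product of \(G\)-sets, so I will prove it by unwinding definitions, with no homotopy theory involved. First I will fix the action. The group \(G\) acts on \(\Map^{C_2}(G,\{a,b\})\) by precomposition with translation, \((g\cdot f_i)(x)=f_i(xg)\); this is well defined because \(G\) is abelian. Under the bijection
\[
\prod_{i=1}^{n}\Map^{C_2}(G,\{a,b\})\cong \Map^{C_2}\big(G,\textstyle\prod_{i=1}^{n}\{a,b\}\big),
\]
the action on the right-hand side is also by precomposition with translation. Transported to the left-hand side, it is the diagonal action \(g\cdot(f_1,\dots,f_n)=(g\cdot f_1,\dots,g\cdot f_n)\). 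I will check this directly: \(\pi_i\circ(f\circ R_g)=(\pi_i\circ f)\circ R_g\), where \(R_g\) denotes right translation by \(g\). In words, postcomposition with the projections \(\pi_i\) commutes with precomposition by translations.

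With this in hand, the equality of stabilizers is the standard fact that an element of a product is fixed by \(g\) exactly when each coordinate is. For the inclusion \(\Stab(f)\subseteq\bigcap_i\Stab(f_i)\): if \(g\cdot f=f\), then \(g\cdot f_i=\pi_i\circ(g\cdot f)=\pi_i\circ f=f_i\) for every \(i\). For the reverse inclusion: if \(g\cdot f_i=f_i\) for all \(i\), then \(g\cdot f\) and \(f\) have the same projection to every factor, so they agree because a map into a product is determined by its components. I will also note that each \(\Stab(f_i)\) contains \(C_2\), as remarked earlier, so the intersection still contains \(C_2\). This keeps the later expressions of the form \(\Ind_{C_2}^{H_f}\) meaningful.

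I expect no real obstacle. The only point needing care is the compatibility of actions in the first step, namely that the natural \(G\)-action on \(\Map^{C_2}(G,\prod_i\{a,b\})\) matches the action implicit in the distributive law. That comes down to the one-line commutation of \(\pi_i\) with \(R_g\) above. If the paper instead uses left translation or inverses in its convention, the argument is unchanged, because whatever convention is used acts the same way on each coordinate.
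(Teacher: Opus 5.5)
Your proof is correct and is the routine verification the paper leaves implicit. The paper states this proposition without proof, taking as evident that under \(\prod_i\Map^{C_2}(G,\{a,b\})\cong\Map^{C_2}(G,\prod_i\{a,b\})\) the \(G\)-action is diagonal, so a tuple is fixed by \(g\) exactly when every coordinate is. One minor correction: the translation action is well defined for any group, and commutativity of \(G\) is only needed for your closing remark that \(C_2\subseteq\Stab(f_i)\).
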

Since \(\Stab(f)\) sits inside all of the \(\Stab(f_i)\), all of the subsets of \(G\) given by \(f_i^{-1}(a)\) are free \(\Stab(f)\)-sets. Identifying the tensor products that show up in the decomposition then follows from just restricting all of the pieces for the individual \(f_i\) to the common subgroup \(\Stab(f)\).

\begin{definition}
    Given a function 
    \[
        f\in\prod_{i=1}^{n}\Map^{C_2}(G,\{a,b\},
    \]
    let
    \[
        ||f||=\sum_{i=1}^{n} \Res_{H_f}^{H_{f_i}} ||f_i||.
    \]
\end{definition}
With this notation, we can combine together all of the pieces
\begin{proposition}
    We have an equivalence
    \[
        \bigotimes_{i=1}^{n} \bigoplus_{[f]\in\Map^{C_2}(G,\{a,b\})} G_+\otimesover{H_f} \mathbb S^{||f_i||}\simeq \\
        \bigoplus_{[f]\in\prod_{i=1}^{n}\Map^{C_2}(G,\{a,b\})} G_+\otimesover{H_f}\mathbb S^{||f||}.
    \]
\end{proposition}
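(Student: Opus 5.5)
The plan is to expand the finite tensor product by the distributive law for sums over \(G\)-sets, and then regroup the resulting indexed sum into \(G\)-orbits using stabilizers, exactly as in Corollary~\ref{cor: Norm of Exterior Alg}. Rather than distributing the already-grouped sums \(\bigoplus_{[f_i]}G_+\otimesover{H_{f_i}}\mathbb S^{||f_i||}\) directly, I will go back one step. By Corollary~\ref{cor: Norm of Exterior Alg}, each factor is the \(G\)-indexed sum
\[
\bigoplus_{f_i\in\Map^{C_2}(G,\{a,b\})}\big(\mathbb S^{i\rho_2+1}\big)^{\otimes f_i^{-1}(a)}.
\]
Since \(G\) does not permute the tensor factors, the tensor product of \(n\) such \(G\)-indexed sums is the \(G\)-indexed sum over the product \(G\)-set
\[
\prod_{i=1}^n\Map^{C_2}(G,\{a,b\})\cong\Map^{C_2}\Big(G,\prod_{i=1}^n\{a,b\}\Big),
\]
with \(G\) acting diagonally. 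The summand indexed by \(f=(f_1,\dots,f_n)\) is \(\bigotimes_i(\mathbb S^{i\rho_2+1})^{\otimes f_i^{-1}(a)}\). This step is formal, coming from the bilinearity of \(\otimes\) in genuine \(G\)-spectra applied to indexed coproducts.

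Next I regroup the indexed sum by orbits. An indexed sum over a finite \(G\)-set \(T\) splits as \(\bigoplus_{[t]\in T/G}G_+\otimesover{\Stab(t)}(\text{summand at }t)\). Applying this to \(T=\prod_i\Map^{C_2}(G,\{a,b\})\) gives summands \(G_+\otimesover{H_f}\mathbb S_f\), where \(H_f=\Stab(f)=\bigcap_i\Stab(f_i)\) by the preceding proposition, and \(\mathbb S_f\) is the \(H_f\)-spectrum \(\bigotimes_i(\mathbb S^{i\rho_2+1})^{\otimes f_i^{-1}(a)}\). It remains to identify \(\mathbb S_f\) with \(\mathbb S^{||f||}\).

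For this I restrict each factor to \(H_f\). Proposition~\ref{prop: Indexed Tensor} identifies \((\mathbb S^{i\rho_2+1})^{\otimes f_i^{-1}(a)}\) as an \(H_{f_i}\)-spectrum with \(\mathbb S^{||f_i||}\). Restriction is symmetric monoidal and commutes with indexed tensors, so restricting to \(H_f\le H_{f_i}\) gives \(\mathbb S^{\Res^{H_{f_i}}_{H_f}||f_i||}\). A tensor product of representation spheres is the sphere of the direct sum, so \(\mathbb S_f\simeq\mathbb S^{||f||}\).

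The main obstacle is keeping this compatible with the orbit regrouping. The \(H_f\)-structure on the \(f\)-summand coming from the indexed-sum formalism must agree with the diagonal \(H_f\)-structure on the restrictions. I expect this to follow because \(f_i^{-1}(a)\) is a free \(H_f\)-set, so the indexed tensor over it is \(N_{C_2}^{H_f}\) applied \(|f_i^{-1}(a)/H_f|\) times, which recomputes the restriction of \(||f_i||\) directly. Finally, the sum on the left, written over \([f]\), should be read as the \(G\)-indexed sum (equivalently the orbit-regrouped form). The passage to infinitely many factors is not part of this statement.
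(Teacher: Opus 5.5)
Your proposal is correct and takes essentially the same route as the paper. The paper also expands by the distributive law over the product \(G\)-set \(\Map^{C_2}(G,\prod_{i=1}^n\{a,b\})\), uses \(H_f=\bigcap_i H_{f_i}\), and identifies each summand by restricting the pieces \(\mathbb S^{||f_i||}\) to the common subgroup \(H_f\), on which every \(f_i^{-1}(a)\) is a free \(H_f\)-set; your explicit check that the two \(H_f\)-structures agree simply spells out what the paper leaves as a one-line remark.
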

In the discussion, the point \(b\) has played no apparent role. This part of a functions value corresponds to tensor factors with just the zero sphere, so it does not appear in our formulas. When we pass to the colimit over \(n\), this plays a large role, however.
\begin{definition}
    Let
    \[
        \mathcal I={\prod_{i=1}^{\infty}}^w\Map^{C_2}(G,\{a,b\})
    \]
    be the set of all sequences of equivariant functions
    \[
        f=(f_1,\dots)
    \]
    such that for \(i>>0\), \(f_i\) is the constant function at \(b\).
\end{definition}
Note that the stabilizer of a function \(f\in\mathcal I\) only depends on those finitely many coordinate functions which are not constant at \(b\). Moreover,
definition \(||f||\) above extends naturally over \(\mathcal I\), since if \(f_i\) is the constant function at \(b\), then \(f_i^{-1}(a)=\emptyset\).

\begin{corollary}\label{cor: Basis for Relative Homology}
    We have an equivalence of \(H\mZ\)-modules
    \[
        H\mZ\otimesover{\Xin} H\mZ\simeq H\mZ\otimes\bigoplus_{[f]\in\mathcal I/G} G_+\otimesover{H_f} \mathbb S^{||f||}.
    \]
\end{corollary}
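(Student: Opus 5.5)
The plan is to start from Corollary~\ref{cor: Stable Splitting Relative Homology}, which writes \(H\mZ\otimesover{\Xin}H\mZ\) as \(H\mZ\) tensored with the infinite tensor product \(\bigotimes_{i\geq 1}N_{C_2}^{G}\big(\mathbb S^{i\rho_2+1}\oplus\mathbb S^0\big)\). This infinite tensor product is, by definition, the filtered colimit over \(n\) of the finite tensor products \(\bigotimes_{i=1}^{n}\). The transition maps are induced by the unit \(\mathbb S^0\to N_{C_2}^{G}(\mathbb S^{(n+1)\rho_2+1}\oplus\mathbb S^0)\), namely the inclusion of the summand indexed by the constant function at \(b\). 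Since \(H\mZ\otimes(-)\) commutes with colimits, it suffices to identify the finite stages compatibly with these transition maps, and then pass to the colimit.

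For a fixed \(n\), I would apply Corollary~\ref{cor: Norm of Exterior Alg} to each factor and then the preceding proposition, which expands the tensor product of the \(n\) indexed sums. The result is a decomposition of \(\bigotimes_{i=1}^{n}\) as \(\bigoplus_{[f]}G_+\otimesover{H_f}\mathbb S^{||f||}\), where \([f]\) runs over \(G\)-orbits in \(\prod_{i=1}^{n}\Map^{C_2}(G,\{a,b\})\). The point to check is naturality in \(n\). The map \(\prod_{i=1}^{n}\Map^{C_2}(G,\{a,b\})\to\prod_{i=1}^{n+1}\Map^{C_2}(G,\{a,b\})\) extends \(f\) by the constant function at \(b\) in slot \(n+1\), and it has the following properties:
\begin{itemize}
\item it is \(G\)-equivariant;
\item it preserves stabilizers, because a constant function has stabilizer all of \(G\) and \(\Stab(f)=\bigcap_i\Stab(f_i)\);
\item it preserves \(||f||\), because the new summand has \(f_{n+1}^{-1}(a)=\emptyset\) and so contributes the zero representation.
\end{itemize}
The distributive-law identification is natural in the summand inclusion \(\mathbb S^0\hookrightarrow \mathbb S^{i\rho_2+1}\oplus\mathbb S^0\). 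So the transition map on the decomposed side is exactly the inclusion of the summands indexed by the image of this orbit-set map.

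Passing to the colimit then gives \(\operatorname{colim}_n\bigoplus_{[f]\in(\prod_{i\le n})/G}G_+\otimesover{H_f}\mathbb S^{||f||}\). Because colimits of summand inclusions of direct sums are direct sums over the colimit of the indexing sets, this is \(\bigoplus_{[f]\in\mathcal I/G}G_+\otimesover{H_f}\mathbb S^{||f||}\). Here I use that \(\mathcal I=\operatorname{colim}_n\prod_{i\le n}\Map^{C_2}(G,\{a,b\})\), which is precisely the weak product, and that orbits commute with this filtered colimit of \(G\)-sets along injections. Tensoring with \(H\mZ\) gives the stated equivalence of \(H\mZ\)-modules.

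The main obstacle is the naturality step: making sure the distributive-law decomposition of the norm of a sum is compatible with the unit inclusions, so that the colimit really is indexed by the weak product \(\mathcal I\). I would handle this by working at the space level. Before stabilizing, \(N_{C_2}^{G}(\mathbb S^{V}\oplus\mathbb S^0)\simeq\Sigma^\infty_+\Map^{C_2}(G,S^V)\) by Proposition~\ref{prop: Pointed Norms}. The inclusions in question are induced by the basepoint inclusions of the finite products into \(\prod_{i=1}^{\infty}\Map^{C_2}(G,S^{i\rho_2+1})\), whose colimit is the weak product appearing in Theorem~\ref{thm:Relative Homology as Spectrum}. The indexed distributive law is natural in maps of the summands, which gives the required compatibility.
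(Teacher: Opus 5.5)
Your proposal is correct and takes essentially the same route as the paper. Both decompose the finite tensor products \(\bigotimes_{i=1}^{n} N_{C_2}^{G}(\mathbb S^{i\rho_2+1}\oplus\mathbb S^0)\) via the distributive law and stabilizers, and then pass to the colimit over \(n\), where the weak product \(\mathcal I\) records that almost all coordinates are constant at \(b\). Your explicit check that extending by the constant function at \(b\) is the unit summand inclusion, and that it preserves \(H_f\) and \(||f||\), supplies the compatibility that the paper leaves implicit.
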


\subsection{Homotopy Mackey Functors}
Additively, the \(C_2\)-equivariant homology of
\[
    \mathbb S^{i\rho_2+1}\oplus \mathbb S^0
\]
is
\[
    \pi_{\star}^{C_2}H\mZ\oplus \pi_{\star}^{C_2}H\mZ\cdot (\varsigma\bar{r}_i),
\]
where the degree of \(\varsigma\bar{r}_i\) is
\[
    |\varsigma\bar{r}_i|=i\rho_2+1.
\]
As a \(\pi_{\star}^{C_2}H\mZ\)-module, this is the exterior algebra on a single class \(\varsigma\bar{r}_i\). Building on this, we see that the \(C_2\)-equivariant homology of the norm is given by
\[
    E_{\pi_{\star}^{C_2} H\mZ}(\varsigma\bar{r}_i,\gamma\varsigma\bar{r}_i,\dots,\gamma^{|G|/2-1}\varsigma\bar{r}_i),
\]
where the residual action of \(G=\langle \gamma\rangle\) is given by
\[
    \gamma\cdot\gamma^{j}\varsigma\bar{r}_i=\begin{cases}
        \gamma^{j+1}\varsigma\bar{r}_i & j< |G|/2-1 \\
        -\varsigma\bar{r}_i & j=|G|/2-1,
    \end{cases}
\]
in direct analogy with the algebra \(A\) and its \(C_2\)-equivariant homology. Just as there, we have a natural map
\[
    \Map^{C_2}(G,\{a,b\})\to E_{\pi_{\star}^{C_2} H\mZ}(\varsigma\bar{r}_i,\gamma\varsigma\bar{r}_i,\dots,\gamma^{|G|/2-1}\varsigma\bar{r}_i)/2
\]
given by
\[
    f\mapsto \bigwedge_{g\in f^{-1}(a)/C_2} g\varsigma\bar{r}_i.
\]

\begin{corollary}
We have an isomorphism of \(\m{RO}\)-graded homotopy groups
\[
\pi_{\m{\star}}\big(H\mZ\smashover{\Xin }H\mZ\big)\simeq \bigoplus_{f\in\mathcal I_{n}/G} \Ind_{H_{f}}^{G} \Sigma^{||f||}\pi_{\m{\star}}H\mZ.
\]
\end{corollary}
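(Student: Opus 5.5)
The plan is to apply \(\pi_{\m{\star}}\) directly to the splitting of Corollary~\ref{cor: Basis for Relative Homology},
\[
H\mZ\otimesover{\Xin} H\mZ\simeq H\mZ\otimes\bigoplus_{[f]\in\mathcal I/G} G_+\otimesover{H_f} \mathbb S^{||f||},
\]
and to identify each summand separately.

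The first step is to commute homotopy with the sum. \(H\mZ\otimes(-)\) preserves arbitrary direct sums. The \(\m{RO}\)-graded homotopy Mackey functors also commute with arbitrary sums, since all the representation spheres involved are compact. So it suffices to compute
\[
\pi_{\m{\star}}\big(H\mZ\otimes G_+\otimesover{H_f}\mathbb S^{||f||}\big)
\]
for a single orbit representative \(f\).

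The second step handles one summand. Since \(H\mZ\) is a \(G\)-spectrum, the projection formula gives
\[
H\mZ\otimes (G_+\otimesover{H_f} \mathbb S^{||f||})\simeq G_+\otimesover{H_f}\big(i_{H_f}^\ast H\mZ\otimes S^{||f||}\big).
\]
Induction of spectra agrees with coinduction (the Wirthmüller isomorphism), so on homotopy Mackey functors it becomes \(\Ind_{H_f}^G\) of the \(H_f\)-Mackey functor. Smashing with the invertible sphere \(S^{||f||}\) only shifts the \(\m{RO}\)-grading, which is what \(\Sigma^{||f||}\) means here. The homotopy of \(i_{H_f}^\ast H\mZ\) is the restriction of \(\pi_{\m{\star}}H\mZ\); I will suppress this restriction in the notation, as in the statement.

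The only real subtlety is making sense of \(\Ind_{H_f}^{G}\Sigma^{||f||}\) in the \(\m{RO}\)-graded setting. Here \(||f||\) is an \(H_f\)-representation and is not in general restricted from \(G\). One therefore has to use the \(\m{RO}\)-grading of \cite{AngeltveitBohmann, HillFreeness}, in which induced-up representations of subgroups are allowed as grading values. Shifting by \(||f||\) is then an automorphism of the \(\m{RO}(H_f)\)-graded object, and induction to \(G\) is compatible with the restriction map \(\m{RO}(G)\to\m{RO}(H_f)\).

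The final step is bookkeeping. Summing over orbit representatives \([f]\in\mathcal I/G\) gives the stated formula. The paper writes \(f\in\mathcal I_n/G\), which means the same index set, with \(n\) recording \(G=C_{2^n}\). I expect no real obstacle beyond these grading conventions: the equivalence in Corollary~\ref{cor: Basis for Relative Homology} is one of \(H\mZ\)-modules, so the resulting isomorphism is one of \(\pi_{\m{\star}}H\mZ\)-modules.
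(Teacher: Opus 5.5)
Your proposal is correct and is essentially the paper's argument: the paper gives no separate proof of this corollary and obtains it by applying \(\pi_{\m{\star}}\) to the splitting of Corollary~\ref{cor: Basis for Relative Homology}. Your additional steps make explicit what the paper leaves implicit. The first is that the grading spheres \(G_+\otimes_{K}S^V\) are compact, so \(\pi_{\m{\star}}\) commutes with the infinite sum over \(\mathcal I/G\). The second is that the projection formula together with the Wirthm\"uller isomorphism identifies each summand as \(\Ind_{H_f}^G\Sigma^{||f||}\pi_{\m{\star}}H\mZ\).
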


Sitting inside here is the \(\pi_{\mstar}H\mZ\)-submodule generated by the summands for which \(H_f=G\). These are the only terms which contribute to the geometric fixed points.

\begin{corollary}\label{cor: NonInduced Submodule}
    As an \(\pi_{\mstar}H\mZ\)-module, the submodule generated by the summands corresponding to functions \(f\) with \(H_f=G\) is isomorphic to the exterior algebra
    \[
        E_{\pi_{\mstar}H\mZ}\big(N_{C_2}^{G}(\varsigma\bar{r}_1),\dots\big),
    \]
    where
    \[
        |N_{C_2}^{G}(\varsigma\bar{r}_i)=\Ind_{C_2}^{G}(i\rho_2+1).
    \]
\end{corollary}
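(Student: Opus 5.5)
We use the decomposition of Corollary~\ref{cor: Basis for Relative Homology} and isolate the summands indexed by \(G\)-fixed sequences \(f\in\mathcal I\). A function \(f_i\in\Map^{C_2}(G,\{a,b\})\) is fixed by \(G\) exactly when it is constant, because \(G\) acts transitively on itself. So \(H_f=G\) means every coordinate \(f_i\) is constant at \(a\) or at \(b\). Since \(f\in\mathcal I\), only finitely many coordinates are constant at \(a\). The fixed sequences are therefore in bijection with finite subsets \(S\subset\{1,2,\dots\}\), where \(S=\{i : f_i\equiv a\}\). None of these \(f\) is induced, so each \(G\)-orbit is a single point and each contributes the summand \(H\mZ\otimes \mathbb S^{||f||}\) with no induction.

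Next we identify the degrees. For \(f_i\equiv a\) we have \(f_i^{-1}(a)=G\) and \(H_{f_i}=G\). By Proposition~\ref{prop: Indexed Tensor}, the corresponding tensor factor is
\[
    \big(\mathbb S^{i\rho_2+1}\big)^{\otimes G}=N_{C_2}^G\mathbb S^{i\rho_2+1}\simeq \mathbb S^{\Ind_{C_2}^G(i\rho_2+1)}.
\]
This has exactly the degree \(|N_{C_2}^G(\varsigma\bar r_i)|\). Summing over \(i\in S\) gives
\[
    ||f||=\sum_{i\in S}\Ind_{C_2}^G(i\rho_2+1).
\]
Thus the summands with \(H_f=G\) assemble to a free \(\pi_{\mstar}H\mZ\)-module with one generator in degree \(\sum_{i\in S}\Ind_{C_2}^G(i\rho_2+1)\) for each finite \(S\). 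This is additively an exterior algebra on generators in degrees \(\Ind_{C_2}^G(i\rho_2+1)\).

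It remains to name the generators so that the basis element for \(S\) is the product of the generators \(N_{C_2}^G(\varsigma\bar r_i)\), \(i\in S\). For a single \(i\), the summand \(N_{C_2}^G(\mathbb S^{i\rho_2+1}\oplus\mathbb S^0)\) contains the top summand \(N_{C_2}^G\mathbb S^{i\rho_2+1}\). Its fundamental class is the norm of the class \(\varsigma\bar r_i\) detecting \(\mathbb S^{i\rho_2+1}\subset\mathbb S^{i\rho_2+1}\oplus\mathbb S^0\), which is well defined because norms of suspension spectra are compatible by Proposition~\ref{prop: Pointed Norms}. For several \(i\), the summand indexed by \(S\) is the external tensor product of these top summands, by the distributive-law identification preceding Corollary~\ref{cor: Basis for Relative Homology}. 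Its generator is therefore the product \(\prod_{i\in S}N_{C_2}^G(\varsigma\bar r_i)\), computed in the multiplicative structure of \(\bigotimes_i N_{C_2}^G(\mathbb S^{i\rho_2+1}\oplus\mathbb S^0)\) viewed as \(\Sigma^\infty_+\) of a product of spaces.

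The main obstacle is the exterior relations. The statement is as modules, so strictly we only need the additive identification together with the monomial labeling. If the multiplicative structure is intended, we argue as follows. The square of \(N_{C_2}^G(\varsigma\bar r_i)\) lands in the degree \(2\Ind_{C_2}^G(i\rho_2+1)\). No summand of the decomposition lives in that degree with \(H_f=G\): within the \(i\)th factor the only fixed summands are \(\mathbb S^0\) and the top cell. Hence the square lies in the induced part, and we argue from the coalgebra structure of the space \(\Map^{C_2}(G,S^{i\rho_2+1})\) that it vanishes. Alternatively, since the relevant multiplication comes from \(\mathbb S^0\smashover{A}\mathbb S^0\), we can use that the diagonal on a suspension is null, so the cup-type products of top classes vanish. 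For distinct \(i\) and \(j\), graded commutativity in the \(\mRO\)-grading gives the remaining exterior relations.
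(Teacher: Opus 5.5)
Your additive argument is correct, and it is the paper's argument: the corollary is read off directly from Corollary~\ref{cor: Basis for Relative Homology}. The condition \(H_f=G\) forces every \(f_i\) to be constant, so the non-induced summands are indexed by finite subsets \(S\). Each such summand contributes one free generator in degree \(\sum_{i\in S}\Ind_{C_2}^G(i\rho_2+1)\). Since the statement only claims an isomorphism of \(\pi_{\mstar}H\mZ\)-modules, you should stop there and treat the monomial names as labels.

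Your final paragraph, however, is wrong and should be deleted. So should the claim that the basis element for \(S\) is the product of the \(N_{C_2}^G(\varsigma\br_i)\) in the ring \(H\mZ\otimesover{\Xin}H\mZ\).

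\begin{itemize}
\item \emph{The products you invoke are not the ring multiplication.} The splitting comes from \(H\mZ\otimes(\mathbb S^0\otimesover{A}\mathbb S^0)\). For an associative \(A\), the identification \(\mathbb S^0\otimesover{A}\mathbb S^0\simeq\Sigma^\infty_+\prod_i\Map^{C_2}(G,S^{i\rho_2+1})\) carries at best a coalgebra structure. The external products and the diagonal you use therefore say nothing about the multiplication on the relative smash product. The paper stresses that this method gives no control over that multiplication.

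\item \emph{The degree argument fails.} A non-induced summand contributes all of \(\Sigma^{||f||}\pi_{\mstar}H\mZ\), not only its generator. For \(S=\{2i+1\}\), the nonzero class \(a_{\Ind_{C_2}^G\sigma}N_{C_2}^G(\varsigma\br_{2i+1})\) has degree
\[
\Ind_{C_2}^G\big((2i+1)\rho_2+1-\sigma\big)=2\Ind_{C_2}^G(i\rho_2+1),
\]
which is exactly the degree of the square. So the square need not lie in the induced part.

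\item \emph{The squares are in fact nonzero.} The paper later shows that, modulo decomposables and \(a_\lambda\)-torsion, \(N_{C_2}^G(\varsigma\br_i)^2=a_{\Ind_{C_2}^G\sigma}N_{C_2}^G(\varsigma\br_{2i+1})\). This reflects the fact that \(\pi_\ast(H\F_2\otimesover{MO}H\F_2)\), which governs the geometric fixed points, is polynomial rather than exterior. For \(G=C_2\), working over \(\BPR\), it becomes the relation \(\bar\tau_i^2=a_\sigma\bar\tau_{i+1}\).
\end{itemize}

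So the exterior algebra in the statement is genuinely only an additive description.
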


Since projective modules are flat, this is a flat module over \(\pi_{\m{\star}}H\mZ\). In particular, Adams'
 original argument shows that we get a Hopf algebroid.
 
\begin{corollary}
The pair \(\Big(\pi_{\m{\star}}H\mZ,\pi_{\m{\star}}\big(H\mZ\smashover{\Xin }H\mZ\big)\Big)\) is a Hopf algebroid in the category of \(\m{RO}\)-graded commutative algebras.
\end{corollary}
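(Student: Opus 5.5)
The plan is to run Adams' classical argument, with the flatness supplied by Corollary~\ref{cor: Basis for Relative Homology} and the structure maps supplied by the commutative \(\Xin\)-algebra structure of \(H\mZ\). Write \(\Gamma=\pi_{\mstar}\big(H\mZ\smashover{\Xin}H\mZ\big)\) and \(R=\pi_{\mstar}H\mZ\).

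\textbf{Step 1: the structure maps.} Because \(H\mZ\) is a commutative \(\Xin\)-algebra, the relative smash powers \(H\mZ^{\otimes_{\Xin} k}\) form a cosimplicial commutative ring, the Amitsur complex of \(\Xin\to H\mZ\), in the \(G\)-symmetric monoidal category of \(\Xin\)-modules \cite{BHModules}. This gives, at the level of spectra:
\begin{itemize}
\item the left and right units \(\eta_L,\eta_R\colon H\mZ\to H\mZ\smashover{\Xin}H\mZ\), from the two unit inclusions;
\item the augmentation \(\epsilon\), from the multiplication;
\item the conjugation \(c\), from the swap;
\item a map \(H\mZ\smashover{\Xin}H\mZ\to H\mZ\smashover{\Xin}H\mZ\smashover{\Xin}H\mZ\), obtained by inserting the unit in the middle.
\end{itemize}
On \(\mRO\)-graded homotopy Mackey functors these give maps of graded commutative Green, and indeed Tambara, functors. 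The cosimplicial identities then yield all the coassociativity, counit and antipode identities, once the target of the last map is identified with \(\Gamma\otimes_R\Gamma\).

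\textbf{Step 2: the Künneth identification.} We need the natural map
\[
\Gamma\otimesover{R}\Gamma\to\pi_{\mstar}\big(H\mZ\smashover{\Xin}H\mZ\smashover{\Xin}H\mZ\big)
\]
to be an isomorphism. Rewrite the triple product as
\[
\big(H\mZ\smashover{\Xin}H\mZ\big)\smashover{H\mZ}\big(H\mZ\smashover{\Xin}H\mZ\big).
\]
By Corollary~\ref{cor: Basis for Relative Homology}, the left factor is, as a right \(H\mZ\)-module, a wedge of modules \(H\mZ\otimes G_+\otimesover{H_f}\mathbb S^{||f||}\). Smashing such a summand over \(H\mZ\) with any module \(M\) gives \(G_+\otimesover{H_f}S^{||f||}\otimes M\). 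In the \(\mRO\)-grading, the homotopy of this is \(\Ind_{H_f}^G\Sigma^{||f||}\) applied to \(\pi_{\mstar}M\), which is exactly the corresponding summand of \(\Gamma\otimes_R\pi_{\mstar}M\). Wedges commute with homotopy, so the Künneth map is an isomorphism. Equivalently, the Lewis--Mandell Künneth spectral sequence \cite{LewisMandell} has no higher \(\Tor\), since \(\Gamma\) is \(R\)-projective; this is the flatness noted just before the statement.

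\textbf{Step 3: compatibility of the two module structures.} The basis splitting is stated for one side, the \(H\mZ\)-module structure on the left factor. The Künneth argument, however, uses the right module structure on the left factor. I expect this to be the main obstacle. To handle it, apply the swap \(c\), which is an equivalence of spectra exchanging the two \(H\mZ\)-module structures. Then \(\Gamma\) is also free, hence projective, as a right \(R\)-module, and Step 2 applies verbatim.

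Once \(\Gamma\otimes_R\Gamma\) is identified, the comultiplication is defined on homotopy. Its coassociativity and the remaining Hopf algebroid axioms follow from applying \(\pi_{\mstar}\) to the cosimplicial identities, using the iterated version of Step 2 for the fourfold smash product, exactly as in Adams' argument.
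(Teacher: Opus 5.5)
Your proposal is correct and is essentially the paper's argument: the paper only notes that \(\pi_{\mstar}\big(H\mZ\smashover{\Xin}H\mZ\big)\) is a sum of induced shifts of \(\pi_{\mstar}H\mZ\) by Corollary~\ref{cor: Basis for Relative Homology}, hence projective and so flat, and then invokes Adams' original argument, which your Steps 1 and 2 spell out in detail. Step 3 is fine but is not really an obstacle: in the triple product you could instead split the right-hand factor, since the \(H\mZ\)-module structure used there is the left one, which is exactly the structure the splitting is stated for.
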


We can then use the Baker--Lazarev relative Adams spectral sequence machinery.
\begin{corollary}
For any \(\Xin \)-module \(M\), there is an Adams spectral sequence with \(E_{2}\)-term
\[
\Ext^{s,\m{\star}}_{\Big(\pi_{\m{\star}}H\mZ,\pi_{\m{\star}}\big(H\mZ\smashover{\Xin }H\mZ\big)\Big)}\Big(
\pi_{\m{\star}}H\mZ,\pi_{\m{\star}}\big(H\mZ\smashover{\Xin }M\big)
\Big)
\]
and converging to \(\pi_{\m{\star}-s}(M)\).
\end{corollary}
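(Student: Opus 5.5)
The corollary is an instance of the Baker--Lazarev relative Adams spectral sequence. My plan is to check its hypotheses for the commutative \(\Xin\)-algebra \(E=H\mZ\) and then identify the \(E_2\)-term.

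First I build the canonical \(H\mZ\)-based Adams resolution of \(M\) in \(\Xin\)-modules. Let \(\bar E\) be the fiber of the unit \(\Xin\to H\mZ\), and form the tower
\[
M\leftarrow \bar E\otimesover{\Xin}M\leftarrow \bar E^{\otimes 2}\otimesover{\Xin}M\leftarrow\cdots,
\]
where the tensor powers are taken over \(\Xin\). Its layers are \(H\mZ\otimesover{\Xin}\bar E^{\otimes s}\otimesover{\Xin}M\). Applying \(\pi_{\mstar}\) gives an exact couple, and the usual construction gives a spectral sequence converging conditionally to \(\pi_{\mstar-s}\) of the \(H\mZ\)-nilpotent completion of \(M\).

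Next I identify \(E_1\) with the cobar complex of the pair \(\big(\pi_{\mstar}H\mZ,\pi_{\mstar}(H\mZ\otimesover{\Xin}H\mZ)\big)\) with coefficients in \(\pi_{\mstar}(H\mZ\otimesover{\Xin}M)\). This needs the Künneth isomorphism
\[
\pi_{\mstar}\big(H\mZ\otimesover{\Xin}H\mZ\otimesover{\Xin}N\big)\cong \pi_{\mstar}\big(H\mZ\otimesover{\Xin}H\mZ\big)\otimesover{\pi_{\mstar}H\mZ}\pi_{\mstar}\big(H\mZ\otimesover{\Xin}N\big).
\]
I get it from Corollary~\ref{cor: Basis for Relative Homology}: there \(H\mZ\otimesover{\Xin}H\mZ\) is, as a left \(H\mZ\)-module, a sum of the modules \(H\mZ\otimes G_+\otimesover{H_f}\mathbb S^{||f||}\). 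Hence
\[
H\mZ\otimesover{\Xin}H\mZ\otimesover{\Xin}N\simeq \bigoplus_{[f]\in\mathcal I/G} G_+\otimesover{H_f}S^{||f||}\otimes\big(H\mZ\otimesover{\Xin}N\big),
\]
and the \(\mRO\)-grading is exactly what makes the homotopy of each summand an induced and shifted copy of \(\pi_{\mstar}(H\mZ\otimesover{\Xin}N)\). This is the flatness already used above to produce the Hopf algebroid. Once the Künneth isomorphism holds, the \(E_1\)-term is the cobar complex and its cohomology is the stated \(\Ext\).

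The main obstacle is checking that this Künneth identification is compatible with the structure maps. The right unit and comultiplication come from \(\Xin\to H\mZ\) inserted in the middle factor, and they have to match the Hopf algebroid structure; in particular the twist in the \(G\)-indexed coordinates of the induced summands must be tracked correctly. I would verify this by reducing, via the splitting, to free modules of the form \(H\mZ\otimes G_+\otimesover{H}S^V\), where it is formal.

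Finally there is convergence. I read ``converging to \(\pi_{\mstar-s}(M)\)'' as conditional convergence to the homotopy of the \(H\mZ\)-nilpotent completion, which is how Baker--Lazarev state it. For connective \(M\) this completion is \(M\) itself, because \(\Xin\to H\mZ\) is an equivalence on \(P^0\) and its fiber is slice \(1\)-connective. I would prove this by a slice connectivity count on the iterated smash powers of \(\bar E\), which tend to infinity in slice filtration.
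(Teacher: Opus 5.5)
Your proposal is correct and is the paper's argument written out in full. The paper only observes that the splitting of Corollary~\ref{cor: Basis for Relative Homology} makes \(\pi_{\mstar}(H\mZ\otimesover{\Xin}H\mZ)\) projective, hence flat, over \(\pi_{\mstar}H\mZ\), so Adams' argument gives a Hopf algebroid and the Baker--Lazarev relative Adams spectral sequence applies; one small correction is that the K\"unneth step needs freeness over the right-hand copy of \(H\mZ\), which follows from the stated left-hand splitting via the swap automorphism of \(H\mZ\otimesover{\Xin}H\mZ\). Your reading of the abutment as the \(H\mZ\)-nilpotent completion, equal to \(M\) for connective \(M\), is more accurate than the paper's unqualified ``any \(M\)''; for example, inverting \(N_{C_2}^{G}\bar{r}_1\in\pi^{G}_{\rho_G}\Xin\) gives a nonzero \(M\) with \(H\mZ\otimesover{\Xin}M\simeq 0\), since the image of that class lies in \(\pi^{G}_{\rho_G}H\mZ=0\).
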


Of course, to even state this result, we have been using that there is an underlying commutative ring structure. To actually work at all with these terms, we need to describe these products.

\section{The relative dual Steenrod algebra: multiplicative structure}\label{sec:Multiplicative}
\subsection{Operadic Digression}

Since \(H\m{\Z}\) is a commutative ring spectrum and since the Thom reduction 
\[
\Xin\to H\mZ 
\]
is a commutative ring map, the relative smash square is an equivariant commutative ring spectrum. We have described the additive structure, but the method used does not give explicit control over the multiplicative structure. 

For this, we use an equivariant version of a beautiful result of Hahn--Wilson.

\begin{theorem}[Hahn--Wilson]
    The free \(E_1\)-algebra on a regular representation sphere
    \[
        \mathbb S^0[S^{k\rho_2}]
    \]
    admits an \(E_{\rho}\)-multiplication. 
    
    Given an \(E_\rho\)-algebra \(R\),
    the obstructions to extending an \(E_1\)-map out of \(\mathbb S^0[S^{k\rho_2}]\) to an \(E_\rho\)-map lie in the homotopy groups \(\pi_{m\rho_2-1}R\). The augmentation map 
    \[
        \mathbb S^0[S^{k\rho_2}]\to\mathbb S^0
    \]
    is \(E_\rho\).
\end{theorem}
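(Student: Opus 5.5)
The plan is to follow the Hahn--Wilson strategy, replacing \(E_2\) by \(E_{\rho}\) (here \(\rho=\rho_2\)) and the sphere \(S^{2}\) by \(S^{\rho_2}\) throughout. The first step is to realize \(\mathbb S^0[S^{k\rho_2}]\) as a Thom spectrum. By Lemma~\ref{lem:James}, its underlying spectrum is \(\Sigma^\infty_+\Omega S^{k\rho_2+1}\). I will look for an \(E_\rho\)-space \(Y\) together with an \(E_\rho\)-map \(\phi\colon Y\to BGL_1(\mathbb S^0)\) (in \(C_2\)-spaces) whose Thom spectrum is \(\mathbb S^0[S^{k\rho_2}]\). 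The natural candidate is \(Y=\Omega^{\rho_2}S^{(k+1)\rho_2}\). The map \(\phi\) is defined as the \(\rho_2\)-fold loop of a based map \(S^{(k+1)\rho_2}\to B^{\rho_2}BGL_1(\mathbb S^0)\), where the equivariant delooping \(B^{\rho_2}BGL_1(\mathbb S^0)\) exists because \(BGL_1(\mathbb S^0)\) is an infinite loop \(C_2\)-space. That based map in turn corresponds to a class in \(\pi_{k\rho_2-1}^{C_2}\mathbb S^0\), chosen to play the role of the Hopf-type element used by Hahn--Wilson (an equivariant \(\eta\)-type element or a suitable twist). The Thom spectrum \(M\phi\) of an \(E_\rho\)-map is an \(E_\rho\)-ring, which gives the first claim once we know \(M\phi\simeq\mathbb S^0[S^{k\rho_2}]\).

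To identify \(M\phi\) with the free \(E_1\)-algebra, I will use the bottom cell \(S^{k\rho_2}\subset M\phi\) to get an \(E_1\)-map \(\mathbb S^0[S^{k\rho_2}]\to M\phi\). I will then check that this map is an equivalence on underlying spectra and on \(\Phi^{C_2}\). On underlying spectra this is exactly the classical Hahn--Wilson statement for \(S^{2k}\). On geometric fixed points, \(\Phi^{C_2}\mathbb S^0[S^{k\rho_2}]=\mathbb S^0[S^k]\), while \(\Phi^{C_2}M\phi\) is the Thom spectrum of the fixed-point map \(\Omega S^{k+1}\to BGL_1(\mathbb S^0)\). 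A homology computation via the Thom isomorphism and the Bott--Samelson theorem (both sides are tensor algebras on one generator) finishes the argument. \textbf{This identification is the main obstacle}: one must pick the attaching class so that the fixed-point and underlying Thom spectra both come out free. If the naive choice fails, I would instead build the \(E_\rho\)-structure directly by \(E_\rho\)-cell attachment, mimicking the proof rather than the statement.

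For the obstruction theory, the key input is a cell structure. As an \(E_\rho\)-algebra, \(\mathbb S^0[S^{k\rho_2}]\) is obtained from the free \(E_\rho\)-algebra on \(S^{k\rho_2}\) by attaching \(E_\rho\)-cells of dimension \(m\rho_2\). These cells kill the Browder/Dyer--Lashof-type classes, and the \(\rho_2\)-graded dimensions are forced by the cellular structure of \(\Omega^{\rho_2}S^{(k+1)\rho_2}\) through the Thom description. Extending an \(E_1\)-map \(\mathbb S^0[S^{k\rho_2}]\to R\), which is determined by a class in \(\pi_{k\rho_2}R\), to an \(E_\rho\)-map therefore proceeds cell by cell. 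The obstruction for each cell is the image of its attaching map, which lies in \(\pi_{m\rho_2-1}R\).

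Finally, the augmentation. The map \(Y\to\ast\) composed with the trivial map to \(BGL_1\) is not a map over \(\phi\), so instead I will use the \(E_\rho\)-cell description. The augmentation sends the generator to \(0\), so every obstruction vanishes because \(\mathbb S^0\) is an \(E_\infty\)-ring and the relevant attaching maps are decomposable in the generator. Alternatively, one can observe that \(\mathbb S^0\) is the Thom spectrum of the composite \(Y\to\ast\to BGL_1\), and produce an \(E_\rho\)-map of Thom spectra by using the \(E_\rho\) null-homotopy of the composite \(\Omega^{\rho_2}\) of \(S^{(k+1)\rho_2}\to B^{\rho_2}BGL_1\to\ast\). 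Either route yields the \(E_\rho\)-augmentation.
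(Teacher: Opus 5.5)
The paper gives no argument for this statement; it simply quotes Hahn--Wilson. Judged on its own, your proposal has a genuine gap at its foundation. No \(E_\rho\)-map \(\phi\colon\Omega^{\rho}S^{(k+1)\rho}\to BGL_1(\mathbb S^0)\) has Thom spectrum \(\mathbb S^0[S^{k\rho}]\), whatever class you choose, so the ``main obstacle'' cannot be overcome. On underlying spectra, every spherical fibration is \(H\F_2\)-oriented (as \(BGL_1(H\F_2)\) is contractible). So \(M\phi\) has mod \(2\) homology \(H_*(\Omega^2S^{2k+2};\F_2)\), a polynomial algebra on \(x_{2k}\) and all the \(Q_1^jx_{2k}\). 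This is far larger than \(H_*(\Omega S^{2k+1};\F_2)=\F_2[x_{2k}]\). Your fixed-point check is also miscomputed: \((\Omega^{\rho}X)^{C_2}\simeq\Omega\,\mathrm{hofib}(X^{C_2}\to X)\), and since \(S^{k+1}\to S^{2k+2}\) is null, the fixed points of \(\Omega^{\rho}S^{(k+1)\rho}\) are \(\Omega S^{k+1}\times\Omega^2S^{2k+2}\), not \(\Omega S^{k+1}\). Thom spectra over free \(E_\rho\)-groups such as \(\Omega^{\rho}\Sigma^{\rho}S^{k\rho}\) are Mahowald-type objects (compare the equivariant Mahowald theorem exhibiting \(H\mF_2\) as a Thom spectrum over \(\Omega^{\rho}S^{\rho+1}\)). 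The classical Hahn--Wilson result is not a statement of this kind.

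Everything downstream inherits the problem. The cell dimensions \(m\rho\) are read off from the false Thom model. The fallback of attaching \(E_\rho\)-cells to kill Browder classes is circular, since nothing shows the result has free underlying \(E_1\)-algebra. For the augmentation, \(\pi^{C_2}_{m\rho-1}\mathbb S^0\) need not vanish, and ``decomposable attaching map'' has no homotopical meaning past the first cell. An \(E_\rho\)-ring map \(M\phi\to\mathbb S^0\) would require an \(E_\rho\)-null-homotopy of \(\phi\) itself, not of a map to a point.

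A repair that keeps your Thom-spectrum instinct is to use a discrete base. \(\mathbb S^0[S^{k\rho}]\) is the Thom spectrum (the colimit) of \(\underline{\mathbb N}\to\underline{\Z}\to\mathrm{Pic}\), \(n\mapsto S^{nk\rho}\); equivalently, it is the shear of the graded \(E_\infty\)-ring \(\Sigma^\infty_+\mathbb N\). Because \(\mathbb N\) is the free \(E_1\)-monoid on a point, the underlying \(E_1\)-ring is free on \(S^{k\rho}\). The real input is that \(n\mapsto S^{n\rho}\) is an \(E_\rho\)-map. This follows by applying \(\Omega^{\rho}\) to the reduced tautological Real line bundle \(\mathbb{C}P^\infty\to\BUR\), where \(\mathbb{C}P^\infty\) under conjugation is a \(K(\mZ,\rho)\). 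One then uses Real Bott periodicity \(\Omega^{\rho}\BUR\simeq\Z\times\BUR\) and the Real \(J\)-homomorphism.

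The weight grading then does the rest. The reduced \(\rho\)-fold bar construction of \(\Sigma^\infty_+\mathbb N\) is \(S^{n\rho}\) in weight \(n\). So after shearing, the \(E_\rho\)-indecomposables of \(\mathbb S^0[S^{k\rho}]\) are \(S^{(n(k+1)-1)\rho}\) in weight \(n\). This yields a weight-by-weight \(E_\rho\)-cell structure with one cell of dimension \(m\rho\), \(m=n(k+1)-1\), in each weight \(n\geq 2\), giving obstructions in \(\pi_{m\rho-1}R\). Finally, the augmentation is projection onto weight \(0\), which is automatically \(E_\rho\).
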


For \(\Xin\) (and any of the regular quotients we normally consider), these homotopy groups are all zero.

\begin{corollary}
    For any element 
    \[
        \bar{r}_i\colon S^{i\rho_2}\to i_{C_2}^{\ast}\Xin,
    \]
    the induced map of associative rings
    \[
        \mathbb S^0[S^{i\rho_2}]\to i_{C_2}^\ast\Xin
    \]
    extends to an \(E_{\rho_2}\)-algebra map.
\end{corollary}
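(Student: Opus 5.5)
We will deduce the corollary directly from the Hahn--Wilson theorem applied with \(R=i_{C_2}^{\ast}\Xin\), which we first regard as an \(E_{\rho_2}\)-algebra. Since \(\Xin\) is \(G\)-commutative, its restriction \(i_{C_2}^\ast\Xin\) is a \(C_2\)-commutative ring spectrum, and it becomes an \(E_{\rho_2}\)-algebra by forgetting along the map of \(C_2\)-operads \(E_{\rho_2}\to E_\infty^{C_2}\). Write \(k=i\). The class \(\bar{r}_i\colon S^{i\rho_2}\to i_{C_2}^\ast\Xin\) gives, by the free--forget adjunction for \(E_1\)-algebras, the associative ring map \(\mathbb S^0[S^{i\rho_2}]\to i_{C_2}^\ast\Xin\) of the statement. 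Hahn--Wilson tells us that the source carries an \(E_{\rho}\)-structure refining its \(E_1\)-structure, and that the obstructions to promoting this \(E_1\)-map to an \(E_{\rho}\)-map lie in groups \(\pi^{C_2}_{m\rho_2-1}(i_{C_2}^\ast\Xin)\) for various \(m\).

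It therefore suffices to show that \(\pi^{C_2}_{m\rho_2-1}\Xin=0\) for all relevant \(m\), presumably \(m\geq 1\). We will use the slice theorem of \cite{HHR}. The restriction \(i_{C_2}^\ast\Xin\) is, as a \(C_2\)-spectrum, the smash product of \(2^{n-1}\) copies of \(\MUR\), so it is a Real-oriented \(C_2\)-spectrum. Its slices are wedges of \(H\mZ\wedge S^{j\rho_2}\) smashed with induced pieces of the form \(C_{2+}\wedge S^{j\rho_2}\), because these are the slices of \(\MUR\otimes \MUR\otimes\cdots\). Hence it suffices to prove two vanishing statements, \(\pi^{C_2}_{m\rho_2-1}(S^{j\rho_2}\wedge H\mZ)=0\) and \(\pi^{C_2}_{m\rho_2-1}(C_{2+}\wedge S^{j\rho_2}\wedge H\mZ)=0\). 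Once both hold, the slice spectral sequence, which converges strongly since the slices are bounded below, gives the desired vanishing.

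We handle the two vanishing statements as follows.

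\textbf{Induced summands.} These are computed underlying, where the group is \(\pi_{2m-1}\) of \(\Sigma^{2j}H\Z\). This vanishes because \(2m-1\) is odd.

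\textbf{Fixed summands.} Here we need \(\pi^{C_2}_{(m-j)\rho_2-1}H\mZ=0\) for every integer \(m-j\). This follows from the standard computation of \(\pi_\star^{C_2}H\mZ\). Nonzero classes exist in degree \(a+b\sigma\) only when \(a\) and \(b\) lie in ranges in which the total degree \(a+b\) and the fixed degree \(a\) are constrained. Degrees of the form \(\ell\rho_2-1\) fall in the gap: for \(\ell\geq 0\) the positive cone is generated by \(a_\sigma\) and \(u_{2\sigma}\), and for \(\ell<0\) the negative cone has nontrivial classes only in degrees \(\ell\rho_2-1\) with \(\ell\leq -2\) or so. This is exactly the familiar "gap" in the slice-spectral-sequence literature, used in \cite{HHR} to show that \(\pi_{-2}\) and related groups vanish.

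The main obstacle is this second bookkeeping step. We must verify precisely which degrees \(\ell\rho_2-1\) can support nonzero classes in \(\pi_\star^{C_2}H\mZ\). If negative \(\ell\) does contribute, for instance \(\Sigma^{-2\rho_2}\)-type classes, we will instead argue differently. The plan there is to note that only \(m\geq 1\) obstructions arise, coming from cells of the \(E_\rho\)-structure, and that slices \(S^{j\rho_2}\) with \(j\leq m\) contribute only through the positive cone. Alternatively, we will use the fact that \(\pi^{C_2}_{\ast\rho_2-1}\Xin=0\) is recorded in \cite{HHR} as part of the "gap"-type computation for \(\pi_{\ast\rho_2}\) of \(\Xin\), where the homotopy in these degrees is the polynomial algebra from the generator theorem and the odd shifts vanish. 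With the obstructions zero, Hahn--Wilson produces the \(E_{\rho_2}\)-extension.
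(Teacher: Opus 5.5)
Your reduction is the same as the paper's. The \(G\)-commutative structure restricts to make \(i^*_{C_2}\Xin\) an \(E_{\rho_2}\)-algebra, Hahn--Wilson places the obstructions in \(\pi^{C_2}_{m\rho_2-1}(i^*_{C_2}\Xin)\), and these groups vanish. The paper only asserts this vanishing; it is the standard odd-\(\rho_2\)-degree vanishing that follows from the slice theorem of \cite{HHR}. So your fallback of quoting it is exactly what the paper does.

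Where you try to prove the vanishing yourself, the step you call the main obstacle is misdescribed and left open, although it is easy to close. Two corrections:

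First, every slice cell of \(\Xin\) has the form \(G_+\wedge_H S^{k\rho_H}\) with \(C_2\subseteq H\). Its restriction to \(C_2\) is a wedge of copies of \(S^{(k|H|/2)\rho_2}\), so no \(C_{2+}\wedge S^{j\rho_2}\) pieces occur. This is harmless, since you showed such pieces would contribute nothing.

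Second, your cone bookkeeping is off (the positive cone is the relevant one for \(\ell\le 0\), not \(\ell\ge 0\)). More importantly, there are no exceptional classes at all: \(\pi^{C_2}_{\ell\rho_2-1}H\mZ=0\) for every \(\ell\in\Z\). To see this, write \(\ell\rho_2-1=(\ell-1)+\ell\sigma\).
\begin{itemize}
\item For \(\ell\le 0\), the group is \(\tilde H^{C_2}_{\ell-1}(S^{-\ell\sigma};\mZ)\), which is zero because \(\ell-1<0\). This case covers the infinitely many slices with \(j\ge m\), which your contingency plan about slices with \(j\le m\) would not have handled.
\item For \(\ell\ge 1\), the group is \(\tilde H^{1-\ell}_{C_2}(S^{\ell\sigma};\mZ)\). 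It is zero for \(\ell\ge 2\) by degree. For \(\ell=1\), the cofiber sequence \(C_{2+}\to S^0\to S^\sigma\) identifies \(\tilde H^0_{C_2}(S^\sigma;\mZ)\) with the kernel of the restriction \(\mZ(C_2/C_2)\to\mZ(C_2/e)\), which is an isomorphism.
\end{itemize}

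With this computation your slice argument goes through. Fix \(m\). Then \(\pi^{C_2}_{m\rho_2-1}(i^*_{C_2}\Xin)\) injects into the corresponding group of a finite slice section \(P^k\), because \(P_{k+1}\) is eventually too connected to receive maps from the finite complex \(S^{m\rho_2-1}\). The section \(P^k\) is built from finitely many slices, and the group vanishes on each of them.
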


The norm of an \(E_{\rho_2}\)-algebra is automatically an algebra over the coinduced operad
\[
    \Map^{C_2}(G,E_{\rho_2}).
\]
Work of Szczesny shows that a \(\Map^{C_2}(G,E_{\rho_2})\)-algebra have 
\begin{enumerate}
    \item an associative multiplication that is underlying \(E_2\),
    \item norm maps \(C_2\to G\) (from the \(E_\sigma\) factor of \(E_{\rho}\)), and
    \item norm maps \(\{e\}\to G\) (from the \(E_1\) factor of \(E_{\rho}\)).
\end{enumerate}
The quotient map
\[
    \Xin\to\Xin\smashover{A}\mathbb S^0
\]
therefore inherits norms from \(C_2\) to any intermediate group. Put another ways, the classes that look like norms in that they are carried by induced spheres of the approrpriate dimension and come from the norm-like cells in a coinduced sphere are actually norm classes from the underlying \(C_2\)-representation sphere. Making this precise, however, requires us to decompose our operad
\[
    \Map^{C_2}(G,E_{\rho})\simeq E_1\otimes\mathcal O,
\]
and we do not yet have such a decomposition. In light of this, we provide an additional, more direct argument using induction on the subgroup lattice.

\subsection{Products of induced classes}
By the Frobenius relation, understanding products of any classes with a proper stabilizer \(H\) is equivalent to understanding the \(H\)-equivariant products. This necessitates understanding better the restrictions in topology. For this subsection, let \(K=C_{2^m}\) be a subgroup of \(G\) containing \(C_2\).

\begin{proposition}
We have an equivalence of \(K\)-\(E_{\infty}\)-ring spectra
\[
i_{K}^{\ast}\big( H\mZ\smashover{\Xin } H\mZ\big)\simeq 
(H\mZ\smashover{\Xi_{m}}H\mZ)\otimes \Sigma^{\infty}_+\left(\prod_{i=1}^{2^{n-m}-1} \Map^{C_{2}}(K,B\BUR)\right).
\]
\end{proposition}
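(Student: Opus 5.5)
The plan is to restrict the norm structure of \(\Xin\) to \(K\) and then run the argument of Theorem~\ref{thm:Relative Homology as Spectrum} in a relative form. The first step is the restriction formula for norms. Since \(i_K^\ast N_{C_2}^G X\simeq \bigotimes_{G/K} N_{C_2}^K X\), where the tensor is over \(2^{n-m}\) cosets indexed by a fixed set of coset representatives, we have
\[
i_K^\ast\Xin\simeq \Xi_m^{\otimes 2^{n-m}}\simeq \Xi_m\otimes \big(\Xi_m^{\otimes (2^{n-m}-1)}\big),
\]
as \(K\)-\(E_\infty\) rings. Here \(\Xi_m=N_{C_2}^K\MUR\), and we single out the factor indexed by the identity coset. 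The Thom reduction map \(i_K^\ast\Xin\to H\mZ\) restricts to the Thom reduction on that distinguished factor, and on the other factors it is the ring map through \(H\mZ\). The module structure only matters up to homotopy, and the \(E_\infty\) structure only through the multiplicative tensor decomposition.

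The second step is to compute the relative tensor product over a tensor product of rings. We get
\[
i_K^\ast\big(H\mZ\smashover{\Xin}H\mZ\big)\simeq (H\mZ\smashover{\Xi_m}H\mZ)\otimesover{H\mZ}\Big(H\mZ\smashover{\Xi_m^{\otimes(2^{n-m}-1)}}H\mZ\Big).
\]
This holds because for commutative rings \(R=R_1\otimes R_2\) mapping to a commutative \(B\), we have \(B\otimes_R B\simeq (B\otimes_{R_1}B)\otimes_{B}(B\otimes_{R_2}B)\). This is a formal identity of commutative algebras, obtained from the pushout description of \(B\otimes_RB\).

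The third step identifies the remaining factor \(H\mZ\otimes_{\Xi_m^{\otimes k}}H\mZ\), with \(k=2^{n-m}-1\). The idea is that \(H\mZ\) is \(\Xi_m\)-oriented, so the relevant maps \(\Xi_m\to H\mZ\) factor as Thom-spectrum-of-a-map data. Concretely, \(\Xi_m\) is the Thom spectrum of the composite \(\Map^{C_2}(K,\BUR)\to BGL_1\mathbb S\), since the norm of a Thom spectrum is the Thom spectrum of the coinduced map. An \(E_\infty\) \(H\mZ\)-orientation trivializes this Thom spectrum after base change, so \(H\mZ\otimes \Xi_m\simeq H\mZ\otimes\Sigma^\infty_+\Map^{C_2}(K,\BUR)\) as \(H\mZ\)-algebras; this is the Thom isomorphism of Proposition~\ref{prop:Thom Diagonal} upgraded to an \(E_\infty\) statement. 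It then follows that
\[
H\mZ\otimesover{\Xi_m}H\mZ\simeq H\mZ\otimesover{H\mZ\otimes\Sigma^\infty_+\Map^{C_2}(K,\BUR)}H\mZ\simeq H\mZ\otimes\Sigma^\infty_+ B\Map^{C_2}(K,\BUR),
\]
where the last equivalence is the bar construction of Lemma~\ref{lem:SmashOverFree}, using that \(\Map^{C_2}(K,\BUR)\) is an infinite loop space. Finally, \(B\Map^{C_2}(K,\BUR)\simeq\Map^{C_2}(K,B\BUR)\), since coinduction is a right adjoint and commutes with the delooping. Taking the tensor product over the \(k\) factors gives the product of \(2^{n-m}-1\) copies.

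I expect the main obstacle to be the third step at the level of \(K\)-\(E_\infty\) structure. The map \(\Xi_m\to H\mZ\) is not the Thom-class orientation by construction, so we must show that the \(E_\infty\) map from each extra copy of \(\Xi_m\) to \(H\mZ\) agrees with one coming from an \(E_\infty\) Thom class. My first attempt would use that \(H\mZ\) is a zero slice, so the space of \(E_\infty\) ring maps \(\Xi_m\to H\mZ\) is connected; the argument would be analogous to the factorization through \(P^0\) used in Theorem~\ref{thm:Relative Homology as Spectrum}. This makes the orientation essentially unique, so any two such maps are homotopic as \(E_\infty\) maps. If that fails, I would settle for an equivalence of \(H\mZ\)-modules and deduce the ring structure separately.
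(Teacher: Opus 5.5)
\textbf{There is a genuine gap.} You reach the right formula, but Steps 2 and 3 each rest on a false base-change identity. The intermediate claim in Step 3 is exactly the \(K\)-analogue of the paper's open Conjecture~\ref{conj: Thom Version}, in an even stronger \(E_\infty\) form.

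\emph{Step 2.} The pushout description gives \(B\otimes_{R_1\otimes R_2}B\simeq (B\otimes_{R_1}B)\otimes_{B\otimes B}(B\otimes_{R_2}B)\), with the tensor over \(B\otimes B\), not over \(B\). For \(R_1=R_2=\mathbb S^0\), your formula gives \(B^{\otimes 3}\) instead of \(B\otimes B\).

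\emph{Step 3.} The correct change of rings is \(H\mZ\otimes_{\Xi_m}H\mZ\simeq (H\mZ\otimes H\mZ)\otimes_{H\mZ\otimes\Xi_m}H\mZ\), not \(H\mZ\otimes_{H\mZ\otimes\Xi_m}H\mZ\); you have replaced the absolute smash product \(H\mZ\otimes H\mZ\) by \(H\mZ\). Write \(X=\Map^{C_2}(K,\BUR)\). After the Thom isomorphism \(H\mZ\otimes\Xi_m\simeq H\mZ\otimes\Sigma^\infty_+X\), the structure map \(H\mZ\otimes\Sigma^\infty_+X\to H\mZ\otimes H\mZ\) is a Hurewicz-type map, not the augmentation. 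If it were the augmentation, the answer would contain an extra factor of \(H\mZ\otimes H\mZ\), contradicting Theorem~\ref{thm:Relative Homology as Spectrum}. So no bar construction produces \(H\mZ\otimes\Sigma^\infty_+BX\).

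Your proposal therefore proves the proposition only by first proving a statement the paper leaves as a conjecture, namely \(H\mZ\otimes_{\Xi_m}H\mZ\simeq H\mZ\otimes\Sigma^\infty_+\Map^{C_2}(K,B\BUR)\) as rings. The obstacle you anticipated, uniqueness of \(E_\infty\) maps \(\Xi_m\to H\mZ\), is not where the difficulty lies.

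\textbf{The missing idea.} Untwist the extra factors using the distinguished copy of \(\Xi_m\), not \(H\mZ\). The Thom diagonal gives \(i_K^\ast\Xin\simeq\Xi_m\otimes\Xi_m^{\otimes k}\simeq\Xi_m\otimes\Sigma^\infty_+X^k\) as \(\Xi_m\)-algebras. The composite \(\Sigma^\infty_+X^k\to i_K^\ast\Xin\to H\mZ\) is then forced to factor through the augmentation, because \(H\mZ\) is a zero slice and everything beyond the bottom summand is slice positive. This argument needs the target to be a zero slice: it works for \(H\mZ\), but would not work for a target like \(H\mZ\otimes_{\Xi_m}H\mZ\). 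Consequently
\[
H\mZ\otimes_{\Xi_m\otimes\Sigma^\infty_+X^k}H\mZ\simeq (H\mZ\otimes_{\Xi_m}H\mZ)\otimes\big(\mathbb S^0\otimes_{\Sigma^\infty_+X^k}\mathbb S^0\big)\simeq (H\mZ\otimes_{\Xi_m}H\mZ)\otimes\Sigma^\infty_+BX^k,
\]
with \(BX\simeq\Map^{C_2}(K,B\BUR)\). The relative algebra \(H\mZ\otimes_{\Xi_m}H\mZ\) is never identified, only carried along. That is why the proposition is provable while the conjecture is not.
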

\begin{proof}
    Since restriction is strong symmetric monoidal, and since
    \[
        i_K^\ast H\mZ=H\mZ,
    \]
    we have an equivalence of \(K\)-\(E_\infty\) ring spectra
    \[
        i_K^\ast\big( H\mZ\smashover{\Xin} H\mZ\big) \simeq H\mZ\smashover{i_K^\ast \Xin} H\mZ.
    \]
    By the Thom diagonal, we have an equivalence of \(K\)-\(E_\infty\) ring spectra
    \[
        i_K^\ast\Xin\simeq \Xi_m\otimes\bigotimes_{i=1}^{2^{n-m}-1} \Xi_m\simeq\Xi_m\otimes\Sigma^{\infty}_{+} \left(\prod_{i=1}^{2^{n-m}-1} \Map^{C_2}(K,\BUR)\right).
    \]
    The map
    \[
        \bigotimes_{i=1}^{2^{n-m}-1}\Sigma^{\infty}_+ \Map^{C_2}(K,\BUR)\to i_K^\ast\Xin\to H\mZ
    \]
    factors through the augmentation to \(\mathbb S^0\), since all summands but the first are slice positive. This means we again have an equivalence
    \[
        H\mZ\smashover{i_K^\ast \Xin} H\mZ\simeq \big(H\mZ\smashover{\Xi_m} H\mZ\big)\otimes \Sigma^{\infty}_{+}\left(\prod_{i=1}^{2^{n-m}-1} \Map^{C_2}(K,B\BUR)\right),
    \]
    where we have use the equivalence of \(K\)-\(E_\infty\)-spaces
    \[
        B\prod\Map^{C_2}(K,\BUR)\simeq\prod\Map^{C_2}(G,B\BUR).\qedhere
    \]
\end{proof}

By our additive computation, the homotopy of \(H\mZ\smashover{\Xi_{m}} H\mZ\) is free over that of \(H\mZ\), and hence the K\"unneth spectral sequence collapses with no extensions.

\begin{corollary}
We have an isomorphism of \(\m{RO}\)-graded commutative algebras
\[
\pi_{\m{\star}} \big(i_{K}^{\ast} H\mZ\smashover{\Xin } H\mZ\big)\cong
\pi_{\m{\star}}(H\mZ\smashover{\Xi_{m}}H\mZ)\Boxover{\pi_{\m{\star}}H\mZ} H_{\m{\star}}\left(\prod_{i=1}^{2^{n-m}-1} \Map^{C_{2}}(K,B\BUR);\mZ\right).
\]
\end{corollary}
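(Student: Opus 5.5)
The plan is to pass from the equivalence of \(K\)-\(E_\infty\) ring spectra in the preceding Proposition to homotopy via a Künneth argument in \(H\mZ\)-modules. Write
\[
    i_K^\ast\big(H\mZ\smashover{\Xin}H\mZ\big)\simeq \big(H\mZ\smashover{\Xi_m}H\mZ\big)\otimesover{H\mZ}\Big(H\mZ\otimes\Sigma^{\infty}_+ Y\Big),
\]
with \(Y=\prod_{i=1}^{2^{n-m}-1}\Map^{C_2}(K,B\BUR)\). This is a smash product over \(H\mZ\) of two commutative \(H\mZ\)-algebras, and the equivalence of the Proposition is one of \(K\)-\(E_\infty\) rings. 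So the \(\mRO\)-graded Künneth (Lewis--Mandell) spectral sequence
\[
    \Tor^{\pi_{\mstar}H\mZ}_{\ast,\mstar}\Big(\pi_{\mstar}\big(H\mZ\smashover{\Xi_m}H\mZ\big),\,H_{\mstar}(Y;\mZ)\Big)\Rightarrow \pi_{\mstar}\,i_K^\ast\big(H\mZ\smashover{\Xin}H\mZ\big)
\]
is a spectral sequence of \(\mRO\)-graded commutative algebras, with multiplicative \(E_2\)-page coming from the box product.

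The key input is that the first factor is free. By Corollary~\ref{cor: Basis for Relative Homology}, applied with \(\Xi_m\) and \(K\) in place of \(\Xin\) and \(G\), the spectrum \(H\mZ\smashover{\Xi_m}H\mZ\) is a sum of \(H\mZ\otimes (K_+\otimesover{H_f}\mathbb S^{||f||})\). Its \(\mRO\)-graded homotopy is therefore a sum of shifted induced copies of \(\pi_{\mstar}H\mZ\), which is projective in the \(\mRO\)-graded sense. These graded pieces are exactly the ``free'' objects for the \(\mRO\)-grading, since induction and shifting by representation spheres are built into \(\m\star\). Hence all higher \(\Tor\) vanishes, the spectral sequence is concentrated on the zero line, and it collapses at \(E_2\) with
\[
    E_\infty=E_2=\pi_{\mstar}\big(H\mZ\smashover{\Xi_m}H\mZ\big)\Boxover{\pi_{\mstar}H\mZ}H_{\mstar}(Y;\mZ).
\]
More directly, for a free module the edge map of the Künneth spectral sequence is already an isomorphism, because \(\otimes_{H\mZ}\) with \(H\mZ\otimes K_+\otimesover{H}S^V\) simply shifts and induces.

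Next I would settle extensions. Since \(E_\infty\) is concentrated in filtration zero, there are no additive extension problems. The edge homomorphism is the map induced by the two algebra maps into the smash product, followed by multiplication. This is a ring map, so the resulting isomorphism is one of \(\mRO\)-graded commutative algebras and there are no multiplicative extensions either.

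The step I expect to be the main obstacle is justifying the flatness and collapse in the \(\mRO\)-graded setting: projective modules over the graded Green functor \(\pi_{\mstar}H\mZ\) must be identified with sums of the induced shifted pieces, so that \(\Tor\) vanishes. I would handle this by working at the level of spectra rather than algebra. I would use the splitting of Corollary~\ref{cor: Basis for Relative Homology} together with the identity
\[
    \pi_{\mstar}\Big(H\mZ\otimes(K_+\otimesover{H}S^V)\otimes_{H\mZ}M\Big)\cong \Ind_H^K\Sigma^V\pi_{\mstar}(M),
\]
valid because \(\m\star\)-grading absorbs induction. The only further check is that multiplication on the \(E_2\)-page matches the box product, which is standard for the Künneth spectral sequence of commutative algebras.
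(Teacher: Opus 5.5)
Your proposal is correct and is essentially the paper's argument. The additive splitting (Corollary~\ref{cor: Basis for Relative Homology}, applied to \(\Xi_m\) and \(K\)) shows that \(\pi_{\mstar}\big(H\mZ\smashover{\Xi_m}H\mZ\big)\) is free over \(\pi_{\mstar}H\mZ\). Hence the K\"unneth spectral sequence for the \(K\)-\(E_\infty\) equivalence of the preceding Proposition collapses onto the zero line with no extensions, and since the edge map is a ring map, this gives an isomorphism of commutative algebras.
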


Since the homology of \(B\BUR\) is free, so too is the homology of the coinduced \(B\BUR\):
\begin{theorem}[{\cite[Theorem 5.9]{HillFreeness}}]
    We have an isomorphism of \(\mRO\)-graded Tambara functors
    \[
        H_{\m{\star}}\big(\Map^{C_{2}}(C_{2^{m}},B\BUR);\mZ\big)\cong N_{C_2}^{K} \pi_{\mstar} H\mZ [\bar{y}_1,\dots]/(\bar{y}_i^2-a_\sigma\bar{y}_{2i+1}),
    \]
    where \(|\bar{y}_i|=i\rho_2+1\).
\end{theorem}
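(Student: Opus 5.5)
The statement is cited from \cite[Theorem 5.9]{HillFreeness}, but I would reprove it in two stages: first the $C_2$-equivariant computation, then norming up to $K$ via Proposition~\ref{prop:SuspensionGMonoidal}.

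\emph{Stage one: the $C_2$ case.} Here the claim is
\[
H_{\star}(B\BUR;\mZ)\cong \pi_\star H\mZ[\bar{y}_1,\dots]/(\bar{y}_i^2-a_\sigma\bar{y}_{2i+1}),
\]
with $|\bar{y}_i|=i\rho_2+1$. Since $\BUR\simeq\Omega^{\rho_2}B^{\rho_2}\BUR$ deloops, I would compute this with the equivariant bar (Rothenberg--Steenrod/K\"unneth) spectral sequence
\[
\Tor^{H_\star(\BUR)}(\pi_\star H\mZ,\pi_\star H\mZ)\Rightarrow H_\star(B\BUR).
\]
\begin{enumerate}
\item By Theorem~\ref{thm:Splitting Oriented Homology} (with $n=1$), $H_\star(\BUR)$ is the polynomial algebra $\pi_\star H\mZ[\bar{m}_1,\dots]$ on free generators in degrees $i\rho_2$.
\item Hence $\Tor$ is exterior on the homology suspensions $\bar{y}_i=\varsigma\bar{m}_i$, in degrees $i\rho_2+1$.
\item The spectral sequence collapses for degree reasons: every generator lies in filtration one, and $B\BUR$ is an $E_\infty$-space, so the spectral sequence is one of Hopf algebras.
\end{enumerate}

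\emph{Main obstacle: the multiplicative extension $\bar{y}_i^2=a_\sigma\bar{y}_{2i+1}$.} The exterior relation $\bar{y}_i^2=0$ only holds modulo higher filtration. To resolve it I would pass to geometric fixed points. Under $\Phi^{C_2}$, $B\BUR$ goes to $BBO$ and $H\mZ$ goes to $H\F_2[b]$. There, the classical fact that in $H_*(BBO;\F_2)$ the square of $\varsigma w$ is the suspension of the Dyer--Lashof operation $Q$ applied to $w$ yields the squares. Concretely:
\begin{enumerate}
\item Compare $\bar{y}_i^2$ with $a_\sigma\bar{y}_{2i+1}$: multiplication by $a_\sigma$ becomes invertible after geometric fixed points.
\item Check that the underlying (nonequivariant) image of $\bar{y}_i^2$ vanishes, since underlying $B BU$ has exterior integral homology in odd degrees.
\item Use that $\pi_\star H\mZ$ in degree $2i\rho_2+2$ is detected jointly by restriction and $\Phi^{C_2}$ on the relevant $a_\sigma$-multiples.
\end{enumerate}
Together these pin down $\bar{y}_i^2=a_\sigma\bar{y}_{2i+1}$ up to higher-filtration corrections, which vanish by degree.

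\emph{Stage two: norming to $K$.} Proposition~\ref{prop:SuspensionGMonoidal} gives
\[
\Sigma^\infty_+\Map^{C_2}(K,B\BUR)\simeq N_{C_2}^K\Sigma^\infty_+B\BUR.
\]
Since the $C_2$-homology is free over $\pi_\star H\mZ$ on a basis of monomials, which are cells of the form $S^V$, $H\mZ\otimes\Sigma^\infty_+B\BUR$ splits as a wedge of $H\mZ\otimes S^V$. The norm of such a free module is then computed by the distributive law, exactly as in Corollary~\ref{cor: Norm of Exterior Alg}. This identifies the $\mRO$-graded homology with the norm $N_{C_2}^K$ of the $C_2$-algebra as graded Tambara functors, because the $H\mZ$-module norm of a free algebra is the algebraic norm (Tambara functor left adjoint) of its homotopy. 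Finally I would verify that the relation survives norming, i.e.\ that the ideal is generated by $C_2$-level relations, which holds by construction of the algebraic norm.
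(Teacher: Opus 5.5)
The paper gives no proof of this statement; it quotes it from \cite{HillFreeness}, so your outline has to stand on its own. The additive part of Stage one is fine. Your strategy for the extension, geometric fixed points plus a Dyer--Lashof computation, is the one the paper itself uses for $H\F_2\otimesover{MO}H\F_2$. However, the argument you give for $\bar y_i^2=a_\sigma\bar y_{2i+1}$ contains a false step.

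Step 3 asserts that $H^{C_2}_{2i\rho_2+2}(B\BUR;\mZ)$ is detected jointly by restriction and $\Phi^{C_2}$. It is not. The generator $\theta$ of $\pi^{C_2}_{3\sigma-3}H\mZ\cong\Z/2$ is both $2$-torsion and $a_\sigma$-torsion, so restriction and $\Phi^{C_2}$ both kill it. Take distinct $j_1,\dots,j_8$ with $\sum_k j_k=2i-3$, which is possible once $i\ge 20$. Then $\theta\,\bar y_{j_1}\cdots\bar y_{j_8}$ is a nonzero element of that group killed by both. So restriction, $\Phi^{C_2}$ and degree considerations on the whole group do not determine $\bar y_i^2$, and ``higher-filtration corrections vanish by degree'' is false as stated.

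What does determine $\bar y_i^2$ is the bar filtration, but the correction lives in \emph{lower}, not higher, filtration. The class $\bar y_i^2$ projects to $(\varsigma\bar m_i)^2=0$ in the exterior $E_\infty$-term in bar filtration $2$. Hence it lies in bar filtration $\le 1$, that is, in $\pi_\star H\mZ\{1\}\oplus\bigoplus_j\pi_\star H\mZ\{\bar y_j\}$.

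In degree $2i\rho_2+2$ the relevant coefficient groups are $\pi^{C_2}_{2i\rho_2+2}H\mZ=0$ and $\pi^{C_2}_{(2i-j)\rho_2+1}H\mZ$. Moreover, $\pi^{C_2}_{m\rho_2+1}H\mZ$ is nonzero only for $m=-1$, where it is $\Z/2\{a_\sigma\}$. Therefore $\bar y_i^2\in\{0,a_\sigma\bar y_{2i+1}\}$, and your underlying check is superfluous.

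$\Phi^{C_2}$ alone then decides: you need $\Phi^{C_2}(\bar y_i)^2\neq 0$ in $\F_2[b]\otimes H_*(BBO;\F_2)$. Modulo $b$, $\Phi^{C_2}\bar y_i$ equals $\varsigma b_i$, where $b_i\in H_i(BO;\F_2)$ is the standard generator; this follows from a cellular argument with $\mathbb{CP}^\infty_\R$. The formula $(\varsigma b_i)^2=\varsigma(Q^{i+1}b_i)$ only moves the question into $H_*(BO)$. The input you have not named is Kochman and Priddy's computation $Q^{i+1}b_i\equiv b_{2i+1}$ modulo decomposables, the same fact the paper uses for $H\F_2\otimesover{MO}H\F_2$. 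It gives $(\varsigma b_i)^2=\varsigma b_{2i+1}\neq 0$.

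In Stage two, $\Sigma^\infty_+B\BUR$ does not itself split. So the distributive law has to be applied to $N_{C_2}^K(H\mZ\otimes\Sigma^\infty_+B\BUR)$ as an $N_{C_2}^KH\mZ$-module, and the result then base-changed along $N_{C_2}^KH\mZ\to H\mZ$. Also, your $C_2$-algebra is free as a module, not as an algebra.

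The load-bearing claim in Stage two is that the $\mRO$-graded homotopy of the norm of a sum of modules $H\mZ\otimes S^V$ is the algebraic norm of its homotopy. This must hold compatibly with products (norm the multiplication map) and with norms. That claim is precisely the freeness machinery of the cited source. So this stage is a reduction to that theorem rather than an independent reproof.
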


Note that the degree of the class \(\bar{y}_i\) is exactly that of the class \(\varsigma\bar{r}_i\). We believe this is not coincidental (see Conjecture~\ref{conj: Thom Version} below), and that we can choose the \(\varsigma\bar{r}_i\) to essentially be these, using the \(\bar{t}_i\) generators for \(i_{C_2}^\ast\Xin\) instead of the usual \(\bar{r}_i\) \cite{BHSZ:ETheory}.

Finally, since the \(\m{RO}\)-graded homology of \(\Map^{C_{2}}(C_{2^{m}},B\BUR)\) is again free, we can express the homology of the product of copies as the box product.

\begin{corollary}
We have an isomorphism of \(\m{RO}\)-graded commutative algebras
\[
\pi_{\m{\star}} \big(i_{C_{2^{m}}}^{\ast} H\mZ\smashover{\Xin } H\mZ\big)\cong
\pi_{\m{\star}}(H\mZ\smashover{\Xi_{m}}H\mZ)\Boxover{\pi_{\m{\star}}H\mZ} H_{\m{\star}}\big(\Map^{C_{2}}(C_{2^{m}},B\BUR);\mZ\big)^{\Box (2^{n-m}-1)}.
\]
\end{corollary}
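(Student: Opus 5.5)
The plan is to combine the immediately preceding corollary, which expresses \(\pi_{\mstar}\) of the restriction as a box product over \(\pi_{\mstar}H\mZ\) with the homology of the product \(\prod_{i=1}^{2^{n-m}-1}\Map^{C_2}(K,B\BUR)\), with a Künneth decomposition of the homology of that finite product. The only new input is that the \(\mRO\)-graded \(\mZ\)-homology of a product of copies of \(X=\Map^{C_2}(C_{2^m},B\BUR)\) is the iterated box product of copies of \(H_{\mstar}(X;\mZ)\).

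First, write
\[
H\mZ\otimes\Sigma^\infty_+\Big(\prod_{i=1}^{2^{n-m}-1}X\Big)\simeq \smashover{H\mZ}\big(H\mZ\otimes\Sigma^\infty_+X\big)^{\otimes(2^{n-m}-1)},
\]
using that \(\Sigma^\infty_+\) is strong symmetric monoidal and that base change to \(H\mZ\) is symmetric monoidal. This is an equivalence of \(K\)-\(E_\infty\) \(H\mZ\)-algebras, since the product of \(E_\infty\)-spaces maps to the tensor product of \(E_\infty\)-rings.

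Second, apply the \(\mRO\)-graded Künneth spectral sequence of Lewis--Mandell to this iterated relative smash product. By \cite[Theorem 5.9]{HillFreeness}, quoted above, \(H_{\mstar}(X;\mZ)\) is free over \(\pi_{\mstar}H\mZ\). More precisely, the underlying \(H\mZ\)-module of \(H\mZ\otimes\Sigma^\infty_+X\) splits as a wedge of \(H\mZ\otimes(K_+\otimesover{H}S^V)\), exactly as in Corollary~\ref{cor: Norm of Exterior Alg}, since \(X\) is a coinduced space whose stable splitting comes from the distributive law for norms. Consequently all higher \(\Tor\) vanishes, the spectral sequence is concentrated on the zero line and collapses, and there are no additive extensions because the answer is a sum of free summands. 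The resulting edge map
\[
H_{\mstar}(X;\mZ)^{\Box(2^{n-m}-1)}\to H_{\mstar}\Big(\prod X;\mZ\Big)
\]
is the external product, so it is multiplicative. It therefore gives an isomorphism of \(\mRO\)-graded commutative algebras.

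Finally, substitute this into the preceding corollary and use the associativity of \(\Boxover{\pi_{\mstar}H\mZ}\).

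The main obstacle is the freeness and collapse claim in the \(\mRO\)-graded setting. Box products of free \(\mRO\)-graded modules generated by induced classes must again be free. This is handled by noting that \((K_+\otimesover{H}S^V)\otimes(K_+\otimesover{H'}S^W)\) decomposes by the double coset formula into induced representation spheres, so the freeness is visible at the spectrum level before taking homotopy. I would argue at the spectrum level in this way rather than algebraically, which makes the collapse immediate.
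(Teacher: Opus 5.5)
Your proposal is correct and follows the paper's argument: the paper observes that \(H_{\mstar}(\Map^{C_2}(C_{2^m},B\BUR);\mZ)\) is free by the quoted result of \cite{HillFreeness}, so the homology of the finite product is the iterated box product, and this is substituted into the preceding corollary. One small correction to your justification: the spectrum-level splitting is not a stable splitting of \(\Sigma^\infty_+X\) as in Corollary~\ref{cor: Norm of Exterior Alg}, since \(B\BUR\) is not a sphere; it comes from applying the \(H\mZ\)-relative norm and the distributive law to the free \(C_2\)-equivariant \(H\mZ\)-module \(H\mZ\otimes\Sigma^\infty_+B\BUR\), which gives the same conclusion.
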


\subsection{Non-induced classes}
By Corollary~\ref{cor: NonInduced Submodule}, the sub-module of non-induced classes looks like an exterior algebra on the classes
\[  
    N_{C_2}^{G}(\varsigma\bar{b}_i)\colon S^{\Ind_{C_2}^{G}(i\rho_2+1)}\to H\mZ\otimesover{\Xin}H\mZ.
\]
Our key step here is decomposing the geometric fixed points. 

\begin{notation}
    Let 
    \[
        \Lambda_i=\Ind_{C_2}^{G}(i\rho_2+1)-(i+1)
    \]
    be the non-fixed summand.
\end{notation}

\begin{proposition}
    Additive, as an \(\F_2[b]\)-module, the homotopy of the geometic fixed points of \(H\mZ\otimesover{\Xin}H\mZ\) are given by
    \[
        \F_2[b]\otimes E_{\F_2}(x_2, x_3,\dots),
    \]
    where 
    \[
        x_{i+1}=a_{\Lambda_i} N_{C_2}^{G}\varsigma\bar{r}_i
    \]
    has degree \(i+1\).
\end{proposition}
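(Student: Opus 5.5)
We compute the geometric fixed points of \(H\mZ\smashover{\Xin}H\mZ\) from the explicit additive splitting of Corollary~\ref{cor: Basis for Relative Homology}, or equivalently Corollary~\ref{cor: Stable Splitting Relative Homology}. Geometric fixed points \(\Phi^G\) are strong symmetric monoidal and commute with sums. They also kill every induced summand \(G_+\otimesover{H_f}\mathbb S^{||f||}\) with \(H_f\neq G\). This gives an equivalence of \(\Phi^G H\mZ\)-modules
\[
\Phi^G\big(H\mZ\smashover{\Xin}H\mZ\big)\simeq \Phi^G H\mZ\otimes \bigotimes_{i\geq 1}\Phi^G N_{C_2}^G\big(\mathbb S^{i\rho_2+1}\oplus\mathbb S^0\big).
\]
By the diagonal formula \(\Phi^G N_{C_2}^G X\simeq \Phi^{C_2}X\), each factor is \(\Phi^{C_2}(\mathbb S^{i\rho_2+1}\oplus\mathbb S^0)\simeq \mathbb S^{i+1}\oplus\mathbb S^0\), since the \(C_2\)-fixed points of \(i\rho_2+1\) have dimension \(i+1\). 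The same answer follows from Corollary~\ref{cor: NonInduced Submodule}: the only summands with \(H_f=G\) are the norms \(\mathbb S^{\Ind_{C_2}^G(i\rho_2+1)}\) and their products, and the \(G\)-fixed points of \(\Ind_{C_2}^G(i\rho_2+1)\) also have dimension \(i+1\). So additively we obtain \(\Phi^GH\mZ\) smashed with a wedge of integer spheres indexed by finite subsets of \(\{1,2,\dots\}\). The subset \(S\) contributes a sphere of dimension \(\sum_{i\in S}(i+1)\).

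Next we record the standard fact, for \(G=C_{2^n}\) with \(n\geq 1\), that \(\pi_\ast\Phi^G H\mZ\cong \F_2[b]\) with \(|b|=2\). The class \(b\) comes from \(u_{2\sigma}a_\sigma^{-2}\)-type classes; more precisely \(b=u_{\lambda}/a_{\lambda}\)-type elements in the localized \(a\)-inverted homotopy. This is part of the computation in \cite{HHR} of \(\Phi^G H\mZ\) via the \(a_{\bar\rho}\)-localization \(\pi_\star H\mZ[a_{\bar\rho}^{-1}]\). Combining this with the splitting gives a free \(\F_2[b]\)-module with one generator in each degree \(\sum_{i\in S}(i+1)\). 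That is exactly the additive shape of \(\F_2[b]\otimes E_{\F_2}(x_2,x_3,\dots)\), with \(x_{i+1}\) the generator for \(S=\{i\}\).

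It remains to identify these generators as the named classes. The geometric fixed points are computed as \(a_{\bar\rho_G}\)-inverted \(G\)-fixed homotopy in degrees in the image of \(\RO(G/G)\), that is, in integer degrees after inverting the Euler classes. The class \(N_{C_2}^G\varsigma\bar r_i\) lives in degree \(\Ind_{C_2}^G(i\rho_2+1)=(i+1)+\Lambda_i\). Multiplying by \(a_{\Lambda_i}\) lands in integer degree \(i+1\), and the result maps to the generator of the corresponding \(\mathbb S^{i+1}\) summand. The reason is that on the summand \(\mathbb S^{(i+1)+\Lambda_i}\), geometric fixed points take the fundamental class to \(\Phi^G\) of the inclusion \(a_{\Lambda_i}\colon \mathbb S^{i+1}\to\mathbb S^{(i+1)+\Lambda_i}\), which is an equivalence after \(\Phi^G\) because \(\Lambda_i^G=0\). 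Products of distinct \(x_{i+1}\) correspond to the product summands, since \(\Phi^G\) is monoidal and \(a_{V}a_W=a_{V+W}\).

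The main obstacle is the phrase ``as an \(\F_2[b]\)-module, additively.'' We only need the module structure, so the squares \(x_{i+1}^2\) need not vanish; the statement claims nothing about them. The real care goes into two points. First, we must check that \(a_{\Lambda_i}\) does not annihilate the norm class, which is handled by the monoidal argument above. Second, we must pin down \(\pi_\ast\Phi^GH\mZ=\F_2[b]\) for all \(n\), not just \(n=1\). For the second point we would cite \cite{HHR}, or compute directly via \(\widetilde{E\mathcal P}\otimes H\mZ\) and the isotropy separation sequence.
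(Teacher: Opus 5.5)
Your argument is correct and is essentially the one the paper intends. \(\Phi^G\) kills the induced summands of Corollary~\ref{cor: Basis for Relative Homology} and leaves the non-induced summands of Corollary~\ref{cor: NonInduced Submodule}, giving one integer sphere of dimension \(\sum_{i\in S}(i+1)\) for each finite \(S\subset\{1,2,\dots\}\) over \(\pi_\ast\Phi^G H\mZ\cong\F_2[b]\); then \(a_{\Lambda_i}\), which is invertible after \(\Phi^G\) since \(\Lambda_i^G=0\), carries \(N_{C_2}^G\varsigma\bar{r}_i\) to the generator \(x_{i+1}\).

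Two side remarks should be dropped, though neither is needed for this additive statement. First, for \(n\geq 2\) the generator is \(b=u_{2\sigma}/a_\sigma^2\), not \(u_\lambda/a_\lambda\): the relation \(a_{2\sigma}u_\lambda=2^{n-1}a_\lambda u_{2\sigma}\) gives \(u_\lambda/a_\lambda=0\) in \(\pi_2\Phi^GH\mZ\). Second, your claim that ring products of distinct \(x_{i+1}\) are the classes of the product summands does not follow from \(\Phi^G\) being monoidal, because the splitting is only one of \(H\mZ\)-modules; the basis elements for \(|S|\geq 2\) should simply be taken to be the summand classes.
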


We also have a more general method for finding the geometric fixed points, since they play nicely with the norm and tensor.

\begin{proposition}
    The geometric fixed points are given by
    \[
        \Phi^{G}\big(H\mZ\otimesover{\Xin} H\mZ\big)\simeq H\F_2[b]\otimesover{MO}H\F_2[b'],
    \]
    where \(b,b'\) are in degree \(2\).
\end{proposition}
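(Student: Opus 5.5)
The plan is to use that geometric fixed points \(\Phi^G\) is strong symmetric monoidal and commutes with colimits, so it carries relative tensor products to relative tensor products:
\[
\Phi^G\big(H\mZ\otimesover{\Xin}H\mZ\big)\simeq \Phi^G H\mZ\otimesover{\Phi^G\Xin}\Phi^G H\mZ.
\]
I would verify this by writing the relative tensor as the geometric realization of the bar construction \(B(H\mZ,\Xin,H\mZ)\). Each simplicial level \(H\mZ\otimes\Xin^{\otimes k}\otimes H\mZ\) is sent by \(\Phi^G\) to the corresponding level of \(B(\Phi^GH\mZ,\Phi^G\Xin,\Phi^GH\mZ)\), and \(\Phi^G\) commutes with geometric realization. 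The identification is even one of commutative ring spectra, since \(\Phi^G\) is symmetric monoidal and the Thom reduction \(\Xin\to H\mZ\) is a commutative ring map.

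Next I would identify the three pieces. For \(\Xin=N_{C_2}^G\MUR\), the diagonal formula for norms gives \(\Phi^G N_{C_2}^G\MUR\simeq\Phi^{C_2}\MUR\simeq MO\), as in \cite{HHR}. For \(H\mZ\) with \(G=C_{2^n}\) and \(n\ge1\), the standard computation, also in \cite{HHR}, gives \(\pi_*\Phi^GH\mZ\cong\F_2[b]\) with \(|b|=2\). I would recall that this comes from the chain complex \(\tilde{E}\mathcal P\otimes H\mZ\) with the generator \(b=u_{2\sigma}a_\sigma^{-2}\)-type class in degree 2. Moreover, \(\Phi^GH\mZ\) is an \(H\F_2\)-algebra (it is a module over \(\Phi^G\) of the Burnside ring spectrum and \(2\)-torsion), and it is equivalent as an \(E_\infty\) ring to \(H\F_2[b]\) in the sense of a free-ish \(H\F_2\)-algebra with polynomial homotopy. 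The notation \(H\F_2[b]\) in the statement means exactly this ring spectrum, with the \(MO\)-algebra structure induced from \(\Phi^G\) of the Thom reduction. The two factors get separately named generators \(b,b'\).

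Assembling these gives the displayed equivalence. As a consistency check I would compare with the preceding proposition. The map \(MO\to H\F_2[b]\) factors through the Thom class \(MO\to H\F_2\), because \(\Phi^G\) of the reduction kills the \(\Phi^G\) of the \(\bar r_i\) norms by the argument in Theorem~\ref{thm:Relative Homology as Spectrum}. Hence
\[
H\F_2[b]\otimesover{MO}H\F_2[b']\simeq H\F_2[b]\otimesover{H\F_2}\big(H\F_2\otimesover{MO}H\F_2\big)\otimesover{H\F_2}H\F_2[b'].
\]
Since \(\pi_*MO=\F_2[x_i\mid i\neq 2^k-1]\) and \(H\F_2\otimes MO\) is free, we get \(\pi_*(H\F_2\otimesover{MO}H\F_2)=E(x_2,x_3,\dots)\) with \(|x_{i+1}|=i+1\) for \(i\neq 2^k-1\), together with contributions from the \(b\)'s. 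This matches the additive answer up to reorganizing \(b'\) and the remaining exterior classes.

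The main obstacle is the identification of \(\Phi^GH\mZ\) as an \(MO\)-algebra. The map \(\Phi^G\Xin\to\Phi^GH\mZ\) needs to be the expected one, namely the Thom class followed by the unit. I would first try to show that it factors through \(\Phi^{C_2}\) of the Real orientation, \(MO\to\Phi^{C_2}H\mZ\to\Phi^GH\mZ\). Then I would use that any \(E_\infty\) map \(MO\to\) an \(H\F_2\)-algebra is determined by the Thom class up to homotopy.
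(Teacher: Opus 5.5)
Your core argument is the paper's justification (``they play nicely with the norm and tensor''). You use that \(\Phi^G\) is symmetric monoidal and commutes with geometric realization, so it carries \(B(H\mZ,\Xin,H\mZ)\) to \(B(\Phi^GH\mZ,\Phi^G\Xin,\Phi^GH\mZ)\), and then \(\Phi^G\Xin\simeq\Phi^{C_2}\MUR\simeq MO\) and \(\pi_*\Phi^GH\mZ\cong\F_2[b]\). This already proves the statement, because \(H\F_2[b]\) there just names \(\Phi^GH\mZ\) with the \(MO\)-algebra structure coming from \(\Phi^G\) of the Thom reduction.

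So the ``main obstacle'' in your last paragraph is not needed. The lemma you propose to use there is in fact false. The unit \(MO\to H\F_2\otimes MO\) is an \(E_\infty\) map to an \(E_\infty\)-\(H\F_2\)-algebra that is injective on homotopy, so it is not the Thom class followed by a unit. Separately, being \(2\)-torsion does not by itself make \(\Phi^GH\mZ\) an \(H\F_2\)-algebra, so your parenthetical justification for that claim does not work.
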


Our decomposition of \(H\mZ\otimesover{\Xin}H\mZ\) is asymmetrical in the two copies of \(H\mZ\), since it arises from an analysis
\[
    H\mZ\otimes\big(\mathbb S^0\otimesover{A} \mathbb S^0\big).
\]
The leftmost copy of \(b\) comes from the geometric fixed points of the left copy of \(H\mZ\), and the whole analysis is in \(H\mZ\)-modules. We may therefore basechange down to \(H\F_2\) here. 

When we take the geometric fixed points of tha associative algebra \(A\), we again get a tensor product of free associative algebras, but now on simpler classes.

\begin{proposition}
    As associative rings, the geometric fixed points of \(A\) are given by 
    \[
        \bigotimes_{i=1}^{\infty}\mathbb S^0[S^i]\to \Phi^{G}A,
    \]
    where the tensor factor corresponding to \(i\) goes in as 
    \[
        f_i:=a_{\bar{\rho}_G}^i N_{C_2}^{G}\bar{r}_i.
    \]
\end{proposition}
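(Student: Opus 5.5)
Since \(A=\bigotimes_{i\geq 1}N_{C_2}^{G}\mathbb S^0[S^{i\rho_2}]\) and the geometric fixed point functor \(\Phi^G\) is strong symmetric monoidal, the problem reduces to a single tensor factor. Concretely, we will show that
\[
    \Phi^G N_{C_2}^{G}\mathbb S^0[S^{i\rho_2}]\simeq \mathbb S^0[S^i]
\]
as associative rings, with the generator given by the stated class. Taking the (filtered colimit of finite) tensor products then gives the result for \(A\), because \(\Phi^G\) commutes with filtered colimits.

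For one factor, the plan is to use the diagonal equivalence \(\Phi^G N_{C_2}^G X\simeq \Phi^{C_2}X\) for \(C_2\)-spectra \(X\). This equivalence is symmetric monoidal, so it respects associative algebra structures. It therefore reduces the computation to \(\Phi^{C_2}\mathbb S^0[S^{i\rho_2}]\). Since \(\Phi^{C_2}\) is symmetric monoidal and preserves colimits, it commutes with the construction of free associative algebras, \(\bigoplus_k X^{\otimes k}\). Hence
\[
    \Phi^{C_2}\mathbb S^0[S^{i\rho_2}]\simeq \mathbb S^0[\Phi^{C_2}S^{i\rho_2}]=\mathbb S^0[S^{i}],
\]
because the fixed points of \(i\rho_2\) have dimension \(i\). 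Alternatively, one can argue on spaces: Lemma~\ref{lem:NormOfFreeAssoc} writes the factor as \(\Sigma^\infty_+\Omega\Map^{C_2}(G,S^{i\rho_2+1})\). Geometric fixed points of a suspension spectrum are the suspension spectrum of the fixed points, and \(\Map^{C_2}(G,Y)^G=Y^{C_2}\). This gives \(\Sigma^\infty_+\Omega S^{i+1}\), which is \(\mathbb S^0[S^i]\) by Lemma~\ref{lem:James}, and this route makes the ring structure visible.

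The remaining step is to identify the generator. The unit map \(S^{i}\to\Phi^G A\) is the \(G\)-geometric fixed points of the composite
\[
    S^{\Ind_{C_2}^G i\rho_2}=N_{C_2}^GS^{i\rho_2}\xrightarrow{N\bar r_i} N_{C_2}^G\mathbb S^0[S^{i\rho_2}]\to A.
\]
Passing to geometric fixed points of a map out of a representation sphere \(S^V\) is the same as restricting along the inclusion of the fixed sphere \(S^{V^G}\to S^V\), which is precisely precomposition with \(a_{V-V^G}\). Here \(V=\Ind_{C_2}^G i\rho_2=i\rho_G\), so \(V^G\) is \(i\)-dimensional and \(V-V^G=i\bar\rho_G\). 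Thus the generator is \(a_{\bar\rho_G}^iN_{C_2}^G\bar r_i\), as claimed.

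The main obstacle is justifying that the diagonal equivalence \(\Phi^G N_{C_2}^G\simeq\Phi^{C_2}\) is compatible with the algebra structures, and that the generator matches under it, rather than only giving an additive equivalence. I would handle this through the space-level model above, where everything is a suspension spectrum and the monoid structures are the evident loop-space ones. There the identification \(\Map^{C_2}(G,\Omega\Sigma X)^G=(\Omega\Sigma X)^{C_2}\) is visibly multiplicative.
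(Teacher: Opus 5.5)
Your proposal is correct and takes essentially the same approach as the paper. The paper states this proposition without a separate proof, relying on exactly these facts: \(\Phi^G\) is symmetric monoidal and colimit-preserving; the diagonal \(\Phi^G N_{C_2}^G\simeq\Phi^{C_2}\) is monoidal, so each factor becomes \(\mathbb S^0[\Phi^{C_2}S^{i\rho_2}]=\mathbb S^0[S^i]\); and the generator is read off by precomposing \(N_{C_2}^G\bar r_i\) with \(a_{i\bar\rho_G}=a_{\bar\rho_G}^i\).
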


The map to 
\[
    \pi_\ast MO\cong \F_2[x_2,x_4,\dots]
\]
was computed in \cite{HHR}: 
\[
    f_i\mapsto\begin{cases}
        x_i & i\neq 2^j-1 \\
        0 & i=2^j-1.
    \end{cases}
\]
Moreover, the relative tensor over the subalgebra generated by 
\[
    f_1, f_3, f_7,\dots, f_{2^j-1},\dots
\]
was shown to be a polynomial algebra on a \(2\)-dimensional class: \(b'\).

It suffices to understand the homotopy of 
\[
    H\F_2\otimesover{MO}H\F_2.
\]

\begin{proposition}
    The homotopy groups of 
    \[
        H\F_2\otimesover{MO}H\F_2
    \]
    are a polynomial algebra on classes in odd degrees:
    \[
        \varsigma x_2, \varsigma x_4,\dots
    \]
\end{proposition}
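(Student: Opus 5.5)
The plan is a Künneth computation, which settles the additive structure, followed by a Dyer--Lashof argument for the multiplicative extensions. The point to keep in mind is that ``polynomial on odd classes'' is not what the Künneth \(E_2\)-term looks like. It only becomes visible after solving extensions.

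\emph{Additive step.} Since \(MO\to H\F_2\) is Thom reduction, there is a Künneth spectral sequence
\[
E_2=\Tor_{\pi_\ast MO}\big(\F_2,\F_2\big)\Rightarrow \pi_\ast\big(H\F_2\otimesover{MO}H\F_2\big).
\]
Here \(\pi_\ast MO\cong\F_2[x_i\mid i\neq 2^j-1]\), and each \(x_i\) is sent to zero. The \(E_2\)-term is therefore the exterior algebra \(E_{\F_2}(\varsigma x_i\mid i\neq 2^j-1)\), with \(|\varsigma x_i|=i+1\). It is concentrated in homological degrees \(s\geq 0\), generated multiplicatively by \(s=1\), and the spectral sequence is multiplicative. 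Because the target is an \(MO\)-module and \(H\F_2\otimesover{MO}H\F_2\) is an \(H\F_2\)-module, I expect collapse, for example by comparing with the analogous presentation \(H\F_2\simeq MO\otimesover{\mathbb S^0[f_i]}\mathbb S^0\). That presentation gives a splitting exactly as in Theorem~\ref{thm:Relative Homology as Spectrum}: \(H\F_2\otimes\Sigma^\infty_+\prod_{i\neq 2^j-1}S^{i+1}\). As a sanity check, Euler's identity gives
\[
\prod_{k\neq 2^j}(1+t^k)=\prod_{m\ \text{odd},\,m\geq 3}\frac{1}{1-t^m}.
\]
So the exterior algebra on classes in all degrees other than powers of \(2\) has the same Poincaré series as a polynomial algebra on classes of odd degree \(m\geq 3\), that is, on \(\varsigma x_{2},\varsigma x_4,\dots\), which are \(\varsigma x_{m-1}\).

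\emph{Multiplicative step.} It then suffices to prove, for each even \(i=2k\) and each \(j\geq 1\),
\[
(\varsigma x_{2k})^{2^j}\equiv \varsigma x_{2^j(2k+1)-1}\pmod{\text{decomposables}}.
\]
Every index \(i\neq 2^j-1\) can be written uniquely as \(2^j(2k+1)-1\) with \(k\geq 1\), so these are precisely the hidden extensions converting the exterior algebra into the polynomial one. To detect the extensions I will pass to mod \(2\) homology, that is, apply \(H\F_2\otimes(-)\). This is injective on homotopy, since the target is an \(H\F_2\)-module. Its homology is computed by
\[
\Tor_{H_\ast MO}\big(\mathcal A_\ast,\mathcal A_\ast\big).
\]
Here \(H_\ast MO\cong\mathcal A_\ast\otimes\pi_\ast MO\) maps onto the dual Steenrod algebra \(\mathcal A_\ast\), so this \(\Tor\) is \(\mathcal A_\ast\otimes E(\varsigma a_i)\) with \(a_i\) lifting \(x_i\). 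Now \(H\F_2\otimes(H\F_2\otimesover{MO}H\F_2)\) is an \(E_\infty\) \(H\F_2\)-algebra, so squaring is the top Dyer--Lashof operation: \(y^2=Q^{|y|}y\). The homology suspension commutes with Dyer--Lashof operations, up to the degree shift. This gives
\[
(\varsigma a_{2k})^2=Q^{2k+1}\varsigma a_{2k}=\varsigma\big(Q^{2k}a_{2k}\big).
\]

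\emph{Main obstacle.} The crux is the Dyer--Lashof computation in \(H_\ast MO\). We need \(Q^{2k}a_{2k}\equiv a_{4k+1}\) modulo decomposables and the image of \(\mathcal A_\ast\), and then its iterates for higher \(j\). I would take this from Priddy's and Kochman's description of \(H_\ast MO\) as generated over \(\mathcal A_\ast\) by Dyer--Lashof iterates of the classes \(a_{2k}\), checking the indecomposable leading term through the Thom isomorphism \(H_\ast MO\cong H_\ast BO\) and the known action on \(H_\ast BO\). Granting this, the squares are nonzero indecomposables in the associated graded, which pins down the extensions. Pulling back along the injective Hurewicz map then yields the stated polynomial structure on \(\varsigma x_2,\varsigma x_4,\dots\).
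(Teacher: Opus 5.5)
Your route is essentially the paper's. You get collapse of the K\"unneth spectral sequence to \(E_{\F_2}(\varsigma x_i\mid i\neq 2^j-1)\), then use Kochman's Dyer--Lashof computation in \(H_\ast BO\cong H_\ast MO\) and the fact that Dyer--Lashof operations commute with the homology suspension. Your passage to \(H\F_2\otimes(-)\) and \(\Tor_{H_\ast MO}(\mathcal A_\ast,\mathcal A_\ast)\) does the same job as the paper's Thom-isomorphism rewrite \(H\F_2\otimesover{MO}H\F_2\simeq (H\F_2\otimes H\F_2)\otimesover{H\F_2\otimes BO_+}H\F_2\). Both put the base ring among \(E_\infty\)-\(H\F_2\)-algebras, so that \(Q^s\) acts on the classes being suspended; the paper's version just avoids the extra Hurewicz pull-back. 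Your collapse argument via the nonequivariant analogue of the Reduction Theorem, the Euler-identity count, and the explicit iteration to \(2^j\)th powers are correct and fill in steps the paper leaves implicit.

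However, the crux is misindexed as written. Upper-indexed operations commute with the suspension with no shift; in lower indices this reads \(\varsigma Q_1=Q_0\varsigma\). So \((\varsigma a_{2k})^2=Q^{2k+1}\varsigma a_{2k}=\varsigma(Q^{2k+1}a_{2k})\), not \(\varsigma(Q^{2k}a_{2k})\). As you wrote it, the step fails. \(Q^{2k}a_{2k}=a_{2k}^2\) is a decomposable of degree \(4k\), so its suspension vanishes. The congruence \(Q^{2k}a_{2k}\equiv a_{4k+1}\) that you list as the main obstacle is not even degree-consistent.

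The input you actually need is \(Q^{2k+1}a_{2k}=Q_1a_{2k}\equiv a_{4k+1}\) modulo decomposables. This is exactly the paper's \(Q_1x_i\equiv x_{2i+1}\), i.e.\ Kochman's \(Q^{n+1}b_n\equiv b_{2n+1}\) in \(H_\ast BO\). With that correction, \((\varsigma x_i)^2\equiv\varsigma x_{2i+1}\) holds for every \(i\neq 2^j-1\); note that \(2i+1\) is then also not of that form. Iterating gives your relations, and the rest of your argument goes through.

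One small wording point: the square is not a nonzero class in the associated graded. It vanishes in the exterior algebra and is detected by the filtration-one class \(\varsigma x_{4k+1}\).
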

\begin{proof}
    The Thom isomorphism gives a change-of-rings isomorphism
    \[
        H\F_2\otimesover{MO}H\F_2\simeq (H\F_2\otimes H\F_2)\otimesover{H\F_2\otimes BO_+}H\F_2.
    \]
    The K\"unneth spectral sequence here collapses, giving an associated graded of the form
    \[
        E_{\F_2}(\varsigma x_i\mid i\neq 2^j-1).
    \]
    Kochman's computation of the Dyer--Lashof action on the homology of \(BO\) \cite{Kochman} shows that we have
    \[
        Q_1 x_i\equiv x_{2i+1} \mod \text{decomposables.}
    \]
    Since the Dyer--Lashof generators commute with the homology suspension, we have
    \[
        Q_0(\varsigma x_i)=\varsigma(Q_1x_i)=\varsigma x_{2i+1},
    \]
    giving the desired result.
\end{proof}

Tracing the induced map on K\"unneth spectral sequences, we see the following.

\begin{corollary}
    Modulo decomposables and \(a_\lambda\)-torsion elements, we have
    \[
        N_{C_2}^{G}(\varsigma\bar{b}_i)^2=a_{\Ind_{C_2}^{G}\varsigma} N_{C_2}^{G}(\varsigma\bar{b}_{2i+1}).
    \]
\end{corollary}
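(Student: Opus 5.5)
The plan is to detect the relation on geometric fixed points and then lift it back to \(\mRO\)-graded homotopy. First I would use the additive description from Corollary~\ref{cor: NonInduced Submodule}: in degree \(2\Ind_{C_2}^{G}(i\rho_2+1)\), the class \(N_{C_2}^{G}(\varsigma\bar{b}_i)^2\) can be written as a \(\pi_{\mstar}H\mZ\)-combination of the basis elements of Corollary~\ref{cor: Basis for Relative Homology}. There are three kinds of terms: products of non-induced exterior generators, which are decomposable; transfers from proper stabilizers \(H_f<G\), which I would also treat as decomposable or dispose of using the Frobenius relation and the restriction formulas of the previous subsection; and a single indecomposable candidate, namely a coefficient times \(N_{C_2}^{G}(\varsigma\bar{b}_{2i+1})\). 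A degree count shows this coefficient must lie in
\[
\pi_{2\Ind_{C_2}^{G}(i\rho_2+1)-\Ind_{C_2}^{G}((2i+1)\rho_2+1)}H\mZ=\pi_{-\Ind_{C_2}^{G}\sigma}H\mZ.
\]
Modulo \(a_\lambda\)-torsion, this group is generated by \(a_{\Ind_{C_2}^{G}\sigma}\). So the statement reduces to showing that the coefficient of \(a_{\Ind_{C_2}^{G}\sigma}N_{C_2}^{G}(\varsigma\bar{b}_{2i+1})\) is nonzero, that is, that it is \(1\) mod \(2\) up to \(a_\lambda\)-torsion.

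To detect this coefficient I would pass to geometric fixed points. Inverting \(a_\lambda\) kills exactly the \(a_\lambda\)-torsion and the induced classes, which explains the ``modulo'' clause. Under \(\Phi^G\), the class \(N_{C_2}^{G}(\varsigma\bar{b}_i)\) goes to \(x_{i+1}=a_{\Lambda_i}N_{C_2}^{G}\varsigma\bar{r}_i\). Since \(\Lambda_i\) has dimension \(|G|(i+1)/2-(i+1)\), a comparison of degrees shows that the Euler classes match up: \(a_{\Lambda_i}^2=a_{\Ind\sigma}a_{\Lambda_{2i+1}}\) up to the \(b\)-adjustment. It therefore suffices to prove that \(x_{i+1}^2=x_{2i+2}\) modulo decomposables in \(\pi_\ast\Phi^G(H\mZ\otimesover{\Xin}H\mZ)\).

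For this I would use the identification
\[
\Phi^G(H\mZ\otimesover{\Xin}H\mZ)\simeq H\F_2[b]\otimesover{MO}H\F_2[b'].
\]
The plan is to compare it with \(H\F_2\otimesover{MO}H\F_2\) by base change along \(H\F_2[b]\to H\F_2\). The \(E_\infty\)-structure there gives squares computed by \(Q_0\), and the preceding proposition gives
\[
(\varsigma x_{i+1})^2=Q_0\varsigma x_{i+1}=\varsigma x_{2i+2}
\]
modulo decomposables. Tracing through the Künneth spectral sequences, \(x_{i+1}\) corresponds to \(\varsigma x_{i+1}\), after the index shift \(f_i\mapsto x_i\) and the passage to degree \(i+1\). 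The classes \(b\) and \(b'\) only contribute decomposables.

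The main obstacle is making this last comparison honest. The Künneth filtration could hide the relation: a square that is nonzero in \(H\F_2\otimesover{MO}H\F_2\) might lift to \(\Phi^G\) only up to terms involving \(b\) or \(b'\). I would also need to ensure that the geometric fixed point map is injective on the relevant indecomposable quotient modulo \(a_\lambda\)-torsion. I would handle both points by a filtration argument: the map is an isomorphism after inverting \(a_\lambda\), and the indecomposables in the target degree are one-dimensional, so any nonzero image must be the generator.
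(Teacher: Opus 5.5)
Your route is the paper's: expand the square in the additive basis, use the \(RO(G)\)-degree count to isolate a coefficient in \(\pi_{-\Ind_{C_2}^{G}\sigma}H\mZ\) on \(N_{C_2}^{G}(\varsigma\bar b_{2i+1})\), and detect that coefficient on \(\Phi^G\) through the \(MO\) computation. The genuine gap is that your detection step, the comparison with \(H\F_2\otimesover{MO}H\F_2\), is blind to the indices \(i=2^j-1\), beginning with \(i=1\).

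For these \(i\), HHR's computation sends \(f_i=a_{\bar{\rho}_G}^{i}N_{C_2}^{G}\bar r_i\) to \(0\) in \(\pi_\ast MO\). So \(x_{i+1}\) and \(x_{2i+2}\) are not suspensions of classes of \(\pi_\ast MO\). They are the powers \(b'^{2^{j-1}}\) and \(b'^{2^{j}}\) coming from the right-hand copy of \(\Phi^GH\mZ\) (up to terms from the left-hand \(\F_2[b]\)), and they die under \(\Phi^GH\mZ\to H\F_2\). In \(H\F_2\otimesover{MO}H\F_2\) there is no length-one class in degree \(2^{j+1}\) at all, because \(x_{2^{j+1}-1}\) is not a generator of \(\pi_\ast MO\). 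So nothing can be read off, and your assertion that \(b\) and \(b'\) only contribute decomposables is false exactly here: \(x_2\) is essentially \(b'\). These are not marginal cases; they are the ones that survive to the \(\BPR\)-relative computation as \(\bar\tau_i^2=a_\sigma\bar\tau_{i+1}\).

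To handle them you need the extra input from HHR that \(\pi_\ast\Phi^GH\mZ=\F_2[b']\), i.e. that the tensor over \(f_1,f_3,f_7,\dots\) is polynomial on \(b'\). The K\"unneth basis \(E(\varsigma f_1,\varsigma f_3,\varsigma f_7,\dots)\) of \(\pi_\ast\Phi^GH\mZ\), with \(|\varsigma f_{2^k-1}|=2^k\), has a single element in degree \(2^{j+1}\). Hence the nonzero class \(b'^{2^j}\) must be \(\varsigma f_{2^{j+1}-1}\). You then still have to identify \(x_{2^j}\) with the image of \(\varsigma f_{2^j-1}\) under the right unit, modulo the left-hand \(b\).

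Second, ``modulo decomposables'' has to mean ``modulo the span of exterior monomials of length \(\ge 2\) in the additive (K\"unneth) basis'' in every ring you pass through, not the ring-theoretic decomposables. In \(\pi_\ast(H\F_2\otimesover{MO}H\F_2)=\F_2[\varsigma x_2,\varsigma x_4,\dots]\) the class \(\varsigma x_{2i+1}\) is \((\varsigma x_i)^2\), so a congruence modulo decomposables there carries no information. Your closing claim that the indecomposables in degree \(2i+2\) are one-dimensional is also false: for \(2i+2\ge 4\) they vanish, in \(\pi_\ast\Phi^G\) as well. No filtration argument is needed on the target side. With \(x_i\in\pi_iMO\), the identity \((\varsigma x_i)^2=Q_0\varsigma x_i=\varsigma Q_1x_i=\varsigma x_{2i+1}\) holds on the nose, since \(\varsigma\) annihilates products. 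It is the coefficient of this length-one basis element that you compare. Your display \((\varsigma x_{i+1})^2=\varsigma x_{2i+2}\) is off by one and does not balance degrees.

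Two smaller corrections. First, the Euler classes satisfy \(a_{\Lambda_i}^2a_{\Ind_{C_2}^G\sigma}=a_{\Lambda_{2i+1}}\) exactly, with no \(b\)-adjustment. Second, for \(n\ge 2\) inverting \(a_\lambda\) is not \(\Phi^G\): it does not kill classes induced from nontrivial proper subgroups, and \(\pi_{-\Ind_{C_2}^G\sigma}H\mZ\cong\Z/2^n\) survives it. So \(\Phi^G\) determines the coefficient only mod \(2\), giving the relation up to an odd multiple. The induced summands should be discarded using the direct-sum splitting, not localization.
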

\begin{proof}
    This is the only option for degree reasons. 
\end{proof}

\begin{remark}
    The representation \(\Ind_{C_2}^{G}\sigma\) is the sum of all of the irreducible representations of \(G\) which are also free. These correspond to the different choices of a pair of a primitive \(2^n\)th root of unity and its conjugate, and so all give forms of \(\lambda\).
\end{remark}

\begin{remark}
    The multiplicative relations show that we cannot kill off a single \(\bar{r}_i\) from \(\Xin\) and get a ring. Instead, we have to kill off an entire infinite family to wipe out the entire polynomial subalgebra corresponding to these. This is even true for \(\MUR\). This is analogous to the \cite{BHLSZ:KTheory}, which showed that the \(\BPGm\) are essentially the only quotients of \(\BPG\) of that form which even have a chance to be rings.
\end{remark}

By induction, we see that for all \(K\subseteq G\), we have
\[
    \Phi^{K}\big(H\mZ\otimesover{\Xin}H\mZ\big)\simeq \Phi^{K}\big(H\mZ\otimesover{\Xi_m}H\mZ\big)\otimes\Sigma^{\infty}_{+}\prod_{i=1}^{2^{n-m}-1} BBO.
\]
The computation above suggests that we have an extra copy of \(BBO\) in the geometric fixed points, which would symmetrize this into
\[
    H\F_2[b]\otimes \Sigma^{\infty}_+ \prod_{i=1}^{2^{n-m}} BBO,
\]
which is equivalently
\[
    \Phi^K\left(H\mZ\otimes\Sigma^{\infty}_+\Map^{C_2}(G,B\BUR)\right).
\]

\begin{conjecture}\label{conj: Thom Version}
    We have an equivalence of \(\Map^{C_2}(G,E_{2+\sigma})\)-algebras
    \[
        H\mZ\smashover{\Xin} H\mZ\simeq H\mZ\otimes \Sigma^{\infty}_+\Map^{C_2}(G,B\BUR).
    \]
\end{conjecture}

\section{The absolute case}\label{sec:AbsoluteCase}

The relative homology fits into a pushout square of equivariant commutative ring spectra
\[
    \begin{tikzcd}
        {H\mF_2\otimes \Xin}
            \ar[r]
            \ar[d]
            &
        {H\mF_2\otimes H\mZ}
            \ar[d]
            \\
        {H\mF_2}
            \ar[r]
            &
        {H\mF_2\otimesover{\Xin}H\mZ.}
    \end{tikzcd}
\]
We have complete computational control over all but the top right term in this square. The Greenlees--Serre spectral sequence provides a way to leverage these to get the fourth.

\subsection{An equivariant Greenlees--Serre spectral sequence}

\begin{theorem}
    Let \(f\colon A\to R\) be a map of equivariant commutative ring spectra with \(A\) connective, let \(p\colon A\to P^0(A)\) be the zeroth slice section map, and let \(Q\) be the equivariant commutative ring spectrum defined by the pushout square
    \[
        \begin{tikzcd}
            {A} 
                \ar[r, "f"]
                \ar[d, "p"']
                &
            {R}
                \ar[d]
                \\
            {P^0(A)}
                \ar[r]
                &
            {Q.}
        \end{tikzcd}
    \]
    Let \(Gr(A)\) be the slice associated graded of \(A\), viewed as a commutative \(P^0(A)\)-algebra. Then we have a spectral sequence of graded Tambara functors with
    \[
        E_2^{(s,\m{t})}=\pi_{\m{t}-s}\big(Q\smashover{P^0(A)} Gr_{\dim \m{t}}(A)\big)\Rightarrow \pi_{\m{t}-s}(R).
    \]
    The differentials satisfy
    \[
        |d_r|=(-1,r).
    \]
    
    When \(R\) is also \(0\)-connective, this converges strongly.
\end{theorem}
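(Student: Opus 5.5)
The plan is to copy Greenlees' construction with the slice tower of \(A\) in place of its Postnikov tower. The slice tower \(\cdots\to P^{k}A\to P^{k-1}A\to\cdots\to P^0A\) is a tower of \(A\)-modules. For connective commutative \(A\) it is multiplicative: the slice filtration \(P_{k}A\) (the \(k\)-slice connective covers) gives a filtered \(G\)-commutative ring \(\{P_kA\}_{k\ge 0}\). The needed inputs are that slice \(\geq k\) and slice \(\geq \ell\) objects smash to slice \(\geq k+\ell\) (a known property of the slice filtration on connective objects), and that norms \(N_H^K\) of slice \(\geq k\) objects are slice \(\geq |K/H|k\). With these, \(P_kA\) forms a filtered \(G\)-commutative monoid, \(P_0A=A\), and the associated graded is \(Gr(A)=\bigoplus_k P^k_kA\). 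This is a graded commutative algebra over \(Gr_0(A)=P^0(A)\). I would record this first, citing the multiplicativity of the slice filtration from HHR and Ullman.

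Next I base change the filtered ring along \(f\). I define the filtered \(R\)-module \(F_k=R\otimesover{A}P_kA\). Its colimit in the downward direction is \(F_0=R\), and its associated graded is \(R\otimesover{A}Gr(A)\). Since \(Gr(A)\) is a \(P^0(A)\)-module via the commutative map \(A\to P^0A\), there is a shuffle equivalence
\[
R\otimesover{A}P^k_kA\simeq (R\otimesover{A}P^0A)\otimesover{P^0A}P^k_kA= Q\otimesover{P^0(A)}Gr_k(A).
\]
This uses the pushout square, which is a relative smash product for commutative rings. The exact couple of \(\mRO\)-graded homotopy Mackey functors attached to \(\{F_k\}\) gives the spectral sequence. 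I will index so that the slice \(k\) piece contributes to \(\m{t}\) with \(\dim\m{t}=k\), which matches the statement's \(E_2\)-term, and check that \(d_r\) has degree \((-1,r)\) as in an Adams-type grading. Multiplicativity, in the sense of a spectral sequence of graded Green functors, follows from the filtered commutative ring structure on \(F_\bullet\), since base change of a filtered commutative monoid along \(A\to R\) remains one. The norms and the Tambara structure come from the \(G\)-commutative structure, via the norm compatibility of the filtration noted above: norms multiply the filtration by the index, exactly as the grading dictates.

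Finally I address convergence when \(R\) is connective. The main obstacle here is that I need \(F_k\) to become increasingly highly connected in a uniform way, so that in each fixed degree \(\m{t}-s\) only finitely many filtrations contribute and \(\lim_k F_k\simeq 0\). Since \(P_kA\) is slice \(\ge k\), its underlying and fixed-point connectivity grows roughly like \(k/|G|\) (slice \(\ge k\) implies \((\lfloor k/|G|\rfloor)\)-connective). Smashing over connective \(A\) with connective \(R\) preserves this connectivity bound. So \(F_k\) is \(\lfloor k/|G|\rfloor-1\)-connected at every subgroup, which gives a finite filtration in each degree and hence strong convergence. I expect the delicate points to be making the multiplicative and norm structure on the slice filtration rigorous, rather than merely homotopy-coherent on pairs, and verifying that the connectivity estimate survives the relative smash product. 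I would handle the latter by writing \(R\otimesover{A}(-)\) as a bar construction of connective objects.
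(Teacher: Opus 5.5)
Your proposal is correct and is essentially the paper's proof. Both filter \(R\) by \(F_k=R\otimes_A P_kA\) and identify the associated graded with \(Q\otimes_{P^0(A)}P^k_kA\), since the \(A\)-action on each slice factors through \(P^0(A)\). Both read off the Tambara structure from the filtered \(G\)-commutative structure and get strong convergence from connectivity growing like \(k/|G|\); the only difference is that you use a bar construction where the paper uses the K\"unneth spectral sequence.

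One caution: you and the paper both assert, without proof, that the slice filtration is multiplicative and norm-compatible. This holds for Ullman's regular slice filtration but fails for the original one with cells \(G_+\otimes_H S^{m\rho_H-1}\). For \(G=C_2\), \(S^{\sigma}=S^{\rho_2-1}\) is slice \(\geq 1\), but \(\Phi^{C_2}S^{2\sigma}=S^0\) is not \(0\)-connected, whereas \(\Phi^{C_2}\) of every original slice cell of dimension \(\geq 2\) is. So the regular filtration is the one to use.
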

\begin{proof}
    Consider the slice filtration of \(A\), \(P_k(A)\), given by the slice covers of \(A\). Since \(A\) is \(0\)-connective, the natural map to the slice truncation
    \[
       A\to P^k A 
    \]
    is always a map of \(G\)-commutative \(A\)-algebras, and hence the filtration \(P_k(A)\) is actually a filtration in \(A\)-modules. This filtration is Hausdorff, in the sense that
    \[
        \lim_{\longleftarrow} P_k(A)\simeq 0,
    \]
    and of course, 
    \[
        A=P_0A.
    \]
    
    This gives a filtration of 
    \[
    R\simeq R\smashover{A} P_0(A),
    \]
    by taking 
    \[
    F_k(R)= R\smashover{A}P_k(A).
    \]
    The \(k\)th associated graded piece is
    \[
    R\smashover{A} P^k_k(A).
    \]
    By construction, the \(A\)-action on \(P^k_k(A)\) factors through \(P^0(A)\). In other words, we have an equivalence
    \[
    R\smashover{A} P^k_k(A)\simeq R\smashover{A} P^0(A)\smashover{P^0(A)} P^k_k(A).
    \]
    This means that the associated graded for \(R\) can be rewritten as 
    \[
    \big(R\smashover{A} P^0(A)\big)\smashover{P^0(A)} P^\ast_\ast(A)= Q\smashover{P^0(A)} P^\ast_\ast(A).
    \]
    
    Since the slice filtration of a \(G\)-commutative monoid is naturally a \(G\)-commutative monoid in filtered \(G\)-spectra, and since all maps considered are \(G\)-commutative ring maps, the induced filtration on \(R\) is a \(G\)-commutative monoid in filtered spectra. The spectral sequence is therefore a spectral sequence of graded Tambara functors.
    
    For convergence, we start with the observation that the connectivity of \(P_kA\) goes to infinity with \(k\) (at worst as about \(k/|G|\)). If \(R\) is connective, then the K\"unneth spectral sequence shows that the connectivity of \(R\otimesover{A} P_kA\) also goes to infinity, or equivalent, for any \(n\) and for all \(k>\!>0\), 
    \[
        \m\pi_n\big(R\otimesover{A}P_kA\big)=0.
    \]
    This gives strong convergence.
\end{proof}
    
There is a K\"unneth spectral sequence that allows us to compute the \(E_2\)-term of this Greenlees--Serre spectral sequence, due to Lewis--Mandell \cite{LewisMandell}. We have
\[
E_2^{V,s}=\Tor^{-s}_{\pi_\mstar P^0(A)}\big(\pi_\mstar Q,\pi_\mstar Gr_k(A)\big)_{V}\Rightarrow \pi_{V-s}\big(Q\smashover{P^0(A)} Gr_k(A)\big).
\]
In our case, however, things are greatly simplified. Since \(H\mF_2\) is \(\Xin\)-oriented, Corollary~\ref{cor:Homotopy of EXin} shows that we have an equivalence of \(H\mF_2\)-associative algebras
\[
    H\mF_2\otimes A_n^h\simeq H\F_2\otimes\Xin.
\]
This is a wedge of slice spheres tensored with \(H\mF_2\), and hence is a wedge of slices, so it is its own slice associated graded.

\begin{proposition}
    The zero slice of \(H\mF_2\otimes \Xin\) is \(H\mF_2\), and the slice associated graded is
    \[
    H\mF_2\otimes A^h_n,
    \]
    where the grading on \(A^h_n\) is the usual monomial grading.
\end{proposition}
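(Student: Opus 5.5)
The plan is to use Corollary~\ref{cor:Homotopy of EXin} with \(E=H\mF_2\). That corollary gives an equivalence of associative \(H\mF_2\)-algebras
\[
H\mF_2\otimes A^h_n\simeq H\mF_2\otimes\Xin,
\]
with \(A^h_n=\bigotimes_{i\geq1}N_{C_2}^G\mathbb S^0[S^{i\rho_2}]\). The first step is to unpack \(A^h_n\) as a spectrum. Using the distributive law for norms, exactly as in Corollary~\ref{cor: Norm of Exterior Alg} (but with the free associative algebra \(\bigoplus_k S^{ki\rho_2}\) in place of \(\mathbb S^0\oplus\mathbb S^{i\rho_2+1}\)), \(A^h_n\) splits as a wedge indexed by \(G\)-orbits of monomials in the \(\gamma^j\bar{r}_i\). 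The summands have the form \(G_+\otimesover{H}S^{W}\), where \(W=\Ind_{C_2}^{H}(k\rho_2)\) is a sum of multiples of \(\rho_H\)-type induced regular representations. In particular each summand is an induced slice sphere \(G_+\otimesover{H}S^{m\rho_H}\) of dimension \(m|H|\), which is the monomial degree.

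The second step invokes the known fact from \cite{HHR} that \(H\mZ\)- or \(H\mF_2\)-modules of the form \(H\mF_2\otimes G_+\otimesover{H}S^{m\rho_H}\) are pure slices of dimension \(m|H|\). Since slice categories are closed under wedges, the submodule spanned by monomials of total degree \(k\) is a \(k\)-slice. Collecting terms by degree, \(H\mF_2\otimes A^h_n\simeq\bigoplus_k(H\mF_2\otimes A^h_n)_k\), with the \(k\)th summand a \(k\)-slice. A wedge of slices in distinct dimensions, locally finite in each dimension, has slice tower equal to the tower of partial sums. Hence \(P^k_k\) is the degree-\(k\) piece, and the associated graded is \(H\mF_2\otimes A^h_n\) with the monomial grading. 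The degree-\(0\) monomial is the unit, so \(P^0(H\mF_2\otimes\Xin)\simeq H\mF_2\).

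The main obstacle is compatibility with multiplication. The statement implicitly regards the associated graded as a ring, so one must check that the product on \(Gr\) induced from the \(G\)-commutative structure on \(H\mF_2\otimes\Xin\) matches the monomial product on \(A^h_n\). My approach here is to use that the equivalence of Corollary~\ref{cor:Homotopy of EXin} is multiplicative, so the slice filtration, computed via the splitting, is the monomial-degree filtration. The multiplication on the associated graded is then determined by the associative algebra structure, which is already graded. Any discrepancy between the \(E_\infty\) product on \(H\mF_2\otimes\Xin\) and the associative product transported from \(A^h_n\) lies in higher filtration and so vanishes on \(Gr\). If full commutative structure is needed, I would appeal to \(\Xin\) being an \(E_\infty\) Thom spectrum and the Thom diagonal from Proposition~\ref{prop:Thom Diagonal}, which is multiplicative.
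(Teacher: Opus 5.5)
Your proposal is correct and follows the paper's argument: the paper also uses Corollary~\ref{cor:Homotopy of EXin} to identify \(H\mF_2\otimes\Xin\) with \(H\mF_2\) smashed with a wedge of induced slice spheres. That is a wedge of slices, hence its own slice associated graded, with \(P^0=H\mF_2\). You supply details the paper leaves implicit: the distributive-law decomposition into \(G_+\otimesover{H}S^{m\rho_H}\), the HHR fact that these become \(m|H|\)-slices after smashing with \(H\mF_2\), and the wedge-of-slices argument. Your paragraph on multiplicative compatibility goes beyond anything the paper argues at this point.
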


For us, the key fact is that this is a free \(H\mF_2\)-module. In particular, smashing this over \(H\mF_2\) is easy to understand.

\begin{proposition}
    For any \(0\)-connective \((H\mF_2\otimes\Xin)\)-algebra \(R\), the associated graded for \(R\) is
    \[
    \big(R\smashover{H\mF_2\otimes\Xin}H\mF_2\big)\otimes A^h_n.
    \]
\end{proposition}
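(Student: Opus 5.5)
We apply the proof of the equivariant Greenlees--Serre theorem with \(A=H\mF_2\otimes\Xin\) and the given \(R\). By the preceding proposition, \(P^0(A)\simeq H\mF_2\) and the slice associated graded is \(H\mF_2\otimes A^h_n\). Since \(R\) is a \(0\)-connective \(A\)-algebra, that proof filters \(R\) by \(F_k(R)=R\otimesover{A}P_k(A)\). The associated graded pieces are
\[
    R\smashover{A}P^k_k(A)\simeq \big(R\smashover{A}P^0(A)\big)\smashover{P^0(A)}P^k_k(A).
\]
Here we use that the \(A\)-action on each slice factors through \(P^0(A)\), which holds because \(A\) is connective. With \(P^0(A)=H\mF_2\), the first factor is \(Q=R\otimesover{H\mF_2\otimes\Xin}H\mF_2\). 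So it remains to identify \(Q\otimesover{H\mF_2}\big(\bigoplus_k P^k_k(A)\big)\).

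Next we identify \(\bigoplus_k P^k_k(A)\) as an \(H\mF_2\)-module. Corollary~\ref{cor:Homotopy of EXin} gives an equivalence of associative \(H\mF_2\)-algebras \(H\mF_2\otimes A^h_n\simeq H\mF_2\otimes\Xin\). Here \(A^h_n\) is a wedge of spectra \(G_+\otimesover{H}S^{V}\), with \(V\) an induced sum of copies of \(i\rho_2\); these are slice spheres of dimension \(\dim V\). Tensoring with \(H\mF_2\) then gives slices, so the monomial grading of \(A^h_n\) is exactly the slice grading. In particular \(P^k_k(A)\simeq H\mF_2\otimes (A^h_n)_k\), where \((A^h_n)_k\) is the wedge of summands of monomial (underlying) dimension \(k\). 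We must check that the slice tower of a wedge of slices of different dimensions splits, with \(P_k\) the sub-wedge of summands of dimension \(\geq k\). This is a formal property of the slice filtration, since wedges commute with slice covers and sections.

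Finally, because \(P^k_k(A)\) is a free \(H\mF_2\)-module on \((A^h_n)_k\), the change of rings gives
\[
    Q\smashover{H\mF_2}\big(H\mF_2\otimes (A^h_n)_k\big)\simeq Q\otimes (A^h_n)_k .
\]
Summing over \(k\) yields \(\big(R\otimesover{H\mF_2\otimes\Xin}H\mF_2\big)\otimes A^h_n\). The Künneth spectral sequence is not needed here: freeness makes the relative tensor product a genuine equivalence of spectra, not just of \(E_2\)-terms.

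I expect the main obstacle to be the middle step. The equivalence of Corollary~\ref{cor:Homotopy of EXin} is only an equivalence of associative algebras and of left \(H\mF_2\)-modules, and it is not the algebra map coming from \(\Xin\). So we must check that the module structure relevant to the pushout, namely \(P^0(A)=H\mF_2\) acting on the slices, agrees with the free left \(H\mF_2\)-module structure used in the splitting. I would handle this by noting that everything happens inside \(H\mF_2\)-modules through the left factor. The map \(A\to P^0(A)=H\mF_2\) restricted to that left factor is the identity, and the slices are \(H\mF_2\)-modules via that factor. The associated graded statement is additive, so it is insensitive to the twisted algebra structure.
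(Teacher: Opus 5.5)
Your proposal is correct and follows the route the paper has in mind. You feed \(P^0(A)\simeq H\mF_2\) and the slice associated graded \(H\mF_2\otimes A^h_n\) into the associated graded \(Q\smashover{P^0(A)}Gr(A)\) from the equivariant Greenlees--Serre construction, then use freeness over \(H\mF_2\) to get \(Q\otimes A^h_n\). Your extra checks fill in points the paper leaves implicit: that slice covers and sections commute with wedges, and that the relevant \(H\mF_2\)-module structure comes from the left unit, for which Corollary~\ref{cor:Homotopy of EXin} gives an equivalence of modules.
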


\begin{corollary}
    The Greenlees--Serre associated graded for \(H\mF_2\otimes H\mZ\) is
    \[
    \pi_{\mstar}\big(H\mF_2\smashover{\Xin} H\mZ\big)[G\cdot \bar{m}_1,\dots].
    \]
\end{corollary}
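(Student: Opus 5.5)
The plan is to specialize the preceding proposition to the pushout square at the start of Section~\ref{sec:AbsoluteCase}. Take \(A=H\mF_2\otimes\Xin\) and \(R=H\mF_2\otimes H\mZ\), with \(f\) the map induced by the Thom reduction \(\Xin\to H\mZ\). The zero slice section is \(P^0(A)=H\mF_2\), so the pushout is \(Q=H\mF_2\otimesover{\Xin}H\mZ\). This \(Q\) agrees with \(R\otimesover{A}H\mF_2\) by the change-of-rings equivalence
\[
(H\mF_2\otimes H\mZ)\smashover{H\mF_2\otimes\Xin}H\mF_2\simeq H\mF_2\smashover{\Xin}H\mZ,
\]
which is the same shuffle used in the proof of Proposition~\ref{prop:GeneralRelativeCase}. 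Note that \(R\) is \(0\)-connective, being a smash product of connective spectra, so it is a \(0\)-connective \(A\)-algebra and the previous proposition applies. That proposition gives the associated graded
\[
Q\smashover{H\mF_2}\big(H\mF_2\otimes A^h_n\big)\simeq Q\otimes A^h_n .
\]

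Next I will compute the homotopy of this associated graded. Since \(A^h_n\) is a wedge of spectra \(G_+\otimesover{H}S^V\), the spectrum \(Q\otimes A^h_n\) is a wedge of induced suspensions of \(Q\). Its \(\mRO\)-graded homotopy is therefore free over \(\pi_{\mstar}Q\), with no \(\Tor\) terms, so the Lewis--Mandell K\"unneth spectral sequence collapses trivially. The basis is indexed by monomials in the \(G\)-orbits of the generators. Corollary~\ref{cor:Homotopy of EXin} and Theorem~\ref{thm:Splitting Oriented Homology} identify these generators with the \(\bar{m}_i\), with \(G\) acting on them as in \(\pi_{\mstar}H\mF_2[G\cdot\bar m_1,\dots]\). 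The result is
\[
\pi_{\mstar}\big(H\mF_2\smashover{\Xin}H\mZ\big)[G\cdot\bar m_1,\dots],
\]
where the bracket means the twisted polynomial (free graded Tambara) extension, matching the notation used for \(H\mF_{2\mstar}[G\cdot\bar m_1,\dots]\) earlier.

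The main obstacle is making the word ``polynomial'' honest multiplicatively, beyond the additive identification. Two things need checking. First, the slice grading on \(A^h_n\) must be the monomial grading, so that the product on the associated graded is the product on \(H\mF_2\otimes A^h_n\). Second, that product must be commutative on homotopy, with norms realized by the classes \(N_{C_2}^H\bar m_i\), even though \(A^h_n\) itself was only presented as an associative algebra.

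I would handle both points through the identification \(H\mF_2\otimes A^h_n\simeq H\mF_2\otimes\Xin\). The right-hand side is \(G\)-commutative and is its own slice associated graded, so the product structure transfers along this equivalence. Its homotopy is the free commutative (Tambara) algebra on the \(\bar m_i\) by the Thom isomorphism of Proposition~\ref{prop:Thom Diagonal} and \cite{HillFreeness}. Base change of a free algebra along \(H\mF_2\to Q\) then stays free, which completes the identification.
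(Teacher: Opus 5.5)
Your proposal is correct and follows the paper's route: specialize the preceding proposition to \(R=H\mF_2\otimes H\mZ\), identify \(R\otimesover{H\mF_2\otimes\Xin}H\mF_2\) with \(H\mF_2\otimesover{\Xin}H\mZ\) via the pushout square, and read off the homotopy of \(Q\otimes A^h_n\) from the freeness of \(H\mF_2\otimes A^h_n\simeq H\mF_2\otimes\Xin\). Your additional discussion of the multiplicative (Tambara) structure fills in details the paper leaves implicit, but it does not change the argument.
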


\subsection{Proof of concept: the \texorpdfstring{\(C_2\)}{C-two} Steenrod Algebra}
Using the Greenlees--Serre spectral sequence, we can give a new approach to the \(C_2\)-equivariant dual Steenrod algebra, complementing the foundational work of Hu--Kriz \cite{HuKriz}. We begin with some easily computational corollaries from our previous computations. We work here over \(\BPR\) to simplify our computations; work of Roytman showed this is \(E_{2\rho}\).

First note that for \(C_2\), our decomposition underlying the relatie smash product is
\[
H\mF_2\smashover{\BPR} H\mZ\simeq H\mF_2\otimes\Sigma^{\infty}_{+}\left(\prod_{i=1}^{\infty} S^{(2^i-1)\rho_2+1}\right).
\]
When we decompose the Cartesian product, this is just a wedge of ordinary \(C_2\)-representation spheres. Put another way, a basis for the \(RO\)-graded homology occurs all in level \(C_2\), so we may without loss of generality work instead with an \(RO(C_2)\)-grading. 

\begin{proposition}
    The \(\BPR\)-relative homology \(H\mF_2\smashover{\BPR} H\mZ\) is
    \[
        \mathbb M_2[\bar{\tau}_1,\dots]/(\bar{\tau}_i^2-a_\sigma\bar{\tau}_{i+1},\forall i\geq 1),
    \]
    where the degree of \(\bar{\tau}_i\) is \(\big((2^i-1)\rho_2+1\big)\).
\end{proposition}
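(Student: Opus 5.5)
Additively the answer is read off from the splitting displayed just before the statement. Expanding \(\Sigma^\infty_+\) of the product \(\prod_i S^{(2^i-1)\rho_2+1}\) gives a wedge of representation spheres, one for each finite subset \(I\subset\mathbb N\), in degree \(\sum_{i\in I}\big((2^i-1)\rho_2+1\big)\). So \(\pi_\star\big(H\mF_2\smashover{\BPR}H\mZ\big)\) is a free \(\mathbb M_2\)-module on the monomials \(\prod_{i\in I}\varsigma\bar r_{2^i-1}\). I will set \(\bar\tau_i=\varsigma\bar r_{2^i-1}\), the homology suspension of the generator killed by Thom reduction, which has the stated degree. The Künneth spectral sequence for \(H\mF_2\otimes(\mathbb S^0\otimesover{A}\mathbb S^0)\) collapses, and its associated graded is the exterior algebra \(E_{\mathbb M_2}(\bar\tau_1,\bar\tau_2,\dots)\). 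The proposition's ring is free over \(\mathbb M_2\) on exactly the square-free monomials in the \(\bar\tau_i\), using the relation to eliminate squares. So it has the correct additive size, and it remains to identify the multiplicative extensions \(\bar\tau_i^2\).

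First I pin down \(\bar\tau_i^2\) up to a unit by degrees. The degree \(2|\bar\tau_i|=(2^{i+1}-2)\rho_2+2\) differs from \(|\bar\tau_{i+1}|=(2^{i+1}-1)\rho_2+1\) by exactly \(\sigma\) (since \(\rho_2=1+\sigma\)). So \(a_\sigma\bar\tau_{i+1}\) lies in the correct degree. I will list the other possibilities by writing \(\bar\tau_i^2\) in the free basis as \(\sum_I c_I\bar\tau_I\), with \(c_I\in\pi_{\star}H\mF_2=\mathbb M_2\) in degree \(2|\bar\tau_i|-|\bar\tau_I|\). Using the known shape of \(\mathbb M_2\) (the positive cone \(\F_2[a_\sigma,u_\sigma]\) and the negative cone), I will check that the only nonzero option is \(c=a_\sigma\) on \(\bar\tau_{i+1}\), possibly plus decomposable terms that I expect the degree count to rule out. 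This reduces the proposition to showing the coefficient is nonzero.

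To show nonvanishing, I will detect it in geometric fixed points, that is, after inverting \(a_\sigma\). The geometric fixed points proposition and the following computation apply with \(G=C_2\). There \(\Phi^{C_2}\) of the relative smash product is \(H\F_2[b]\otimesover{MO}H\F_2[b']\), and only the \(MO\) classes \(x_{2^{i+1}-2}\) are relevant after passing to \(\BPR\). That computation, using Kochman's formula \(Q_1x_j\equiv x_{2j+1}\) and \(Q_0(\varsigma x)=(\varsigma x)^2\), shows \((\varsigma x_j)^2=\varsigma x_{2j+1}\neq 0\). The class \(\bar\tau_i\) maps to \(\varsigma\) of a fixed-point generator in degree \(2^i\), and \(\bar\tau_{i+1}\) maps to the corresponding one in degree \(2^{i+1}\). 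So in \(a_\sigma^{-1}\) homotopy, \(\bar\tau_i^2\) is a unit multiple of \(\bar\tau_{i+1}\), with coefficient \(a_\sigma\) after matching degrees. Since \(\mathbb M_2\) has no \(a_\sigma\)-torsion in the relevant positive-cone degrees, the coefficient in the nonlocalized ring must be exactly \(a_\sigma\).

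The main obstacle is this fixed-point identification. I must verify that the Hurewicz and Thom-reduction compatibility sends \(\Phi^{C_2}\bar\tau_i\) to \(\varsigma\) of the correct \(MO\) generator modulo decomposables, and that the squaring relation is not disturbed by correction terms. If that comparison is too delicate, my fallback is to invoke Theorem 5.9 of \cite{HillFreeness} with \(K=C_2\). It gives exactly the relation \(\bar y_i^2=a_\sigma\bar y_{2i+1}\) in the homology of \(B\BUR\), and I would transport it along a comparison map that matches \(\bar y_{2^i-1}\) with \(\bar\tau_i\), modulo the indexing.
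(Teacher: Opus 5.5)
Your outline has the same shape as the paper's proof: additive freeness from the splitting, a degree count isolating \(a_\sigma\bar\tau_{i+1}\), and detection after inverting \(a_\sigma\). The degree count you postpone does go through. A positive-cone coefficient forces \(I=\{i+1\}\). A negative-cone coefficient would need \(|I|\geq 2m+2\) with \(\sum_{k\in I}2^k=2^{i+1}+m\) and \(m\geq 2\). That is impossible, since \(2^{i+1}+m\) has at most \(1+m<|I|\) nonzero binary digits.

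There is a genuine gap in the detection step, which is the only real content of the proof. You import the \(\Xin\)-relative formula \(H\F_2[b]\otimesover{MO}H\F_2[b']\) and Kochman's relation \((\varsigma x_j)^2=\varsigma x_{2j+1}\). But \(\Phi^{C_2}\BPR\simeq H\F_2\), so no \(MO\) appears here at all. Even over \(\MUR\), Kochman's relation only concerns \(\varsigma x_j\) with \(j\neq 2^k-1\). Your classes are \(\bar\tau_i=\varsigma\bar r_{2^i-1}\), and the fixed-point class \(a_\sigma^{2^i-1}\bar r_{2^i-1}\) maps to zero in \(\pi_*MO\). So there is no \(x_{2^i-1}\) to suspend, and the classes \(\varsigma x_{2^{i+1}-2}\) you name sit in degree \(2^{i+1}-1\), not \(2^i\). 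That input therefore says nothing about \(\bar\tau_i^2\).

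Your fallback also fails. Theorem 5.9 of \cite{HillFreeness} describes the ring \(H_\star(B\BUR)\), but you have no ring map comparing it with \(H\mF_2\smashover{\BPR}H\mZ\). The splitting through \(\mathbb S^0\smashover{A}\mathbb S^0\) is only additive, and a multiplicative identification of this kind is exactly what the paper leaves as a conjecture.

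The repair is short and is what the paper does. Since \(\Phi^{C_2}\BPR\simeq H\F_2\), you get \(\Phi^{C_2}\big(H\mF_2\smashover{\BPR}H\mZ\big)\simeq H\F_2[b]\smashover{H\F_2}H\F_2[{b'}^2]\). Its homotopy \(\F_2[b,{b'}^2]\) is a polynomial ring, hence a domain. The image of \(\bar\tau_i\) there (that is, of \(a_\sigma^{2^i-1}\bar\tau_i\), in degree \(2^i\)) is nonzero, because \(\bar\tau_i\) is a basis element of a free \(\mathbb M_2\)-module and \(a_\sigma^{-1}\mathbb M_2\neq 0\). Hence its square is nonzero, so \(\bar\tau_i^2\neq 0\). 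By your degree count, the coefficient in \(\pi_{-\sigma}\mathbb M_2=\F_2\{a_\sigma\}\) must then be \(a_\sigma\).

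Equivalently, comparing ranks degree by degree forces the fixed-point image of \(\bar\tau_i\) to be \({b'}^{2^i}\) modulo multiples of \(b\). The squaring relation comes from \(\Phi^{C_2}H\mZ\) being polynomial on the degree-\(2\) class \({b'}^2\), not from Dyer--Lashof operations in \(H_*BO\).
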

\begin{proof}
    The only thing to check is the multiplicative relation. Passing to geometric fixed points, we have as a ring
    \[
        \Phi^{C_2}\big(H\mF_2\smashover{\BPR} H\mZ\big)\simeq H\F_2[b]\smashover{H\F_2} H\F_2[{b'}^2],
    \]
    where \(b\) and \(b'\) have degree \(1\). The geometric fixed points of the class \(\bar{\tau}_i\) is in degree \(2^i\), so we see that all of the classes \(\bar{\tau}_i\) must have multiplicative relations that rebuild a polynomial ring. By considering the bidegrees, the only possible relations are that the ring structure is as listed.
\end{proof}

There is also a multiplicative relation when we reduce modulo \(2\). The proof is essentially identical.

\begin{proposition}
    The \(\BPR\)-relative homology \(H\mF_2\smashover{\BPR} H\mF_2\) is
    \[
        \mathbb M_2[\bar{\tau}_0,\dots]/(\bar{\tau}_i^2-a_\sigma\bar{\tau}_{i+1},\forall i\geq 0),
    \]
    where the degree of \(\bar{\tau}_i\) is \(\big((2^i-1)\rho_2+1\big)\).
\end{proposition}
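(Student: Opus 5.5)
We will follow the pattern of the previous proposition, after first computing \(H\mF_2\smashover{\BPR}H\mF_2\) additively. Base change along \(H\mZ\to H\mF_2\) gives
\[
H\mF_2\smashover{\BPR} H\mF_2\simeq \big(H\mF_2\smashover{\BPR} H\mZ\big)\smashover{H\mZ} H\mF_2 .
\]
Write \(H\mF_2\simeq H\mZ\otimes\mathbb S^0/2\) as an \(H\mZ\)-module. The mod \(2\) Moore spectrum contributes an additional factor \(\Sigma^\infty_+ S^1\), exactly as in Theorem~\ref{thm:modTwoAdditiveRelativeSteenrod}. This gives
\[
H\mF_2\smashover{\BPR} H\mF_2\simeq H\mF_2\otimes\Sigma^\infty_+\Big(S^1\times\prod_{i\geq1}S^{(2^i-1)\rho_2+1}\Big).
\]
The new \(S^1\) factor supplies a class \(\bar\tau_0\) in degree \(1=(2^0-1)\rho_2+1\). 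It is the homology suspension of \(2\), or equivalently the Bockstein class. The remaining classes \(\bar\tau_i\) for \(i\geq1\) are the images of the classes from the \(H\mZ\) case. Every cell lives at level \(C_2/C_2\), so the homotopy is free over \(\mathbb M_2\) on the monomials \(\bar\tau_0^{\epsilon_0}\bar\tau_1^{\epsilon_1}\cdots\) with \(\epsilon_j\in\{0,1\}\). This is an exterior algebra additively.

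Next we handle the multiplicative structure. The map \(H\mF_2\smashover{\BPR}H\mZ\to H\mF_2\smashover{\BPR}H\mF_2\) is a ring map. By the preceding proposition, the relations \(\bar\tau_i^2=a_\sigma\bar\tau_{i+1}\) for \(i\geq1\) therefore hold in the target. The one genuinely new relation to prove is \(\bar\tau_0^2=a_\sigma\bar\tau_1\).

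For this we pass to geometric fixed points, as in the previous proof. Since \(\Phi^{C_2}\) is symmetric monoidal,
\[
\Phi^{C_2}\big(H\mF_2\smashover{\BPR}H\mF_2\big)\simeq \Phi^{C_2}H\mF_2\smashover{\Phi^{C_2}\BPR}\Phi^{C_2}H\mF_2 .
\]
Here \(\Phi^{C_2}H\mF_2=H\F_2[b]\) with \(|b|=1\), and \(\Phi^{C_2}\BPR\) maps to \(MO\) with image detected as in the \(\mZ\) computation. The right-hand copy now contributes \(H\F_2[b']\) rather than \(H\F_2[{b'}^2]\). Thus as a ring we should get
\[
H\F_2[b]\smashover{H\F_2}H\F_2[b'] .
\]
The geometric fixed points of \(\bar\tau_i\) lie in degree \(2^i\). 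Inverting \(a_\sigma\) turns the exterior \(\mathbb M_2\)-basis into an \(\F_2[b]\)-basis in degrees \(\sum\epsilon_j2^j\). Matching this with \(\F_2[b][b']\) forces \(\Phi(\bar\tau_i)\equiv {b'}^{2^i}\) modulo \(b\)-multiples, and in particular forces \(\bar\tau_0^2\) to equal \(\bar\tau_1\) after inverting \(a_\sigma\).

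Finally we lift this back to a relation in \(RO(C_2)\)-grading by a degree argument. The element \(\bar\tau_0^2\) lies in degree \(2\). We enumerate the \(\mathbb M_2\)-multiples of basis monomials landing in that degree. The only candidates are \(a_\sigma\bar\tau_1\), in degree \(\rho_2+1-\sigma=2\), and \(\mathbb M_2\)-multiples of \(\bar\tau_0\) and \(1\). Such multiples would be \(u_\sigma\bar\tau_0\) (degree \(2-\sigma\), which does not match) and elements of \(\pi_2\mathbb M_2=0\). Hence \(\bar\tau_0^2=c\,a_\sigma\bar\tau_1\) with \(c\in\F_2\). The geometric fixed point computation shows \(c=1\). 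This gives the stated presentation, since \(\bar\tau_0\) together with the relations generates all the \(\bar\tau_i\) compatibly with the basis.

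The main obstacle is the claim that the right-hand factor is \(H\F_2[b']\), which is what makes \(\bar\tau_0^2\) nonzero. To establish it we would first rerun the \cite{HHR} computation of \(H\F_2\otimesover{\Phi\BPR}H\F_2\) with mod \(2\) coefficients. As a fallback, we could use that the underlying nonequivariant relation \(\tau_0^2=\tau_1\cdot(\text{unit})\) fails but \(a_\sigma\) detects the Bockstein structure, and instead invoke the Hu--Kriz relation in the absolute dual Steenrod algebra through the ring map \(H\mF_2\otimes H\mF_2\to H\mF_2\smashover{\BPR}H\mF_2\).
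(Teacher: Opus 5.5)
Your proposal is correct and is essentially the paper's argument. The paper proves this exactly as for \(H\mF_2\smashover{\BPR}H\mZ\): it uses that \(\Phi^{C_2}\) of the relative tensor product is polynomial on two degree-\(1\) classes \(b,b'\), that \(\Phi^{C_2}(\bar\tau_i)\) sits in degree \(2^i\), and then pins down the relations by a bidegree count. That is what you do, with the harmless refinement of importing the \(i\geq 1\) relations along the ring map from the \(H\mZ\) case and isolating \(\bar\tau_0^2=a_\sigma\bar\tau_1\). The step you flag as the main obstacle is immediate, since \(\Phi^{C_2}\BPR\simeq H\F_2\) and \(\pi_*\Phi^{C_2}H\mF_2=\F_2[b]\) is already polynomial, so the K\"unneth theorem over \(H\F_2\) gives \(\F_2[b,b']\); the same bidegree count, together with nonvanishing on \(\Phi^{C_2}\), is also what justifies your otherwise unargued claim that the products \(\bar\tau_0^{\epsilon_0}\bar\tau_1^{\epsilon_1}\cdots\) themselves, and not merely some classes in those degrees, form the \(\mathbb M_2\)-basis.
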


Similarly, the homology of \(\BPR\) is
\[
    H_\star \BPR\cong \mathbb M_2[\bar{\xi}_1,\dots],
\]
where \(\bar{\xi}_i\) is in degree \((2^i-1)\rho_2\). This gives us the needed input for the Greenlees spectral sequence.

\begin{theorem}
    The Greenlees--Serre spectral sequence computing \(H\mF_2\otimes H\mZ\) collapses at 
    \[
        E_2=\mathbb M_2[\bar{\xi}_1,\dots]\otimesover{\mathbb M_2}\mathbb M_2[\bar{\tau}_1,\dots]/\bar{\tau}_i^2-a_{\sigma}\bar{\tau}_{i+1},
    \]
    where
    \[
        \mathbb M_2=\pi_{\star}H\F_2
    \]
    is the \(RO(C_2)\)-graded homology of a point. There are no additive extensions.
\end{theorem}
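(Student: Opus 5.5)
We apply the equivariant Greenlees--Serre spectral sequence to the pushout square of commutative rings
\[
    H\mF_2\otimes\BPR\to H\mF_2\otimes H\mZ,\qquad H\mF_2\otimes\BPR\to H\mF_2\to H\mF_2\smashover{\BPR}H\mZ,
\]
that is, with \(A=H\mF_2\otimes\BPR\), \(R=H\mF_2\otimes H\mZ\) and \(P^0(A)=H\mF_2\). This uses the \(C_2\), \(\BPR\) analogue of the Corollary identifying the Greenlees--Serre associated graded. By the Thom isomorphism, \(H\mF_2\otimes\BPR\) is a wedge of slice spheres \(H\mF_2\otimes S^{k\rho_2}\), one for each monomial in the \(\bar{\xi}_i\). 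Hence its slice associated graded is \(H\mF_2\otimes A^h\), and it is free over \(H\mF_2\). The \(E_2\)-term is therefore
\[
    \pi_\star\big(H\mF_2\smashover{\BPR}H\mZ\big)\otimes_{\mathbb M_2}\mathbb M_2[\bar{\xi}_1,\dots].
\]
Substituting the previous Proposition for the relative homology gives exactly the displayed \(E_2\)-term. Convergence is strong because \(R\) is connective.

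For collapse, the cleanest route is to check that every \(E_2\) generator is a permanent cycle and then use multiplicativity. The spectral sequence is one of graded Tambara functors, in particular of \(\mathbb M_2\)-algebras, so this suffices. The classes \(\bar{\xi}_i\) are permanent cycles: they are images of actual classes under the ring map \(H\mF_2\otimes\BPR\to H\mF_2\otimes H\mZ\), which is the edge map. The classes \(\bar{\tau}_i\), for \(i\geq1\), sit in filtration zero. They lift to genuine elements of \(\pi_\star(H\mF_2\otimes H\mZ)\) via the homology suspension of \(\bar{v}_i\), equivalently \(\bar{r}_{2^i-1}\). This lift is the Hurewicz computation in Corollary~\ref{cor: Hu--Kriz Result}: the \(\bar{r}_{2^i-1}\) map to zero in \(H\mF_2\otimes\BPR\), so they give Toda-bracket or null-homotopy classes in \(H\mF_2\otimes H\mZ\), and these restrict to \(\bar{\tau}_i\) in the quotient \(Q\). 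Alternatively, the \(E_2\)-term may be concentrated in degrees where the targets of \(d_r\) (of degree \((-1,r)\)) vanish, but I do not expect that to hold uniformly, since \(\mathbb M_2\) has a negative cone.

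As an independent check, and as a replacement for the lifting argument if it proves delicate, compare sizes with Corollary~\ref{cor: Hu--Kriz Result}. That corollary already computes \(\pi_\star(H\mF_2\otimes H\mZ)\) additively as \(\mathbb M_2[\bar{\xi}_1,\dots]\otimes E_{\mathbb M_2}(\tau_1,\dots)\), specialised to \(H\mZ\) so that \(\tau_0\) is absent. Filter the \(E_2\)-term by powers of \(a_\sigma\), or simply pass to the underlying and geometric fixed points. The algebra \(\mathbb M_2[\bar{\tau}_i]/(\bar{\tau}_i^2-a_\sigma\bar{\tau}_{i+1})\) is free over \(\mathbb M_2\) on the square-free monomials in the \(\bar{\tau}_i\). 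So \(E_2\) is a free \(\mathbb M_2\)-module on a basis in bijection with the free basis of the abutment, with matching degrees. Any nonzero differential would strictly shrink \(E_\infty\) in some degree, and a free \(\mathbb M_2\)-module of finite type in each degree cannot then be rebuilt by extensions. Making this counting rigorous over \(\mathbb M_2\) is the main obstacle. I would do it levelwise in each fixed \(RO(C_2)\)-degree, where everything is a finite-dimensional \(\F_2\)-vector space, comparing Poincaré series in bidegree.

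Finally, the absence of additive extensions follows from this freeness. \(E_\infty=E_2\) is a free \(\mathbb M_2\)-module. Choosing lifts of the basis, namely monomials in the \(\bar{\xi}_i\) times square-free monomials in the lifted \(\bar{\tau}_i\), gives an \(\mathbb M_2\)-module map from a free module to \(\pi_\star(H\mF_2\otimes H\mZ)\). This map is an isomorphism on associated graded, hence an isomorphism, because the filtration is strongly convergent and finite in each degree.
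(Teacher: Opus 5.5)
Your proof is correct, but for the key step (showing the \(\bar\tau_i\) are permanent cycles) you take a different route from the paper.

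The paper inducts on \(i\). The base case uses that the Hurewicz image of \(\bar v_1\) vanishes and that a differential on \(\bar\tau_1\) could only hit a multiple of \(\bar\xi_1\). The inductive step uses the relation \(\bar\tau_{k-1}^2=a_\sigma\bar\tau_k\) in the relative homology, so that \(a_\sigma d_r(\bar\tau_k)=0\), together with the assertion that every possible target of \(d_r(\bar\tau_k)\) is \(a_\sigma\)-torsion free.

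You instead show that each \(\bar\tau_i\) lies in the image of the edge map \(\pi_\star(H\mF_2\otimes H\mZ)\to\pi_\star(H\mF_2\smashover{\BPR}H\mZ)\), so it is a permanent cycle outright. Concretely:
\begin{enumerate}
\item Since \(1\otimes\bar v_i=0\) in \(\pi_\star(H\mF_2\otimes\BPR)\), the cofiber sequence of \(\bar v_i\) splits after smashing with \(H\mF_2\).
\item A lift of the top cell in \(H\mF_2\otimes\BPR/\bar v_i\) then maps to \(H\mF_2\otimes H\mZ\).
\item Its image under \(H\mF_2\smashover{\BPR}\BPR/\bar v_i\to H\mF_2\smashover{\BPR}H\mZ\) is exactly \(\varsigma\bar v_i=\bar\tau_i\), because \(\pi_{(2^i-1)\rho_2+1}H\mF_2=0\).
\end{enumerate}

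Your argument is uniform in \(i\). It uses neither the multiplicative relations in the relative homology nor any analysis of potential targets. It also makes visible that the transgression of \(\varsigma\bar v_i\) is the mod \(2\) Hurewicz image of \(\bar v_i\). The price is needing that vanishing for every \(i\) rather than only \(i=1\), which is standard.

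The trade is worthwhile, because the paper's torsion-freeness claim needs more care than it gets. The negative cone of \(\mathbb M_2\) is \(a_\sigma\)-torsion. For instance, with \(\theta\in\pi_{2\sigma-2}H\mF_2\) the negative-cone generator, the nonzero class \(\theta\,\bar\tau_1\bar\tau_2\bar\tau_3\bar\tau_4\bar\xi_2\) has degree \(31\rho_2=|\bar\tau_5|-1\) and positive filtration. Your argument never meets this issue.

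Two smaller points. Your backup counting argument via Corollary~\ref{cor: Hu--Kriz Result} is unnecessary and best dropped, since that corollary's collapse is not independently established. Also, the \(\bar\xi_i\) are permanent cycles by naturality of the filtration along \(H\mF_2\otimes\BPR\to H\mF_2\otimes H\mZ\), not because that map is an edge map.
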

\begin{proof}
    The classes \(\bar{\xi}_i\) all come from the homotopy of \(H\mF_2\otimes\BPR\), and hence are necessarily permanent cycles. Since we are computing Bredon homology, everything in \(\mathbb M_2\) is also a permanent cycle. The only classes which might not be permanent cycles are the classes \(\bar{\tau}_i\) for \(i\geq 1\), so we must show only that these all survive the spectral sequence. We do this by induction on \(i\).
    
    The base case, \(i=1\), is handled by noting that the Hurewicz image of \(\bar{v}_1\) is zero. For degree reasons, the only possible target of the differential on \(\bar{\tau}_1\) is a multiple of \(\bar{\xi_1}\), but we deduce this must be zero.
    
    Assume by induction that all \(\bar{\tau}_i\) for \(i<k\) are permanent cycles. This means that \(\bar{\tau}_{k-1}^2=a_\sigma\bar{\tau}_{k}\) is a permanent cycle. However, up until this point, the spectral sequence is a sum of free \(\mathbb M_2\)-modules and the possible targets of any differential on \(\bar{\tau}_k\) are \(a_\sigma\)-torsion free, we deduce that \(\bar{\tau}_k\) must also be a permanent cycle. 
    
    Since \(E_2=E_\infty\) is a free module over \(\mathbb M_2\), there are no additive extensions.
\end{proof}

As Hu--Kriz show, there are non-trivial multiplicative extensions.

\bibliographystyle{plain}

\bibliography{RelHZ}

\end{document}